\newtheorem{cor}{Corollary}
\newtheorem{defin}{Definition}
\newtheorem{lem}{Lemma}
\newtheorem{thm}{Theorem}
\newtheorem{prob}{Problem}
\newtheorem{prop}{Proposition}
\newtheorem*{sub}{Sublemma}
\newtheorem{claim}{Claim}
\newcommand{\Lo}{\mathbb{L}^{\omega_1}}
\newcommand{\Ldd}{\mathbb{L}^{\omega_1}_{(D,d_0)}}
\newcommand{\ldim}{\operatorname{L-dim}}
\newcommand{\idim}{\operatorname{I-dim}}
\newcommand{\col}{\operatorname{Col}}
\title{On countably saturated linear orders and certain class of  countably saturated graphs}
\author{Ziemowit Kostana\footnote{Research of Z. Kostana was supported by the GAČR project 16-34860L and RVO: 67985840.}\\
University of Warsaw, \\The Czech Academy of Sciences, Institute of Mathematics\\
z.kostana@mimuw.edu.pl}
\begin{document}

\maketitle

\begin{abstract}

	 The idea of this paper is to explore the existence of canonical countably saturated models for different classes of structures.\\
	  It is well-known that, under CH, there exists a unique countably saturated linear order of cardinality $\mathfrak{c}$. We provide some examples of pairwise non-isomorphic countably saturated linear orders of cardinality $\mathfrak{c}$, under different set-theoretic assumptions. We give a new proof of the old theorem of Harzheim, that the class of countably saturated linear orders has a uniquely determined one-element basis. From our proof it follows that this minimal linear order is a Fra\"iss\'e limit of certain Fra\"iss\'e class. In particular, it is homogeneous with respect to countable subsets.\\
	Next we prove the existence and uniqueness of the uncountable version of the random graph. This graph is isomorphic to $(H(\omega_1),\in \cup \ni)$, where $H(\omega_1)$ is the set of hereditarily countable sets, and two sets are connected if one of them is an element of the other.\\
	In the last section, an example of a prime countably saturated Boolean algebra is presented.

\end{abstract}

{\bf Keywords:} linear order, countably saturated, homogeneous object, random graph \\
{\bf MSC classification:} 03C55, 06A05, 05C63

\section{Introduction}

It is a general theorem of model theory, that for any first-order theory which has infinite models, all countably saturated models of cardinality $\mathfrak{c}$ are isomorphic, as long as we assume CH. The definition of countable saturation and the proof of this theorem can be found in any textbook on model theory, for example \cite{hod}.
However, we will not bother with realization of types in general, and focus on some particular cases. These cases are:

\begin{itemize}
	\item (Hausdorff) Assume CH. Then all countably saturated linear orders (see next section for the definition) of cardinality $\mathfrak{c}$ are isomorphic.
	\item  Assume CH. There exists a unique up to isomorphism graph $G$ of cardinality $\mathfrak{c}$, with the property that for any countable, disjoint subsets $A, B \subseteq G$, there exists a vertex in $G\setminus (A \cup B)$ connected to every vertex in $A$ and none of the vertices in $B$.
	\item (Parovi\v{c}enko) Assume CH. Then all Boolean algebras of cardinality $\mathfrak{c}$ with the strong countable separation property are isomorphic. (for the definition see \cite{bool} or Section 3 of this article).
	
\end{itemize}

All of the above claims are proved using so-called back-and-forth argument, and all of them fail without CH. The fact that Parovi\v{c}enko Theorem is equivalent to CH was proved by van Douwen and van Mill in \cite{van}. The fact that each of the other two claims is equivalent to CH is much easier to see.  However, it turns out that some applications of back-and-forth argument can be carried out only assuming that $\mathfrak{c}$ is regular, or even in ZFC alone. For example, Avil\'es and Brech generalized Parovi\v{c}enko Theorem, by introducing a property stronger than the countable separation property, which guarantees the uniqueness of certain Boolean algebra with this property, when $\mathfrak{c}$ is regular \cite{ab}. A different generalization was obtained by Dow and Hart \cite{cohen}. Majority of structures obtained this way are examples of uncountable Fra\"iss\'e limits, which general theory is described in \cite{fraisse}. The aim of this paper is to study possible generalizations of the above results in the absence of CH.
 
\begin{thm}[Harzheim, \cite{harz}]
	There exists a unique up to isomorphism linear order of cardinality $\mathfrak{c}$, which is countably saturated, and embeds into any countably saturated linear order. 
\end{thm}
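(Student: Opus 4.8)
\emph{Plan.} I read countable saturation as the statement that for every pair of countable sets $A<B$ in the order there is a point $x$ with $A<x<B$ (allowing $A$ or $B$ empty), so that, writing $\mathcal{K}$ for the class of countable linear orders with order-embeddings, a linear order is countably saturated precisely when it is $\mathcal{K}$-injective: every embedding of a countable order $A$ into it extends along any countable one-point, hence any countable, extension $A\subseteq B$. The whole theorem then becomes the assertion that $\mathcal{K}$ has a Fra\"iss\'e limit in a mixed sense --- injective for countable substructures but of size $\mathfrak{c}$ --- and I would organise the proof as: check that $\mathcal{K}$ is a Fra\"iss\'e class, build the limit $U$ by a length-$\mathfrak{c}$ construction, deduce minimality, and finally deduce uniqueness from homogeneity.

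\emph{Existence.} The class $\mathcal{K}$ is hereditary, has the joint embedding property, and, crucially, the amalgamation property: given embeddings of a countable order $A$ into countable orders $B$ and $C$, one amalgamates by comparing elements of $B\setminus A$ and $C\setminus A$ through the cut each induces on $A$ and breaking ties arbitrarily. I would then build $U=\bigcup_{\alpha<\mathfrak{c}}U_\alpha$ as a continuous increasing chain, at successor stages adding a single point to realise a book-kept countable cut $(A_0,B_0)$ of $U_\alpha$, and --- this is the point I would be careful about --- always inserting the new point $p$ into a \emph{gap}, so that $A_0$ is cofinal below $p$ and $B_0$ is coinitial above $p$ \emph{in} $U_\alpha$. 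Since $\mathfrak{c}^{\aleph_0}=\mathfrak{c}$ and $\operatorname{cf}(\mathfrak{c})>\omega$, a book-keeping of length $\mathfrak{c}$ meets every countable cut after its two sides have appeared, so $U$ is countably saturated; and $|U|=\mathfrak{c}$, the lower bound $\ge\mathfrak{c}$ coming from the standard binary tree of nested intervals whose $\mathfrak{c}$ branches determine distinct countable cuts, each realised by a point of $U$. The invariant I maintain throughout is that \emph{every} point $p_\alpha$ has a countably generated cut over the set $U_\alpha$ of its predecessors.

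\emph{Minimality.} Let $L$ be any countably saturated order. I would build an embedding $U\to L$ by recursion on the construction stage, i.e. processing the points of $U$ in the order in which they were added. Having embedded $U_\alpha$ by an order map $f$, I place $f(p_\alpha)$ using countable saturation of $L$ to find $q$ with $f(A_0)<q<f(B_0)$; because $A_0$ is cofinal below $p_\alpha$ and $B_0$ coinitial above it \emph{in $U_\alpha$}, the point $q$ automatically lands on the correct side of $f(x)$ for every $x\in U_\alpha$, so $f\cup\{(p_\alpha,q)\}$ is again order preserving, and unions at limit stages preserve this. The reason the argument succeeds even when $\mathfrak{c}>\aleph_1$, with only countable saturation of $L$ available, is exactly the invariant from the construction: each point's cut over its \emph{predecessors} is countably supported, although its cut over all of $U$ need not be.

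\emph{Uniqueness, and the main obstacle.} An internal back-and-forth of the same kind shows $U$ is homogeneous with respect to countable subsets, which is the engine for uniqueness. Given another order $M$ with the stated property, $U$ and $M$ embed into one another, but since Cantor--Schr\"oder--Bernstein fails for linear orders this does not yet give an isomorphism; one must run a two-sided back-and-forth. The forth direction, exhausting $U$ in its construction order, is unproblematic and uses only countable saturation of $M$, exactly as in the minimality argument. The genuine difficulty --- the step I expect to be the crux --- is the back direction: to exhaust $M$ one must add its points with cuts that are countably supported over the current domain, i.e. one needs a countably generated filtration of $M$ as well. I would obtain this from the minimality of $M$, fixing an embedding $M\to U$ and using the genericity of $U$ to arrange that the trace of $U$'s filtration on the image is again countably generated, so that both directions stay within reach of countable saturation. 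Establishing that an \emph{arbitrary} minimal countably saturated order carries such a filtration is the heart of the matter; granting it, interleaving the two directions produces the isomorphism $U\cong M$.
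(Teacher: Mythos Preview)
Your outline is on target up to and including the sentence where you name the crux; the gap is precisely the step you wave at with ``using the genericity of $U$ to arrange that the trace of $U$'s filtration on the image is again countably generated.'' This does not work as stated. Your invariant is that each $p_\alpha$ has a \emph{countably generated cut} over $U_\alpha$. But countable cofinality/coinitiality is not inherited by subsets: if $M\hookrightarrow U$ and $m\mapsto p_\alpha$, then $p_\alpha$ sits in a countably generated cut over $U_\alpha$, yet the cut of $m$ over $M\cap U_\alpha$ can be uncountably generated (think of $\omega_1\subset\omega_1+\omega$: the larger set has countable cofinality, the subset does not). So tracing your length-$\mathfrak{c}$ filtration back to $M$ gives you nothing usable for the back direction. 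There is also a secondary mismatch: your pieces $U_\alpha$ may have size $<\mathfrak{c}$ but are not countable, so the partial isomorphisms you need to extend are between large sets, and ``countable saturation'' alone does not tell you where to send a single new point whose cut over such a set is uncountable.

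The paper resolves exactly this by changing the yardstick. Instead of ``countably generated cut over predecessors,'' it measures levels by \emph{$I$-dimension}: $L_\alpha$ embeds into $I^\gamma$ for some countable $\gamma$. This notion is monotone under subsets (so the trace of the canonical filtration of $\Lo=\bigcup_{\alpha<\omega_1}[-1,1]^\alpha$ on any subset still has countable $I$-dimension), and it is closed under finite unions (Proposition~4), which lets one build a filtration of an arbitrary prime $M$ from the trace plus patching sets. The filtration has length $\omega_1$, not $\mathfrak{c}$, and the back-and-forth is run level-by-level: the \emph{Lemma on Bounded Injectivity} says that an embedding of a set of countable $I$-dimension into a countably saturated order extends along any superset of countable $I$-dimension, and moreover the extension can be kept inside a bounded level of the target's filtration (this is the content of the Sublemma, which finds copies of $2^\gamma$ inside bounded levels). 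With both filtrations in hand, the back-and-forth is a routine $\omega_1$-length zig-zag. So the missing idea is to replace your one-point/countable-cut bookkeeping by a coarser filtration whose levels are measured by embeddability into $I^\gamma$; that notion survives passage to subsets and unions, and that is what carries the uniqueness argument.
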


\begin{thm}
	There exists a unique up to isomorphism graph $G$ of cardinality $\mathfrak{c}$, such that
	\begin{itemize}
		\item  For any disjoint sets $A,B \subseteq G$, $|A|\le \omega$, $|B|<\mathfrak{c}$, there exist a vertex $g\in G\setminus(A\cup B)$, such that $g$ is connected to every element of $A$ and none of the elements in $B$,
		\item $G$ embeds into any graph with the above property.
	\end{itemize}
\end{thm}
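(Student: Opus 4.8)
The plan is to prove existence by exhibiting the concrete model $M=(H(\omega_1),\in\cup\ni)$ announced in the abstract and then to read uniqueness off its minimality. First I would recall that $|H(\omega_1)|=\mathfrak{c}$. To verify the first bullet, fix disjoint sets $A,B$ with $|A|\le\omega$ and $|B|<\mathfrak{c}$, and search for the witness in the form $g=A\cup\{r\}$ with $r$ ranging over hereditarily countable sets. Such a $g$ is again hereditarily countable, and automatically $a\in g$ for every $a\in A$, so $g$ is joined to all of $A$. The requirements $b\notin g$ and $g\notin b$ for $b\in B$ amount to demanding $r\notin B$ and $A\cup\{r\}\notin\bigcup B$; since $|\bigcup B|\le\max(|B|,\omega)<\mathfrak{c}$, this forbids fewer than $\mathfrak{c}$ values of $r$, whereas there are $\mathfrak{c}$ admissible ones. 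Choosing $r$ outside this small set, and so that $A\cup\{r\}\notin A\cup B$, produces the required vertex $g\in M\setminus(A\cup B)$.

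Next I would establish the second bullet, namely that $M$ embeds into every graph $H$ with the extension property; this is the crucial structural point. The idea is to enumerate $M=\{x_\alpha:\alpha<\mathfrak{c}\}$ so that the enumeration respects $\in$-rank, whence every element of a set precedes that set. With such an enumeration the only earlier neighbours of $x_\alpha$ are its elements, because a containment $x_\alpha\in x_\beta$ forces $\mathrm{rank}(x_\alpha)<\mathrm{rank}(x_\beta)$ and hence $\alpha<\beta$; consequently the set of neighbours of $x_\alpha$ among $\{x_\beta:\beta<\alpha\}$ is countable. I then build an embedding $f$ by transfinite recursion: at stage $\alpha$ set $A_\alpha=f[\{x_\beta:\beta<\alpha,\ x_\beta\in x_\alpha\}]$, which is countable, and $B_\alpha=f[\{x_\beta:\beta<\alpha\}]\setminus A_\alpha$, which has size $<\mathfrak{c}$; the extension property of $H$ yields a fresh vertex joined to all of $A_\alpha$ and to none of $B_\alpha$, and I put $f(x_\alpha)$ equal to it. Injectivity and preservation of both edges and non-edges are immediate, so $f$ is an embedding. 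Since this uses only that $\alpha<\mathfrak{c}$ implies $|\alpha|<\mathfrak{c}$, it goes through in ZFC, and $M$ satisfies both bullets.

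For uniqueness, let $G$ be any graph of cardinality $\mathfrak{c}$ satisfying both bullets. Minimality of $G$ together with the extension property of $M$ gives an induced embedding $e:G\hookrightarrow M$; pulling back the rank enumeration of $M$ through $e$ equips $G$ with a well-ordering in which each vertex again has only countably many earlier neighbours (a \emph{good enumeration}), because $e$ preserves adjacency and non-adjacency. Symmetrically $M\hookrightarrow G$, and it remains to upgrade these mutual embeddings to an isomorphism $M\cong G$. The hard part is exactly the asymmetry of the extension property: it realises only countably many \emph{positive} requirements, so the naive vertex-by-vertex back-and-forth breaks down, since at a limit stage of uncountable cofinality the constructed domain becomes uncountable and a newly treated vertex may have to be joined to uncountably many already-placed vertices — something the extension property cannot supply (in $M$ every vertex already has $\mathfrak{c}$ neighbours). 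The good enumerations are precisely what circumvent this: along them one extends only over countable, neighbour-closed pieces, so each single step realises a countable positive type over a countable base, where the extension property applies freely, and the isomorphism is assembled as the colimit of a commuting ladder of embeddings between these countable pieces rather than as an increasing union of partial isomorphisms. In other words both $M$ and $G$ are exhibited as the Fra\"iss\'e limit of the class of countable graphs with induced embeddings, and the isomorphism follows from the uniqueness of uncountable Fra\"iss\'e limits of \cite{fraisse}; as in the linear-order case, this also shows that the limit is homogeneous with respect to its countable subgraphs.
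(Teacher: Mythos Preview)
Your existence argument via $M=(H(\omega_1),\in\cup\ni)$ is correct and coincides with the paper's Theorem~18, and your rank-enumeration proof that $M$ embeds into every $(\omega_1,\mathfrak{c})$-saturated graph is fine. The observation that any prime $G$ acquires, by pulling back through an embedding $e:G\hookrightarrow M$, a well-ordering in which each vertex has only countably many earlier neighbours is also correct; in the paper's language this is the implication ``prime $\Rightarrow$ $\col(G)\le\omega_1$'' of Theorem~16.

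The gap is the final step. A good enumeration controls the \emph{earlier} neighbours of a vertex, but in a back-and-forth the partial domain is not an initial segment of that enumeration: once a back step has placed some $h\in M$ at a vertex $g\in G$, a later forth step may have to handle a vertex with uncountably many already-placed later neighbours, and the extension property cannot realise that type. Your proposed fix, working along ``countable neighbour-closed pieces'' and invoking the uniqueness of Fra\"iss\'e limits of countable graphs, does not go through: under $\neg$CH the graph has size $\mathfrak{c}>\omega_1$, so it is not an $\omega_1$-colimit of countable graphs at all, and along a chain of length $\mathfrak{c}$ the stages of uncountable cofinality are themselves uncountable. The sentence ``both $M$ and $G$ are exhibited as the Fra\"iss\'e limit of the class of countable graphs with induced embeddings'' is simply false without CH; that limit is the countable random graph.

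The paper closes this gap by a genuinely different and more laborious route. It passes to \emph{directed} graphs and isolates the notion ``strictly saturated with no infinite directed path'', for which a Mostowski-collapse style back-and-forth succeeds because one can always pick a vertex whose \emph{exact} out-neighbourhood equals a prescribed countable set (Lemma~7). The real work is Lemma~8: starting from an $(\omega_1,\mathfrak{c})$-saturated graph with a good enumeration, one carefully redirects edges, maintaining six inductive conditions, so as to produce a strictly saturated orientation with no infinite path. Your pull-back argument supplies precisely the hypothesis of Lemma~8, but you still need Lemma~8 (or some substitute of comparable strength) to conclude.
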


\begin{thm}
	There exists a unique up to isomorphism directed graph $\overleftarrow{G}$ of cardinality $\mathfrak{c}$, such that
	\begin{itemize}
		\item  For any countable set $A \subseteq \overleftarrow{G}$ there exist $\mathfrak{c}$-many $g\in \overleftarrow{G}$, such that 
		$A=\{a\in \overleftarrow{G}|\, (g,a)\in E(\overleftarrow{G})\}$,
		\item For any $g \in G$, the set $A=\{a\in \overleftarrow{G}|\, (g,a)\in E(\overleftarrow{G})\}$ is countable,
		\item $\overleftarrow{G}$ embeds into any graph with the above properties.
	\end{itemize}
\end{thm}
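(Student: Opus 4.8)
The plan is to produce a single \emph{canonical} directed graph, verify the two closure properties together with minimality, and then get uniqueness essentially for free by exploiting a structural feature that the defining conditions secretly force: well-foundedness of the out-edge relation. Throughout write $N^{+}(v)=\{a\mid (v,a)\in E\}$ and $N^{-}(v)=\{a\mid (a,v)\in E\}$, so that the second bullet says every $N^{+}(v)$ is countable and the first says every countable $A$ satisfies $|\{v\mid N^{+}(v)=A\}|=\mathfrak{c}$. For \textbf{existence}, I would build $\overleftarrow{G}$ on the ordinal $\mathfrak{c}$ by prescribing, for each $\xi<\mathfrak{c}$, a countable set $N^{+}(x_{\xi})\subseteq\{x_{\eta}\mid \eta<\xi\}$, i.e.\ letting every edge point to an \emph{earlier} vertex. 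Because $\operatorname{cf}(\mathfrak{c})>\omega$ (König) every countable subset of $\mathfrak{c}$ is bounded, and because $|[\mathfrak{c}]^{\le\omega}|=\mathfrak{c}$ a routine bookkeeping assigns to each countable $A$ (with bound $\zeta_A$) a set of $\mathfrak{c}$ indices $\xi>\zeta_A$ with $N^{+}(x_{\xi})=A$. This yields the two closure properties and $|\overleftarrow{G}|=\mathfrak{c}$; crucially, since edges only descend in the index order, the relation is \emph{well-founded}, which is the extra property the construction is designed to supply.

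For \textbf{minimality} I would embed $\overleftarrow{G}$ into an arbitrary graph $H$ satisfying the two bullets by a forth-only recursion along a well-order of $\overleftarrow{G}$ in which $N^{+}(x_{\xi})\subseteq\{x_{\eta}\mid\eta<\xi\}$ (a rank order, available by well-foundedness). Having embedded $\{x_{\eta}\mid\eta<\xi\}$ by $e$, to place $x_{\xi}$ note that \emph{no already-placed vertex points to $x_{\xi}$}, so the only requirements on $y=e(x_{\xi})$ are that its out-edges to the range match: I take $C=e[N^{+}(x_{\xi})]$, use the first property of $H$ to get $\mathfrak{c}$ candidates $y$ with $N^{+}_{H}(y)=C$, and then use the second property of $H$ together with $\operatorname{cf}(\mathfrak{c})>\omega$ to see that the forbidden set $\bigcup_{\eta<\xi}N^{+}_{H}(e(x_{\eta}))\cup e[\{x_{\eta}\mid\eta<\xi\}]$ has size $<\mathfrak{c}$; any surviving candidate gives a valid injective step. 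Thus $\overleftarrow{G}$ is an induced subgraph of every $H$ with the two properties, so it is minimal.

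For \textbf{uniqueness} I would first observe that any graph $\overleftarrow{G}'$ satisfying all three conditions embeds, by its own minimality, into our $\overleftarrow{G}$; since an induced subgraph of a well-founded digraph is well-founded, $\overleftarrow{G}'$ is well-founded. It therefore suffices to prove that \emph{any two well-founded directed graphs of size $\mathfrak{c}$ satisfying the two closure properties are isomorphic}, which I would do by induction on the rank function $\operatorname{rk}(v)=\sup\{\operatorname{rk}(w)+1\mid w\in N^{+}(v)\}$. Every edge strictly drops rank, so same-rank vertices are non-adjacent and all predecessors of a vertex have higher rank. Match the $\mathfrak{c}$ vertices of rank $0$ (those with $N^{+}=\varnothing$, counted by the first property) bijectively; at stage $\alpha$, given an isomorphism $\phi$ on the vertices of rank $<\alpha$, extend it by pairing, for each countable $A$ in the lower part with $\sup\{\operatorname{rk}(a)+1\mid a\in A\}=\alpha$, the $\mathfrak{c}$ vertices with $N^{+}=A$ in one graph against the $\mathfrak{c}$ vertices with $N^{+}=\phi[A]$ in the other (both cardinalities equal $\mathfrak{c}$ by the first property and $|{\cdot}|=\mathfrak{c}$). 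Because the only new edges run from rank $\alpha$ down to already-matched vertices and are governed exactly by out-neighborhoods, $\phi$ remains an isomorphism, and the union over all ranks is the desired iso. Hence the minimal object is unique and coincides with the constructed $\overleftarrow{G}$.

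The \textbf{main obstacle} is conceptual rather than computational: the defining properties constrain only out-neighborhoods and say nothing about predecessors, so a direct back-and-forth stalls on the positive in-edge constraints, namely finding a vertex pointed to by a prescribed infinite set of already-placed vertices — something neither property guarantees. The resolution is the observation that minimality \emph{forces} well-foundedness (via the embedding into the well-founded $\overleftarrow{G}$), after which ordering by rank makes every edge point downward; this defers all predecessor constraints to later stages, so they never bind, and reduces both the embedding and the isomorphism to out-neighborhood bookkeeping driven by the first property. The points I would treat with care are the existence bookkeeping (serving each countable set $\mathfrak{c}$ times above its bound) and the step ``minimal $\Rightarrow$ embeds into $\overleftarrow{G}$ $\Rightarrow$ well-founded,'' while the rank induction is the heart of the argument.
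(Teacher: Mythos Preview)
Your argument is correct and structurally parallel to the paper's: both build the canonical graph on $\mathfrak{c}$ by bookkeeping with edges pointing to earlier indices, both do the forth-only embedding, and both derive well-foundedness of any prime object from its embeddability into the constructed one. The genuine difference is in the isomorphism step. The paper proves (its Lemma~7) that any two strictly saturated graphs of size $\mathfrak{c}$ without infinite paths are isomorphic by a Mostowski-style back-and-forth: at even stages it picks the least-index $g$ in $\overleftarrow{G}$ whose out-neighbourhood is already handled, finds $h\in\overleftarrow{H}$ with $N_z(h)=\phi[N_z(g)]$, and sets $\phi(g)=h$; odd stages do the symmetric move. Termination uses that an unmatched vertex would yield an infinite path. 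Your route instead defines the rank function and, at stage $\alpha$, bijects the $\mathfrak{c}$ many vertices with $N^{+}=A$ on one side onto the $\mathfrak{c}$ many vertices with $N^{+}=\phi[A]$ on the other, for each countable $A$ of rank-supremum $\alpha$. This is a cleaner global packaging of the same idea---the rank stratification does in one shot what the paper's ``least index with $N_z$ already defined'' does vertex by vertex---and it makes transparent why no in-edge constraints ever arise. The paper's back-and-forth, on the other hand, avoids introducing the rank function and checks bijectivity via the infinite-path argument; neither approach needs anything the other does not.
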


 The author would like to thank Wies\l{}aw Kubi\'s for inspiring conversations and asking many good questions which laid foundation for this work, and the anonymous referee for pointing out numerous mistakes in the first version of the paper. Also some credit goes to Arturo Martinez Celis, who came up with the idea of representation of the graph from Theorem 2.

\section{Countably saturated linear orders}

In this section we give the definition and introduce some examples of countably saturated linear orders. Next, we introduce the notion of linear dimension,
which we use to characterize those countably saturated linear orders, which are embeddable into any countably saturated linear order. Finally,
we prove that the linear order with this property is unique up to isomorphism.

We use the distinction \textit{non-decreasing/increasing} rather than \textit{increasing/strictly increasing}. For linear orders $(K,\le_K)$, $(L,\le_L)$, we define $K+L$ as $\{0\}\times K \cup \{1\}\times L$, with the lexicographic order. For $A,B \subseteq L$, by $A<B$, we denote that $\forall_{x\in A}\forall_{y\in B} x<y$. Later, whenever a product of linear orders is considered, we equip it with lexicographic order without mentioning it. By symbol $K \hookrightarrow L$ we denote that there is an increasing function from $K$ to $L$. By $L^*$ we denote $L$ with the reversed order, i.e. $(L,\le)^*=(L,\ge)$.\\

A linear order is \emph{compact}, if it is compact in the order topology, i.e. every set has both supremum and infimum, in particular both endpoints exist. We often call such orders \emph{compact lines}.\\

A linear order is a linearly ordered continuum, if it is compact and connected in the order topology, i.e. it is compact and dense as a linear order. \\

A linear order $L$ has character at most $\kappa$, if for each $l \in L$, there exists subsets $A,B \subset L$ of cardinality at most $\kappa$, such that $A<B$,\\ and $\{x\in L|\; A<\{x\}<B \}=\{l\}$. As to be expected, $L$ has character $\kappa$, if $\kappa$ is the least cardinal, such that $L$ has character at most $\kappa$. We will make use of the following observation a few times. The proof is left for the Reader.

\begin{prop}
	If a linear order $L$ has uncountable character, then $\omega_1 \hookrightarrow L$ or $\omega_1^* \hookrightarrow L$.
\end{prop}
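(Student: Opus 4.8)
The plan is to unwind the definition of character and reduce the problem to a statement about the cofinality of the initial segment below a point and the coinitiality of the final segment above it, and then to build the required $\omega_1$- or $\omega_1^*$-chain by transfinite recursion.

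First I would analyze when a single point $l \in L$ can be \emph{pinned down} by countable sets, meaning that there exist countable $A,B \subset L$ with $A<B$ and $\{x \in L : A<\{x\}<B\}=\{l\}$. Since $l$ itself must lie in this set, $A<\{l\}<B$ is forced, so $A$ is contained in the initial segment $L_{<l}:=\{x:x<l\}$ and $B$ in the final segment $L_{>l}:=\{x:x>l\}$. The key observation is that the requirement ``$A<\{x\}$ fails for every $x<l$'' says precisely that $A$ is cofinal in $L_{<l}$, and symmetrically ``$\{x\}<B$ fails for every $x>l$'' says that $B$ is coinitial in $L_{>l}$. Hence, if $L_{<l}$ has countable cofinality and $L_{>l}$ has countable coinitiality, I would pick a countable cofinal $A$ and a countable coinitial $B$ and check that no $x \neq l$ survives; the degenerate configurations ($l$ an endpoint, or $l$ having an immediate neighbour) are covered by allowing $A$ or $B$ to be empty or a singleton.

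Passing to the contrapositive, the hypothesis that $L$ has uncountable character produces a point $l$ that cannot be pinned by countable sets, so by the previous paragraph either $L_{<l}$ has uncountable cofinality or $L_{>l}$ has uncountable coinitiality. In the first case I would construct an increasing sequence $\langle a_\xi : \xi<\omega_1\rangle$ in $L_{<l}$ by transfinite recursion: at each stage $\xi$ the set $\{a_\eta : \eta<\xi\}$ is countable, hence not cofinal in $L_{<l}$ (its cofinality being uncountable), so some $a_\xi \in L_{<l}$ lies strictly above all earlier terms. This yields $\omega_1 \hookrightarrow L_{<l} \hookrightarrow L$. The second case is symmetric: a decreasing $\omega_1$-sequence in $L_{>l}$ gives $\omega_1^* \hookrightarrow L$.

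The main obstacle is the bookkeeping in the reduction step: correctly translating the single-point condition into the cofinality/coinitiality statements and handling the degenerate cases cleanly, where one must decide whether $A$ or $B$ should be empty, a singleton, or a genuine countable sequence. Once that equivalence is secured, the transfinite recursion producing the monotone $\omega_1$-chain is routine.
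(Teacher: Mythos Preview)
Your argument is correct. The paper itself does not give a proof of this proposition---it explicitly leaves it to the reader---so there is nothing to compare against; your reduction of the pinning condition on $l$ to the cofinality of $L_{<l}$ and the coinitiality of $L_{>l}$, followed by a straightforward transfinite recursion, is exactly the kind of routine verification the author had in mind.
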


\subsection{Basic examples}

\begin{defin}
	We say that a linear order $(L,\le)$ is countably saturated, if for any countable linear orders $a$, $b$, and increasing functions $i:a\rightarrow b$, $f:a\rightarrow L$, there exists $\tilde{f}:b\rightarrow L$, such that
	$\tilde{f}\circ i=f$.
	
	\begin{tikzcd}
	a \arrow{r}{f} \arrow{d}[swap]{i} & L\\
	b \arrow[dashed]{ur}[swap]{\tilde{f}}
	\end{tikzcd}
	\\
	Moreover, $L$ is prime countably saturated, if any countably saturated linear order contains an isomorphic copy of $L$.
\end{defin}

The following simple Lemma provides more operational characterization, and justifies the name "saturated" - it is just saturation in the usual, model-theoretic sense.
\begin{lem} A linear order is countably saturated if and only if
	\begin{itemize}
		\item it is dense, without the least and the greatest element,
		\item no countable increasing sequence has supremum,
		\item no countable decreasing sequence has infimum,
		\item there are no $(\omega,\omega)$-gaps: for any two sequences $\{x_n\}_{n<\omega}$, $\{y_n\}_{n<\omega}$ such that $\forall{n<\omega}\;{x_n<x_{n+1}<y_{n+1}<y_n}$, exists $z$ s.t. $\forall{n<\omega}\;{x_n<z<y_n}.$
			
	\end{itemize}
\end{lem}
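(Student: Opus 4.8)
The plan is to prove both implications. The forward direction is a routine unwinding of the lifting property against a handful of explicit diagrams, while the converse is a one-sided back-and-forth whose engine is a single \emph{cut-filling} claim, and that claim is where essentially all the content sits.

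For the direction \emph{saturated} $\Rightarrow$ the four bullets, I would feed the definition small, explicit inclusions $a\hookrightarrow b$. Density and the absence of endpoints come from $a=\{0<1\}\hookrightarrow b=\{0<\tfrac12<1\}$ and $a=\{0\}\hookrightarrow b=\{-1<0\}$ (and its mirror): extending $f$ forces a point strictly between two prescribed points, resp.\ strictly below or above a prescribed point. For ``no supremum'', given an increasing $\{x_n\}_{n<\omega}$ with an upper bound $s$, I take $a$ of type $\omega+1$ with $f(n)=x_n$ and $f(\infty)=s$, and let $b$ of type $\omega+1+1$ be obtained by inserting one point $*$ between the $\omega$-block and $\infty$; the extension places $\tilde f(*)=t$ with $x_n<t<s$ for all $n$, so $s$ is not the least upper bound, and since $s$ was an arbitrary upper bound no least upper bound exists. ``No infimum'' is the mirror image (types $1+\omega^*$ and $1+1+\omega^*$), and ``no $(\omega,\omega)$-gap'' comes from $a$ of type $\omega+\omega^*$ sent to the given $x_n$'s and $y_n$'s together with $b$ of type $\omega+1+\omega^*$ carrying one inserted point.

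For the converse I may assume $i$ is the inclusion $a\subseteq b$ (identify $a$ with $i[a]$) and must extend an increasing $f:a\to L$ to $b$. I enumerate $b\setminus a=\{\beta_k\}_{k<\omega}$ and build $\tilde f$ by recursion, at stage $k$ placing $\beta_k$ into the gap it determines among the countably many already-placed values: with $g$ the increasing map built so far on a countable domain $D_k\supseteq a$, set $P=g[\{d\in D_k: d<\beta_k\}]$ and $Q=g[\{d\in D_k: d>\beta_k\}]$, so that $P,Q$ are countable with $P<Q$, and declare $g(\beta_k)$ to be any $z$ with $P<z<Q$. The union of the stages is the required $\tilde f$, so everything reduces to the claim: for countable $P,Q\subseteq L$ with $P<Q$ there is $z\in L$ with $P<z<Q$.

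To prove the claim I replace $P$ by its maximum if it has one and otherwise by a cofinal increasing $\omega$-sequence, and dually replace $Q$ by its minimum or a coinitial decreasing $\omega$-sequence; this leaves the set of admissible $z$ unchanged. If both reduced sides are single points $p<q$, density supplies $z$. If one side is a point and the other a monotone sequence, I use ``no infimum'' (a point $p$ lying strictly below a decreasing $\{b_n\}$ is a lower bound, so the absence of a greatest lower bound yields a lower bound $z>p$, necessarily with $p<z<b_n$ for all $n$) and symmetrically ``no supremum''. If both sides are sequences, the interleaving $a_n<a_{n+1}<b_{n+1}<b_n$ is exactly an $(\omega,\omega)$-configuration and the no-gap bullet supplies $z$. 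The step I expect to be the genuine obstacle is the \emph{one-sided} case $P=\emptyset$ or $Q=\emptyset$: filling such a cut means producing a lower (resp.\ upper) bound for an arbitrary countable set, i.e.\ knowing that $L$ has uncountable coinitiality and cofinality. When the empty side faces a single point this is immediate from the absence of endpoints, but when it faces a cofinal $\omega$-sequence one genuinely needs that no countable set is cofinal or coinitial. This property is forced by saturation (extend $\omega\hookrightarrow\omega+1$), so for the equivalence to close it must be read into the four bullets as the ``$(\omega,0)$-'' and ``$(0,\omega)$-gap'' conditions; securing it, and keeping the monotone-sequence bookkeeping clean, is the crux of the proof.
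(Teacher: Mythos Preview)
Your approach is exactly the paper's: reduce the converse to realising one-point extensions (your ``cut-filling claim'') and verify the forward direction by feeding the definition small explicit diagrams. The paper compresses the converse into one sentence (``conditions from the Lemma assure exactly, that any one-point extension of $a$ can be realised in $L$'') and calls the forward direction ``straightforward''; you have simply unpacked both, so the strategies coincide.

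The genuine content you add is in your last paragraph, and you are right to flag it rather than sweep it aside. The four bullets as literally written do \emph{not} cover the $(\omega,0)$ and $(0,\omega)$ cuts, i.e.\ they do not force uncountable cofinality and coinitiality, and the paper glosses over this. Concretely, take $L=\omega^*\times\mathbb{L}^{\omega_1}$, an $\omega^*$-indexed lexicographic sum of copies of the prime countably saturated order. This $L$ is dense without endpoints and has no $(\omega,\omega)$-gaps; every increasing $\omega$-sequence is eventually trapped in a single copy and so has no supremum; and a decreasing $\omega$-sequence is either eventually in one copy (no infimum there) or runs through infinitely many copies, in which case it has no lower bound at all and hence, vacuously, no infimum. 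Yet the sequence $\bigl((n,0)\bigr)_{n<\omega}$ is coinitial in $L$, so the one-point extension $\omega^*\hookrightarrow 1+\omega^*$ cannot be realised and $L$ is not countably saturated. Thus the lemma as stated is not quite correct; your proposed repair---reading ``no increasing $\omega$-sequence is cofinal'' and its dual into the second and third bullets---is precisely what is needed, and with that reading your case analysis goes through.
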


\begin{proof}
	Suppose, that $L$ satisfies all of the above, $a$ and $b$ are countable, and $f:a \rightarrow L$ is an embedding, which we want to extend to $\tilde{f}:b\rightarrow L$. Notice, that it is sufficient to show this in case, when $b$ is a one-point extension of $a$, since then we can do so for any countable linear order by induction. But conditions from the Lemma assure exactly, that any one-point extension of $a$ can be realized in $L$.
	\\ In the other direction, verification is straightforward.
\end{proof}

Linear orders with this property were investigated already by Hausdorff \cite{Hausdorff}, under the name $\eta_1$-orders, and later by Harzheim \cite{harz}. A brief survey with substantial bibliography can be found in Chapter 9 of the book by Rosenstein \cite{rosen}.

Since each one-point extension of a countable subset of countably saturated linear order is realised, each linear order of cardinality at most $\omega_1$ can be embedded into any countably saturated linear order. Moreover, the following stregthening of this observation is attributed to Hausdorff in \cite{Urysohn}, and \cite{rosen} p.164.

\begin{thm} A countably saturated linear order contains a copy of any linear order, which doesn't have a copy of $\omega_1$ or $\omega_1^*$.
\end{thm}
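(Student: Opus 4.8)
The plan is to construct an order embedding $f\colon M\to L$ by transfinite recursion, adding one point of $M$ at a time (a pure ``forth'' argument). Fix an enumeration $M=\{m_\xi:\xi<\mu\}$ with $\mu=|M|$, and suppose that at stage $\xi$ we have already defined an order embedding on $M_\xi:=\{m_\eta:\eta<\xi\}$. To place $m_\xi$ we must find a point of $L$ sitting in the cut it determines, i.e.\ a $z\in L$ with $f(A)<z<f(B)$, where $A=\{m_\eta\in M_\xi: m_\eta<m_\xi\}$ and $B=\{m_\eta\in M_\xi: m_\eta>m_\xi\}$. At limit stages nothing needs to be done, since a union of an increasing chain of order embeddings is again an order embedding.

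The whole point is to reduce this cut to a \emph{countable} one, and this is exactly where the hypothesis on $M$ enters. I claim that $A$ has countable cofinality and $B$ has countable coinitiality. Indeed, if $\operatorname{cf}(A)\ge\omega_1$, then $A$ carries a strictly increasing cofinal sequence of order type $\operatorname{cf}(A)$, whose first $\omega_1$ terms form a copy of $\omega_1$ inside $A\subseteq M$, contradicting the assumption; hence $\operatorname{cf}(A)\le\omega$ and we may fix a countable $A_0\subseteq A$ cofinal in $A$. Symmetrically, uncountable coinitiality of $B$ would produce a copy of $\omega_1^*$ in $M$, so we may fix a countable $B_0\subseteq B$ coinitial in $B$. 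Because $f$ is order preserving, $f(A_0)$ is cofinal in $f(A)$ and $f(B_0)$ is coinitial in $f(B)$; consequently any $z$ with $f(A_0)<z<f(B_0)$ automatically satisfies $f(A)<z<f(B)$.

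It remains to produce such a $z$, and this is now a purely countable task handled by countable saturation. Apply the extension property (equivalently the conditions of the Lemma) to the countable suborder $c:=A_0\cup B_0\subseteq M$, the map $f\restriction c\colon c\to L$, and the one-point extension $c\cup\{m_\xi\}$ in which $m_\xi$ lies above $A_0$ and below $B_0$: the resulting extension assigns to $m_\xi$ a point $z$ with $f(A_0)<z<f(B_0)$, and we set $f(m_\xi):=z$. The degenerate cases $A=\emptyset$ or $B=\emptyset$ are the same instances of the extension property, using that $L$ has no endpoints and that every countable subset of $L$ is bounded on either side; the latter is itself forced by countable saturation, since realizing the extension $\omega\hookrightarrow\omega+1$ demands an upper bound for each increasing $\omega$-sequence. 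By the previous paragraph $f\restriction M_{\xi+1}$ is again an order embedding, and the recursion goes through.

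The main obstacle to watch is precisely that $M$ may have cardinality above $\omega_1$, so the earlier remark that orders of size $\le\omega_1$ embed into $L$ does not apply directly, and the cut $A\cup B$ confronted at stage $\xi$ can be uncountable. The real content of the argument is the cofinality/coinitiality claim, which shows that although the cut may be large, it has countable \emph{character} from each side, so that countable saturation alone suffices to fill it; everything else is bookkeeping. I would also verify carefully the extraction of a cofinal sequence of order type $\operatorname{cf}(A)$ and the fact that a copy of any $\kappa\ge\omega_1$ contains a copy of $\omega_1$, but these are routine.
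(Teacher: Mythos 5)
Your proof is correct. There is, however, nothing in the paper to compare it against: the paper does not prove this statement at all, but quotes it as a classical result attributed to Hausdorff, citing Urysohn's paper and Rosenstein's book (p.~164). Your transfinite ``forth'' construction is in fact the standard argument for this classical fact, and all its steps are sound: the union of an increasing chain of embeddings is an embedding; every linear order has a well-ordered cofinal subset whose order type is its (regular) cofinality, so uncountable cofinality of the lower cut $A$ would plant a copy of $\omega_1$ inside $M$, and dually for $B$ and $\omega_1^*$; and once the cut $(f(A),f(B))$ is replaced by the countable cut $(f(A_0),f(B_0))$, the one-point extension property of Definition 1 (equivalently, the conditions of Lemma 1) produces the required point $z$, with the degenerate cases $A=\emptyset$ or $B=\emptyset$ covered by the same extension property. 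It is worth noting that your key cofinality/coinitiality claim is precisely Proposition 1 of the paper (uncountable character yields a copy of $\omega_1$ or $\omega_1^*$), localized to the cut determined by $m_\xi$; the paper leaves that proposition to the reader, and your argument effectively supplies its proof as well.
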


In particular, each countably saturated linear order contains an isomorphic copy of the real line order type, therefore has cardinality at least $\mathfrak{c}$. 

Let us now define a linear order, which will turn out to be prime countably saturated.

$$\mathbb{L}^{\omega_1}=\{x\in[-1,1]^{\omega_1}|\,|\{\alpha<\omega_1:\, x(\alpha)\neq 0\}|\le \omega\}.$$ If $D$ is a compact linear order and $d_0\in D$ is neither the least, nor the greatest element of $D$, then we define
$$\mathbb{L}_{(D,d_0)}^{\omega_1}=\{x\in D^{\omega_1}|\,|\{\alpha<\omega_1:\, x(\alpha)\neq d_0\}|\le \omega\}.$$

The case $D=\{-1,0,1\}$ and $d_0=0$ is the classical construction by Hausdorff, and is described in \cite{harz}. Theorem 5 is therefore a slightly more general version of Theorem 3.22 therein, and Theorem 7 - a version of Theorem 3.13.

\begin{thm}
	$\mathbb{L}^{\omega_1}$ and $\mathbb{L}_{(D,d_0)}^{\omega_1}$ are countably saturated.
\end{thm}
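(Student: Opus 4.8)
The plan is to verify the four conditions from the operational characterization in Lemma~2, working with $\Lo$ first and then noting that the argument for $\Ldd$ is identical with $D$ in place of $[-1,1]$ and $d_0$ in place of $0$. Throughout I use the lexicographic order on these products, together with the key structural fact that each element of $\Lo$ has \emph{countable support}, i.e. differs from the constant-$0$ sequence on only countably many coordinates.

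\textbf{Density and endpoints.} Given $x<_{\mathrm{lex}}y$, let $\alpha$ be the least coordinate where they differ, so $x(\alpha)<y(\alpha)$ in $[-1,1]$. Since $[-1,1]$ is a dense order, I can interpolate a value strictly between $x(\alpha)$ and $y(\alpha)$ at coordinate $\alpha$ and agree with $x$ below $\alpha$, producing a point strictly between $x$ and $y$; this gives density. To see there is no least or greatest element, note that given any $x$ I can find a coordinate $\beta$ outside the (countable) support of $x$ and decrease or increase $x(\beta)$ there to produce a strictly smaller or strictly larger element while keeping the support countable.

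\textbf{No suprema of countable increasing sequences and no infima of countable decreasing sequences.} This is the heart of the matter and where I expect the main work to lie. Let $\{x_n\}_{n<\omega}$ be strictly increasing with a purported supremum $s\in\Lo$. The union of the supports of the $x_n$ together with the support of $s$ is a countable subset of $\omega_1$, so I can pick a coordinate $\gamma<\omega_1$ lying above all of them (using that $\omega_1$ has uncountable cofinality); every $x_n$ and $s$ take the value $0$ at $\gamma$ and at every coordinate $\ge\gamma$. The idea is to modify $s$ only at coordinates $\ge\gamma$ to manufacture an element that still lies strictly above every $x_n$ yet strictly below $s$, contradicting that $s$ is the least upper bound. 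Concretely, decreasing $s$ at coordinate $\gamma$ from $0$ to some negative value produces an element $s'<s$ that agrees with $s$ on the initial segment below $\gamma$, hence still dominates every $x_n$ (since each $x_n$ already differs from $s$ at some coordinate $<\gamma$, or is eventually constant there). The infimum statement is the order-dual, handled by the symmetry $L\mapsto L^*$.

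\textbf{No $(\omega,\omega)$-gaps.} Given interleaved sequences $x_n<x_{n+1}<y_{n+1}<y_n$, I again take $\gamma$ above the countable union of all supports of the $x_n$ and $y_n$. The witness $z$ can be built coordinatewise: on the initial segment below $\gamma$ one copies the eventually stabilizing behaviour of the sequences, and at coordinate $\gamma$ one inserts an intermediate value to slot $z$ strictly between the two sides. The delicate point is to check that the constructed $z$ genuinely satisfies $x_n<z<y_n$ for \emph{all} $n$ simultaneously and that its support remains countable; the countable-support constraint is exactly what makes the choice of $\gamma$ possible, so these two concerns are resolved together. Once all four bullet points of Lemma~2 are verified, that lemma immediately yields countable saturation of $\Lo$, and rerunning the identical argument with the compact order $D$ (whose density and endpoint behaviour near $d_0$ play the role of those of $[-1,1]$ near $0$) gives the result for $\Ldd$.
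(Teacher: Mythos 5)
Your handling of the endpoint and supremum/infimum conditions is correct, and in fact avoids machinery the paper uses: lowering a purported upper bound $s$ at a fresh coordinate $\gamma$ beyond all supports really does produce a strictly smaller upper bound, with no appeal to compactness. But the $(\omega,\omega)$-gap condition --- the crux of the theorem --- is not proved. Your plan is to build the witness $z$ coordinatewise by ``copying the eventually stabilizing behaviour of the sequences.'' There is no such stabilizing behaviour: the coordinates of an increasing sequence in a lexicographic power need not stabilize (take $x_n(0)=-1/n$), and the separating element typically involves values that no $x_n$ or $y_n$ ever takes. For instance, in $[-1,1]^2$ the least upper bound of $\{(-1/n,0)\}_{n<\omega}$ is $(0,-1)$: a limit in the first coordinate followed by a ``reset'' to the minimum in the second. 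Producing such an element inside $[-1,1]^\gamma$ for an arbitrary countable ordinal $\gamma$ is exactly where the work lies; the paper gets it from Nov\'ak's theorem (Theorem 6), which says the lexicographic power of a compact line is compact, so that $\sup_n x_n$ exists \emph{inside} $[-1,1]^\gamma$ and automatically lies strictly between all the $x_n$ and all the $y_n$. Your remark that ``the countable-support constraint is exactly what makes the choice of $\gamma$ possible, so these two concerns are resolved together'' only bounds all the data inside one level $[-1,1]^\gamma$; it contributes nothing to constructing $z$ within that level, which is the missing idea.

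A second, smaller error is the transfer to $\Ldd$. You prove density of $\Lo$ by interpolating strictly between $x(\alpha)$ and $y(\alpha)$ at the first coordinate $\alpha$ where $x<y$ differ, and claim the ``identical argument'' works with $D$ in place of $[-1,1]$. But $D$ is only assumed to be a compact line with $d_0$ neither first nor last --- it need not be dense; the paper's motivating example is $D=\{-1,0,1\}$, where no interpolation at coordinate $\alpha$ is possible. Density of $\Ldd$ must instead be argued like your no-endpoints step: given $x<y$, pick $\gamma$ above the support of $x$ and above $\alpha$, and raise $x(\gamma)$ from $d_0$ to some $d_+>d_0$; the resulting point is $>x$ yet still $<y$ because it agrees with $x$ at $\alpha$. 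So the $\Ldd$ case is not a verbatim rerun of your $\Lo$ argument, although it is easily repaired.
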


This follows in fact almost immediately from a theorem of Nov\'ak.

\begin{thm}[Nov\'ak, \cite{novak}] Let $L$ be a compact line. Then for any ordinal $\alpha$, $L^\alpha$ is compact. If $L$ was moreover a continuum, then so is $L^\alpha$.
\end{thm}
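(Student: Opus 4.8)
The plan is to work directly with the paper's order-theoretic definition of compactness: for a linear order, compactness means that every subset has both a supremum and an infimum. So it suffices to establish this completeness for $L^\alpha$ with the lexicographic order (recall $\alpha$ is well-ordered, so "the least coordinate at which two functions differ" always exists, which is what makes the lexicographic order a linear order in the first place). Endpoints are immediate: the constant functions $\gamma \mapsto \min L$ and $\gamma \mapsto \max L$ are the least and greatest elements, and they serve as $\sup\emptyset$ and $\inf\emptyset$. Moreover infima follow from suprema by the order-reversing symmetry (replace $L$ by $L^*$, which is again a compact line). Thus the entire content is the construction of $\sup S$ for an arbitrary nonempty $S \subseteq L^\alpha$.

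First I would build a candidate $s \in L^\alpha$ by transfinite recursion on the coordinate $\beta < \alpha$, "following the branch of partial suprema." Having fixed $s\upharpoonright\beta$, set $A_\beta = \{x \in S : x\upharpoonright\beta = s\upharpoonright\beta\}$; note $A_0 = S$, that the $A_\beta$ are decreasing, and $A_\beta = \bigcap_{\gamma<\beta} A_\gamma$ at limits. If $A_\beta \ne \emptyset$, put $s(\beta) = \sup\{x(\beta) : x \in A_\beta\}$, which exists by compactness of $L$. If $A_\beta = \emptyset$, put $s(\gamma) = \min L$ for all $\gamma \ge \beta$ and stop.

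Next I would verify $s = \sup S$ in two steps. For the upper-bound property, take $x \in S$ with $x \ne s$ and let $\beta$ be the least coordinate where $x$ and $s$ differ; since $x\upharpoonright\beta = s\upharpoonright\beta$ we have $x \in A_\beta$, so $A_\beta \ne \emptyset$ and (the $A$'s being decreasing) the recursion had not terminated before $\beta$, whence stage $\beta$ was Case 1 and $s(\beta) = \sup\{y(\beta): y\in A_\beta\} \ge x(\beta)$; as $x(\beta)\ne s(\beta)$ this gives $x(\beta) < s(\beta)$ and $x < s$. For minimality, suppose $u$ is an upper bound with $u < s$, and let $\beta$ be the least coordinate where they differ, so $s(\beta) > u(\beta)$; the recursion cannot have died by stage $\beta$ (that would force $s(\beta) = \min L \le u(\beta)$), so $s(\beta)$ is a genuine coordinate supremum, and $s(\beta) > u(\beta)$ yields some $x \in A_\beta$ with $x(\beta) > u(\beta)$. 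Since $x\upharpoonright\beta = s\upharpoonright\beta = u\upharpoonright\beta$, this gives $x > u$, contradicting that $u$ bounds $S$. Hence $s$ is the least upper bound.

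Finally, for the continuum statement it remains, given compactness, to prove $L^\alpha$ is densely ordered. Given $x < y$, let $\beta$ be the least coordinate where they differ, so $x(\beta) < y(\beta)$; by density of $L$ choose $c$ with $x(\beta) < c < y(\beta)$ and let $z$ agree with $x$ below $\beta$, take value $c$ at $\beta$, and equal $\min L$ afterwards, so that $x < z < y$. I expect the main obstacle to be the bookkeeping in the supremum construction, specifically the limit stages and the "dying branch" case $A_\beta = \emptyset$, which occurs exactly when a coordinate supremum fails to be attained in $L$; handling the padding by $\min L$ correctly is precisely what makes both halves of the least-upper-bound verification go through. Everything else (endpoints, infima by the symmetry $L \mapsto L^*$, and density) is then routine.
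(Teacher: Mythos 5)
Your proof is correct, and it takes a genuinely different route from the paper. The paper proves Nov\'ak's theorem by induction on $\alpha$: the successor step is an explicit construction of suprema in a product $K\times L$ of two compact lines, while the limit step is topological --- the coherent sequences form a closed (hence compact, by Tychonoff) subset $P$ of the product space $\prod_{\beta<\alpha}L^{\beta}$, and the lexicographic power $L^{\alpha}$ is exhibited as a continuous image of $P$ via $\pi(\overline{x})(\beta)=\overline{x}(\beta+1)(\beta)$, continuity being checked on subbasic sets. You instead avoid both the induction and the topology entirely, constructing $\sup S$ directly by transfinite recursion along the coordinates, following the ``branch of partial suprema'' $A_{\beta}=\{x\in S: x\restriction\beta=s\restriction\beta\}$ and padding with $\min L$ once the branch dies; your two-step verification (upper bound via the least differing coordinate, minimality via a witness $x\in A_{\beta}$ with $x(\beta)>u(\beta)$) is sound, including the limit-stage case where $A_{\beta}=\bigcap_{\gamma<\beta}A_{\gamma}$ may be empty, and your reductions (infima via $(L^{*})^{\alpha}\cong(L^{\alpha})^{*}$, endpoints as constant functions, density for the continuum clause) all go through. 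What each approach buys: yours is purely order-theoretic, self-contained, and treats all ordinals uniformly, making visible exactly where suprema fail to be attained along a branch --- a phenomenon the paper's argument hides inside the compactness of $P$; the paper's proof is shorter modulo the Tychonoff theorem and general facts about continuous images of compact spaces, and its inverse-limit-style presentation of $L^{\alpha}$ is of some independent interest.
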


\begin{proof}
	We prove the theorem by induction on $\alpha$. Case $\alpha=1$ is trivial, and for the succesor step it is sufficient to show that the product of two compact linear orders is compact, and the product of two continua is a continuum. \\
	Let $(K,\le_K)$ and $(L,\le_L)$ be compact lines. Surely $(\min{K},\min{L})$ is the minimal element of $K\times L$, and
	$(\max{K},\max{L})$ is the greatest element. Let $S\subseteq K \times L$ be a non-empty set. We aim to find the least upper bound for $S$. If projection on the first coordinate $\operatorname{proj}_K{S}$ has no maximal element, then $(\sup{\operatorname{proj}_K{S}},\min{L})$ is the least upper bound for $S$. If $\operatorname{proj}_K{S}$ has a maximal element, say $s_K$, then  let $s_L$ be the least upper bound of $\{x \in L|\; (s_K,x) \in S\}$. Then $(s_K,s_L)$ is the least upper bound for $S$ in $K\times L$. In a similar way we prove that $S$ has the greatest lower bound.
	\\
	Assume now that $\alpha$ is a limit ordinal, and for every $\beta<\alpha$, each lexicographic power $L^\beta$ is compact. Then
	the space $\displaystyle{\prod_{\beta<\alpha}\; L^{\beta}}$ with the product topology (not the \emph{lexicographic} power) is a compact Hausdorff space, and so the set
	$$P=\{\overline{x}\in \prod_{\beta<\alpha}L^\beta|\; \forall{\beta_1<\beta_2<\alpha}\thickspace
	\overline{x}(\beta_2)\restriction \beta_1 = \overline{x}(\beta_1) \}$$
	is also compact, as a closed subset of a compact space. It is then sufficient to show that the lexicographic power $L^\alpha$ is a continuous image of $P$. The obvious function witnessing that is given by $\pi(\overline{x})(\beta)=\overline{x}(\beta+1)(\beta)$. To see that $\pi$ is continuous, it is enough to check that $\pi^{-1}[\{x\in L^\alpha|\; x>a \}]$ is open for every $a \in L^\alpha$. But since inequality $x>a$ must be true already between some initial segments
	of $x$ and $a$, it is easy to check that
	$$\pi^{-1}[\{x\in L^\alpha|\; x>a \}]=\{\overline{x} \in \prod_{\beta<\alpha}L^\beta|\;
	\exists{\gamma<\alpha}\; \overline{x}(\gamma)>a\restriction \gamma  \} \cap P.$$
	The latter set is clearly open in $P$. The claim about continua is straightforward, given that a compact line is a continuum if and only if it is dense in itself.
\end{proof}

\begin{proof}[Proof of Theorem 5]

Notice, that $\Lo=\displaystyle{\bigcup_{\alpha<\omega_1}{[-1,1]^\alpha}}$. We check conditions from Lemma 1. Density is clear. For verification of the last condition, take two sequences 
$\ldots <x_n<x_{n+1}<\ldots<y_{n+1}<y_n<\ldots$. There is some level $\alpha$, such that all $x_n$ and $y_n$ belong to $[-1,1]^\alpha$. Since it is compact, we can take supremum of $\{x_n\}_{n<\omega}$ in this set, and it will clearly separate each $x_n$ from each $y_n$.\\
The only problematic case is when we want to separate an increasing sequence $\{x_n\}_{n<\omega}$ from $\tilde{x}=\sup_{n<\omega}{x_n}$, where supremum is taken in some $[-1,1]^\alpha$, big enough to contain $\tilde{x}$ and each $x_n$. But then it is clear, that $\tilde{x}^\frown{-1} \in [-1,1]^{\alpha+1}$ is a good separating element. Proof for $\mathbb{L}_{(D,d_0)}^{\omega_1}$ goes exactly the same way.
\end{proof}

\begin{thm}
	$\Lo$ is prime countably saturated. Likewise, if $D$ is a separable compact line, and $d_0 \in D$ is neither the least, nor the greatest element, then $\Ldd$ is prime countably saturated.
\end{thm}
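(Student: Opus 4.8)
The plan is to prove directly that $\Ldd$ (and in particular $\Lo$) order-embeds into an arbitrary countably saturated $M$; by the definition of \emph{prime countably saturated}, this is all that is required. I would use the representation from the proof of Theorem 5, namely $\Ldd=\bigcup_{\alpha<\omega_1}D^\alpha$, where each $D^\alpha$ carries the lexicographic order and the inclusion $D^\alpha\hookrightarrow D^\beta$ for $\alpha<\beta$ is padding by $d_0$. I would then construct the embedding $g$ as the union of a coherent chain of order-embeddings $f_\alpha\colon D^\alpha\to M$ satisfying $f_\beta\restriction D^\alpha=f_\alpha$, starting from an embedding of $D$ itself (which exists by Theorem 3, since a separable compact $D$ contains no copy of $\omega_1$ or $\omega_1^*$).

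The engine is the following consequence of Lemma 1: $M$ realizes every one-point extension of a \emph{countable} suborder, so if $P\subseteq M$ is an already-embedded suborder and a new point $p$ has a cut over $P$ that is \emph{countably generated} — meaning there is a countable $C\subseteq P$ cofinal in $\{q\in P: q<p\}$ and coinitial in $\{q\in P: q>p\}$ — then $p$ can be placed: realize the one-point extension of $C$ in $M$ by Lemma 1, obtaining $g(p)$ with $g[C_{<p}]<g(p)<g[C_{>p}]$, and the cofinality/coinitiality of $C$ force $g$ to stay monotone on all of $P\cup\{p\}$. Thus the whole construction reduces to placing the new points of each level one at a time, always with a countably generated cut over the predecessors.

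What makes this feasible is a \emph{countable character estimate at countable levels}: for countable $\alpha$ and separable $D$, every point of $D^\alpha$ has countable character. Fixing a countable dense $Q\subseteq D$ with $d_0\in Q$, the countable family $\{\,(x\restriction\beta)^\frown\langle q\rangle : \beta<\alpha,\ q\in Q\,\}$ (padded by $d_0$) is cofinal below $x$, and a symmetric family is coinitial above it; separability supplies $Q$ and the countability of $\alpha$ keeps the index set countable. Applied to the suborder $\bigcup_{\beta<\alpha}D^\beta$ of bounded-support elements, this shows that each ``limit'' element of $D^\alpha$ (one whose support is cofinal in $\alpha$) has a countably generated cut over the already-embedded lower levels, so its placement is licensed by the previous paragraph. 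The successor step $D^\alpha\to D^{\alpha+1}=D^\alpha\times D$ is handled the same way, blowing up each point into a $D$-fibre whose countable skeleton is $\{x\}\times Q$.

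The main obstacle is the \emph{simultaneous}, order-correct placement of the continuum-many new points of a single level. For the principal case $\Lo$, where $D=[-1,1]$ is a continuum and hence each $D^\lambda$ is dense, the suborder $\bigcup_{\beta<\lambda}D^\beta$ is dense in $D^\lambda$; therefore distinct new points are separated by old, already-placed points, every relevant cut is governed entirely by the countable skeleton coming from the lower levels, and the placements cohere automatically. For a general separable compact $D$ with jumps — for instance $D=\{-1,0,1\}$ — this density fails: a single gap of the lower levels can contain an entire convex cluster of new points (a jump of $D^\lambda$ that is only resolved at higher levels), and such a cluster must be order-embedded into the corresponding open interval of $M$. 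The real work lies in showing that these clusters are again countably approximated, so that Lemma 1 still applies, and in the bookkeeping that keeps every relevant cut countable across all $\omega_1$ levels; separability of $D$ together with the countability of each level $\alpha$ is precisely what guarantees this throughout.
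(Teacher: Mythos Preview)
Your approach is essentially the paper's, just unpacked. The paper isolates the inductive step as Lemma~2 (any embedding $L\hookrightarrow X$ with $L$ of countable character extends along $L\simeq L\times\{e_0\}\hookrightarrow L\times E$ for separable $E$) and verifies the countable-character hypothesis via Proposition~2. Your ``blow up each point into a $D$-fibre with skeleton $\{x\}\times Q$'' is exactly the proof of Lemma~2, and your countable-character estimate for $D^\alpha$ is the analogue of Proposition~2. Where you differ is in being explicit about the limit stage $\lambda$: the paper's write-up (``build an embedding by induction on $\alpha$, using the Lemma below'') only literally covers the successor step $I^\alpha\to I^{\alpha+1}$ and leaves the passage from $\bigcup_{\alpha<\lambda}D^\alpha$ to $D^\lambda$ to the reader; you correctly flag this and sketch the fix.

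Your worry about non-connected $D$ is legitimate but easily dispatched by the same mechanism the paper already uses. Each convex cluster $C\subseteq D^\lambda\setminus\bigcup_{\alpha<\lambda}D^\alpha$ is a convex subset of $D^\lambda$, which (by the Proposition~2 argument, applied to any separable compact $D$) contains no copy of $\omega_1$ or $\omega_1^*$; hence $C$ itself has countable character, and by Theorem~4 it embeds into any countably saturated order --- in particular into the open interval of $M$ determined by the countably-generated cut that $C$ defines over the lower levels. This is precisely how Lemma~2 handles the fibres $\{l\}\times E$ at successor stages, and it works unchanged here. One small caution: your explicit witnessing family $\{(x\restriction\beta)^\frown\langle q\rangle:\beta<\alpha,\ q\in Q\}$ can fail to be cofinal below $x$ in edge cases (e.g.\ when $x$ is eventually $\min D$ and the first disagreement with $y$ is across a jump of $D$); you may need to pad with $\max D$ before reverting to $d_0$. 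The underlying countable-character fact is correct, so this is a cosmetic repair.
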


  $\Lo=\{x\in[-1,1]^{\omega_1}|\,|\{\alpha<\omega_1:\, x(\alpha)\neq 0\}|\le \omega\}$ is (isomorphic to) an increasing sum $\bigcup_{\alpha<\omega_1}{[-1,1]^{\alpha}}$. If $X$ is any countably saturated linear order, we build an embedding $\Lo \rightarrow X$ by induction on $\alpha$, using the Lemma below.

\begin{lem}
	Let $(X,\le_X)$ be any countably saturated linear order, $(L,\le_L)$ a linear order of countable character, and $(E,\le_E)$ be separable linear order with distinguished element $e_0$. Then each embedding $i:\, L\simeq L\times\{e_0\}\rightarrow X$ extends to $\tilde{i}:L\times E\rightarrow X$.
\end{lem}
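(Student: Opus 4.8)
The plan is to build $\tilde i$ in two stages: first extend $i$ to $L\times E_0$, where $E_0\ni e_0$ is a fixed countable dense subset of $E$, and then extend further to all of $L\times E$ by filling cuts. For the first stage I would enumerate $E_0=\{e_0,e_1,e_2,\dots\}$ and define the map one \emph{column} at a time, keeping $\tilde i$ extended over $L\times\{e_0,\dots,e_n\}$ and adding the column $L\times\{e_n\}$ at step $n$, starting from the base column $\tilde i\restriction L\times\{e_0\}=i$. The point to isolate is that, once the columns placed so far are correctly \emph{nested} (for $l_1<_L l_2$ the whole fiber over $l_1$ lies below the whole fiber over $l_2$), the constraints that a new value $\tilde i(l,e_n)$ must satisfy for different $l$ automatically fall into pairwise disjoint, correctly ordered intervals of $X$; hence whatever admissible value is chosen in each fiber, the resulting map is an order embedding. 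Thus the whole difficulty reduces to a per-fiber existence question: for fixed $l$, is there room in $X$ to place $\tilde i(l,e_n)$?

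When $e_n$ lies strictly between two already-placed elements $e^-<e_n<e^+$ of the current column set, the admissible interval is $\bigl(\tilde i(l,e^-),\tilde i(l,e^+)\bigr)$, which is non-empty by density of $X$; this case needs nothing more. The interesting cases are when $e_n$ becomes a new maximum (or, symmetrically, a new minimum) of the placed elements. Say $e_n$ is a new maximum, with $e^-$ the previous maximum and $e^\flat$ the current minimum. Then $\tilde i(l,e_n)$ must lie above $\tilde i(l,e^-)$ but below $\tilde i(l',e^\flat)$ for every $l'>_L l$, and this family is a priori uncountable. Here I invoke the \emph{countable character} of $L$: choose a countable $B\subseteq L$ witnessing $l$ from above, so that $\{x:A<x<B\}=\{l\}$; the defining property forces $B$ to be coinitial in $\{l':l'>_L l\}$, whence it suffices to place $\tilde i(l,e_n)$ below the countable set $\{\tilde i(b,e^\flat):b\in B\}$. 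If $B$ has a least element this is immediate by density; otherwise pick a strictly decreasing sequence coinitial in $B$, apply $\tilde i(\cdot,e^\flat)$ to get a countable decreasing sequence in $X$, and use that in a countably saturated order \emph{no countable decreasing sequence has an infimum}, so its set of lower bounds has no largest element and some lower bound lies strictly above $\tilde i(l,e^-)$; any such element is admissible. The new-minimum case is symmetric, using a countable $A$ witnessing $l$ from below together with the absence of suprema of increasing sequences. This per-fiber existence, married to the automatic nesting, completes the construction on $L\times E_0$.

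For the second stage, fix $e\in E\setminus E_0$ and let $A_e,B_e\subseteq E_0$ be the elements of $E_0$ below, resp. above, $e$; by density of $E_0$ they are cofinal/coinitial at $e$. For each $l$ the value $\tilde i(l,e)$ must separate the countable sets $\{\tilde i(l,e'):e'\in A_e\}$ and $\{\tilde i(l,e''):e''\in B_e\}$; choosing cofinal/coinitial sequences, this is exactly an $(\omega,\omega)$-cut in $X$, filled by the \emph{no $(\omega,\omega)$-gap} clause of countable saturation (and, in the degenerate case that $e$ is extreme relative to $E_0$, by the no-sup/no-inf clause). Consistency is again automatic: for $e_1<_E e_2$ density of $E_0$ supplies an $E_0$-point between them whose already-defined image separates the two new values, so the full map remains an order embedding both within and across fibers.

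The step I expect to be genuinely delicate is the new-extreme case of the first stage, since it is the only place where the obstruction to finding room is a priori of size $|L|$; the whole argument hinges on using countable character to shrink this obstruction to a countable sequence and then feeding that sequence into the appropriate gap clause of Lemma 1. Everything else is either density or bookkeeping verifying that the independently chosen per-fiber values cohere into a single embedding.
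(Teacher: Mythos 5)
Your strategy is genuinely different from the paper's: the paper extends $i$ fiber-by-fiber in a single step, mapping each fiber $\{l\}\times E$ into the gap of $X$ cut out by the images of the character witnesses of $l$, whereas you build the map column-by-column over a countable dense $E_0$, maintain a nesting invariant, and only then fill cuts. Your Stage 1 is correct: the observation that a witness set $B$ for the character of $l$ must be coinitial in $\{l'\in L : l'>_L l\}$ is exactly right, it legitimately shrinks the a priori uncountable family of upper constraints to the countable set $\{\tilde i(b,e^\flat) : b\in B\}$, and the no-infimum clause then produces an admissible value; moreover the nesting invariant really does make the independent per-fiber choices cohere, even when $L$ has consecutive elements (a configuration where the paper's own one-shot argument would itself need extra care, since the character-witness gaps of two consecutive points of $L$ necessarily overlap).

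The gap is in Stage 2, in exactly the degenerate case you set aside. Suppose $E$ has a greatest element $M\notin E_0$; this is a real possibility, e.g. $E=[-1,1]$ with $E_0=\mathbb{Q}\cap(-1,1)$, which is a perfectly good countable dense subset. Then $\tilde i(l,M)$ is constrained within its own fiber only from below, and the no-sup clause hands you \emph{some} element of $X$ above the countable increasing set $\{\tilde i(l,e') : e'\in E_0\}$ --- but with no upper bound at all, so nothing prevents it from landing above points of fibers over $l'>_L l$. In particular, if also $m=\min E\notin E_0$, then for $l<_L l'$ the two values $\tilde i(l,M)$ and $\tilde i(l',m)$ are chosen independently inside the same region of $X$ lying between the placed parts of the two fibers, and may come out in the wrong order (or equal). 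Your consistency argument --- ``density of $E_0$ supplies an $E_0$-point between them'' --- is unavailable here: no point of $L\times E_0$ lies lexicographically between $(l,M)$ and $(l',m)$ when $l'$ is a successor of $l$, and even when $L$ is dense, the needed comparison with an interpolating fiber is precisely what has not been established for these extreme values. The repair is cheap and uses machinery you already built: enlarge $E_0$ so that it contains $\min E$ and $\max E$ whenever they exist (it remains countable and dense). Then every $e\in E\setminus E_0$ has $E_0$-points on both sides, Stage 2 only ever meets the non-degenerate case, and the extreme columns of $E$ are handled in Stage 1, whose new-maximum/new-minimum argument via countable character is designed for exactly this situation.
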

\begin{proof}
	Det $E_0\subseteq E$ be a countable dense subset. Fix $l\in L$ and sequences $\alpha_n$ increasing, $\beta_n$ decreasing, witnessing countable character of $l$. Then, there exists an extension:\\
	\begin{tikzcd}
		(\{l\}\cup\{\alpha_n,\beta_n|\, n<\omega\},\le_L) \arrow{r}{i} \arrow{d}{j} & X\\
		(\{\alpha_n\}_{n<\omega},\le_L)+E_0+(\{\beta_n\}_{n<\omega},\le_L) \arrow[dashed]{ur}[swap]{i_l}
	\end{tikzcd}\\
where $j(l)=e_0$, $j(\alpha_n)=\alpha_n$. and $j(\beta_n)=\beta_n$ for every $n$. \\
Next, we extend $$i_l:(\{\alpha_n\}_{n<\omega},\le_L)+E_0+(\{\beta_n\}_{n<\omega},\le_L) \rightarrow X,$$ to $$\tilde{i}_l: (\{\alpha_n\}_{n<\omega},\le_L)+E+(\{\beta_n\}_{n<\omega},\le_L).$$

This, in turn, can be done, because $E_0$ is countable and dense in $E$. Finally, we set $\tilde{i}(l,e)=i_l(e)$.
\end{proof}

Now we only need to know, that the sets $[-1,1]^\alpha$ have countable character. 

\begin{prop}
	If $\alpha$ is a countable ordinal, then $[-1,1]^\alpha$ doesn't contain any copy of $\omega_1$ or $\omega_1^*$. In particular, it has countable character.
\end{prop}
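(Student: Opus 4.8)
The plan is to prove the (stronger) statement by induction on the countable ordinal $\alpha$: for every countable $\alpha$, the lexicographic power $[-1,1]^\alpha$ contains no strictly increasing $\omega_1$-sequence. This already covers $\omega_1^*$ as well, because coordinatewise negation $\sigma(x)(\gamma)=-x(\gamma)$ is an order-reversing bijection of $[-1,1]^\alpha$ onto itself (if $x$ and $y$ first differ at coordinate $c$, then so do $\sigma(x),\sigma(y)$, with the inequality reversed), so any copy of $\omega_1^*$ would yield a copy of $\omega_1$. Once both $\omega_1$ and $\omega_1^*$ are excluded, the final clause about character is immediate from Proposition 1: an order of uncountable character must contain one of them.

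The engine of the induction is the elementary observation that if a linear order $M$ contains no copy of $\omega_1$, then every non-decreasing map $h\colon\omega_1\to M$ is eventually constant. Indeed, picking for each value in the range the least index attaining it produces a strictly increasing sequence, so the range must be countable; the preimages of the values are then convex subsets of $\omega_1$ partitioning it, and by the regularity of $\omega_1$ one of them is uncountable, hence a final segment $[\delta,\omega_1)$, on which $h$ is constant.

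The base case $\alpha\le 1$ is just separability of $[-1,1]$: a strictly increasing $\omega_1$-sequence $x_\xi$ would give pairwise disjoint nonempty open intervals $(x_\xi,x_{\xi+1})$, each containing a distinct rational. For the inductive step fix $\alpha>1$ and a putative strictly increasing sequence $\{f_\xi\}_{\xi<\omega_1}$, and for $\gamma\le\alpha$ write $\pi_\gamma f_\xi$ for the restriction to the first $\gamma$ coordinates; since lexicographic comparison is decided on initial segments, each $\{\pi_\gamma f_\xi\}_\xi$ is non-decreasing in $[-1,1]^\gamma$. If $\alpha=\beta+1$, the inductive hypothesis applies to $[-1,1]^\beta$, so by the observation $\pi_\beta f_\xi$ is constant on a final segment; there the sequence is governed by the last coordinate $f_\xi(\beta)\in[-1,1]$, which is then strictly increasing, contradicting the base case.

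The limit case is where the argument is most delicate and is the step I expect to be the crux. For every $\gamma<\alpha$ the inductive hypothesis and the observation give an ordinal $\delta_\gamma<\omega_1$ beyond which $\pi_\gamma f_\xi$ is constant. Because $\alpha$ is a \emph{countable limit}, every coordinate $c<\alpha$ satisfies $c+1<\alpha$, and there are only countably many such $\gamma$; hence $\delta:=\sup_{\gamma<\alpha}\delta_\gamma<\omega_1$ by the regularity of $\omega_1$. For $\xi\ge\delta$ and each $c<\alpha$ the value $f_\xi(c)=(\pi_{c+1}f_\xi)(c)$ is constant, so all $f_\xi$ with $\xi\ge\delta$ coincide, contradicting strict monotonicity. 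The two points to handle with care are exactly the use of the regularity of $\omega_1$ to keep $\delta$ countable, and the fact (invoked in the observation) that an uncountable convex subset of $\omega_1$ must be a final segment.
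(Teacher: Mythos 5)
Your proof is correct, but its machinery differs from the paper's in a genuine way, even though both are transfinite inductions with a successor/limit split. The paper first reduces to powers of the two-element order, via $[-1,1]^\alpha \hookrightarrow (2^\omega)^\alpha \simeq 2^{\omega\cdot\alpha}$, and then runs a minimal-counterexample argument on $2^\gamma$: in the successor step it pigeonholes on the \emph{binary} last coordinate (uncountably many chain elements share the same last bit, and their restrictions form an $\omega_1$-chain one level down), and in the limit step it uses density of the eventually-zero sequences $2^{<\gamma}$ in $2^\gamma$ to push the chain into the union $\bigcup_{\delta<\gamma}2^\delta$ and then pigeonholes into a single $2^\beta$. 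The reduction to $2$ is precisely what makes that successor pigeonhole work. You avoid the reduction altogether by introducing a different key lemma: a non-decreasing map from $\omega_1$ into a linear order with no copy of $\omega_1$ is eventually constant. This lemma drives both of your cases --- in the successor step the projection stabilizes and the last coordinate must then strictly increase, contradicting separability of $[-1,1]$; in the limit step every coordinate projection stabilizes and a countable supremum of thresholds (regularity of $\omega_1$) freezes the whole sequence. Your route is more self-contained and in fact proves the more general statement that $L^\alpha$ has no copy of $\omega_1$ for \emph{any} linear order $L$ without one and any countable $\alpha$, whereas the paper's route trades that generality for very short pigeonhole steps made possible by working over $2$. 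Both proofs handle $\omega_1^*$ by an order-reversing symmetry (you make it explicit via coordinatewise negation; the paper leaves it implicit), and both invoke the regularity of $\omega_1$ at the decisive moment, just in different places.
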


\begin{proof}
	It is sufficient to prove that $2^\gamma$ doesn't contain copy of $\omega_1$, for any countable ordinal $\gamma$. Assume otherwise, and let $\gamma<\omega_1$ be minimal, such that $\omega_1 \hookrightarrow 2^\gamma$. \\
	Suppose that $\gamma$ is limit. It is easy to check that the set
	$$2^{<\gamma}=\{x\in2^\gamma|\, \exists_{\beta<\gamma}\forall{\beta<\delta<\gamma}\; x(\delta)=0 \}=\bigcup_{\delta<\gamma}{2^\delta}$$
	is dense in $2^\gamma$, and so if $2^\gamma$ has an uncountable well-ordered sequence, so does $2^{<\gamma}$. But then for some $\beta<\gamma$, $2^\beta$ has uncountably many elements of that sequence, which itself constitute a copy of $\omega_1$. This contradicts the minimality of $\gamma$.\\
	Suppose now that $\gamma$ is successor. If $2^\gamma$ has an uncountable well-ordered sequence, then either $\omega_1$ of its elements has
	$0$ on the last coordinate, or $\omega_1$ of its elements has $1$ on the last coordinate. Either way, their restriction to the first $\gamma-1$ coordinates constitute an uncountable well-ordered sequence in $2^{\gamma-1}$. This again contradicts the minimality of $\gamma$.
\end{proof}

We address the question of uniqueness of a prime countably saturated order. This question is completely settled under CH, and it was proved by Hausdorff. It is in fact a particular case of a much more general phenomenon. Namely, given any first order theory, all $\kappa$-saturated models of cardinality $\kappa$ are pairwise isomorphic. For the proof of this well-known fact, we refer to for example \cite{hod}, or to \cite{fraisse} for a more general approach, using the language of category theory.

\begin{thm} Assume CH. Then every two countably saturated linear orders of cardinality $\mathfrak{c}$ are isomorphic.
\end{thm}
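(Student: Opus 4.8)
The plan is to run a standard back-and-forth argument, using CH precisely to arrange that the construction closes off after $\omega_1 = \mathfrak{c}$ many steps. Let $(X, \le_X)$ and $(Y, \le_Y)$ be two countably saturated linear orders of cardinality $\mathfrak{c} = \omega_1$. First I would enumerate $X = \{x_\alpha : \alpha < \omega_1\}$ and $Y = \{y_\alpha : \alpha < \omega_1\}$. I will build an increasing chain of partial isomorphisms $f_\alpha \colon A_\alpha \to B_\alpha$, where $A_\alpha \subseteq X$ and $B_\alpha \subseteq Y$, each $f_\alpha$ an order-isomorphism of its domain onto its range, the whole chain continuous at limits (take unions), and each $A_\alpha$, $B_\alpha$ countable. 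At stage $\alpha + 1$ I alternate: on even steps I ensure $x_\alpha \in A_{\alpha+1}$ (\emph{forth}), on odd steps I ensure $y_\alpha \in B_{\alpha+1}$ (\emph{back}). The final map $f = \bigcup_{\alpha < \omega_1} f_\alpha$ is then an order-isomorphism from $X$ onto $Y$.

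The heart of the matter is the single extension step, and this is exactly where countable saturation does the work. Suppose $f_\alpha \colon A_\alpha \to B_\alpha$ is a partial isomorphism with countable domain, and I wish to add a new point $x \in X \setminus A_\alpha$ to the domain. I view $f_\alpha$ as an increasing map $f_\alpha \colon A_\alpha \to Y$, and I set $b = A_\alpha \cup \{x\}$ with $a = A_\alpha$ and $i \colon a \hookrightarrow b$ the inclusion. Since $b$ is a countable linear order, the defining property of countable saturation (Definition~1) applied to the maps $i$ and $f_\alpha$ yields an extension $\tilde{f} \colon b \to Y$ with $\tilde{f} \circ i = f_\alpha$; the value $\tilde{f}(x)$ is the desired image, and it lies in the correct cut of $B_\alpha$ because $\tilde{f}$ is increasing. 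The \emph{back} direction is symmetric, using countable saturation of $X$ to extend $f_\alpha^{-1}$ by one point. Since at each successor stage we only ever add a single point to a countable domain, all the $A_\alpha$ and $B_\alpha$ remain countable, so saturation always applies.

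I expect the main obstacle, and the only place CH is genuinely needed, to be the bookkeeping that guarantees surjectivity of the final map onto both $X$ and $Y$. This is where $\mathfrak{c} = \omega_1$ enters: because we have exactly $\omega_1$ points to capture on each side and exactly $\omega_1$ stages available, the forth-and-back alternation over all $\alpha < \omega_1$ exhausts both enumerations, so $\bigcup_\alpha A_\alpha = X$ and $\bigcup_\alpha B_\alpha = Y$. Without CH the cardinality $\mathfrak{c}$ could exceed $\omega_1$, and a length-$\omega_1$ construction would no longer suffice to enumerate all of $X$; moreover the countable domains produced at intermediate stages would no longer be adequate to realize the relevant cuts, since one could no longer assume that only countably many points need to be fitted into a single gap. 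I would close by remarking that continuity at limits is immediate (a union of an increasing chain of order-embeddings is an order-embedding) and that well-definedness of the limit map follows from the chain being increasing, so no verification beyond the single extension step is required.
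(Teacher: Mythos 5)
Your proof is correct: it is the standard back-and-forth construction of length $\omega_1$, with countable saturation supplying each one-point extension and CH guaranteeing that the countable-domain hypothesis remains available at every stage. This is precisely the argument the paper has in mind — it states Theorem 8 without proof, attributing it to Hausdorff and referring to \cite{hod} for the general fact that $\kappa$-saturated models of cardinality $\kappa$ are isomorphic, which is proved by exactly this back-and-forth method.
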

	Let us note, that this is not true without CH. Namely, if $2^\omega \ge \omega_2$, then both 
	$$\{x\in[-1,1]^{\omega_1}|\,|\{\alpha<\omega_1:\, x(\alpha)\neq 0\}|\le \omega\},$$ and
	$$\{x\in[-1,1]^{\omega_2}|\,|\{\alpha<\omega_2:\, x(\alpha)\neq 0\}|\le \omega\},$$
	are countably saturated linear orders of cardinality continuum, but are not isomorphic, since the latter contains an isomorphic copy of the ordinal $\omega_2$. This was noted already in \cite{Gillman}. In many cases we can provide somewhat better example. For the forcing terminology we refer the reader to \cite{kunen}.
	
\begin{thm} It is relatively consistent with ZFC that there exist two non-isomorphic, countably saturated linear orders of cardinality $\mathfrak{c}$, none of which contains copy of neither $\omega_2$ nor $\omega_2^*$.
\end{thm}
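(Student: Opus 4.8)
The plan is to fix an isomorphism invariant that depends only on the \emph{gap spectrum} of the order and then force a model in which $\Lo$ and a second order differ on it. Recall that a \emph{gap} of a linear order is a cut $(A,B)$ in which $A$ has no greatest and $B$ no least element, and that its \emph{type} is the pair $(\operatorname{cf}A,\operatorname{coi}B)$. First I would record which types can occur in a countably saturated order $L$ that omits $\omega_2$ and $\omega_2^*$. By Lemma 1 the types $(\omega,\omega)$, $(\omega,1)$, $(1,\omega)$ and $(1,1)$ are excluded, and by Proposition 1 every cofinality and coinitiality occurring in $L$ is a regular cardinal $\le\omega_1$; hence the only admissible gap types are $(\omega_1,\omega_1)$, $(\omega,\omega_1)$ and $(\omega_1,\omega)$. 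Moreover the last two \emph{always} occur: take a bounded strictly increasing $\omega$-sequence (countable saturation bounds every countable set and forbids suprema of countable increasing sequences, so such a sequence exists and has no supremum); the coinitiality of its set of upper bounds is neither $\omega$ (that would produce a forbidden $(\omega,\omega)$-gap) nor $\ge\omega_2$ (that would embed $\omega_2^*$), so it equals $\omega_1$, giving an $(\omega,\omega_1)$-gap, and symmetrically an $(\omega_1,\omega)$-gap. Consequently the gap spectrum of such an $L$ is either $\{(\omega,\omega_1),(\omega_1,\omega)\}$ or $\{(\omega,\omega_1),(\omega_1,\omega),(\omega_1,\omega_1)\}$, and which of the two holds is invariant under isomorphism.

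Next I would check that $\Lo$ realises an $(\omega_1,\omega_1)$-gap, placing it in the larger class. Fix $x\in[-1,1]^{\omega_1}$ with uncountable support, say $x\equiv\tfrac12$, and let $(A,B)$ be the cut it induces on $\Lo=\bigcup_{\alpha<\omega_1}[-1,1]^\alpha$. The truncations $x\restriction\alpha$ lie below $x$ and are cofinal in $A$, while any countable subfamily of them is bounded strictly below $x$ by a single longer truncation; hence $\operatorname{cf}A=\omega_1$, and by the symmetric argument $\operatorname{coi}B=\omega_1$. Thus $\Lo$ has gap spectrum $\{(\omega,\omega_1),(\omega_1,\omega),(\omega_1,\omega_1)\}$, so it suffices to produce, consistently, a countably saturated order $M$ of cardinality $\mathfrak c$ omitting $\omega_2$ and $\omega_2^*$ whose spectrum is the smaller set, i.e. which has \emph{no} $(\omega_1,\omega_1)$-gap; then $M\not\cong\Lo$.

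To build $M$ I would start from a model of GCH and force with the Cohen forcing $\Fn$ for $\kappa=\omega_2$, obtaining a ccc extension in which $\mathfrak c=\aleph_2$ and $\aleph_2^{\aleph_1}=\aleph_2$. In this model I would construct $M$ as an increasing continuous union $\bigcup_{\alpha<\omega_1}M_\alpha$ of \emph{levels}, each $M_\alpha$ chosen to contain no copy of $\omega_1$; exactly as in the pigeonhole behind Proposition 2 this forces $\omega_2,\omega_2^*\not\hookrightarrow M$, since any $\omega_2$-chain would meet a single level in an uncountable well-ordered set. The levels are to be filled by a generic recursive bookkeeping that (i) realises every one-point extension of every countable suborder, so that $M$ is countably saturated by Lemma 1, and (ii) resolves every potential $(\omega_1,\omega_1)$-cut either by inserting a separating element or by collapsing one of its sides to countable character, so that only $(\omega,\omega_1)$- and $(\omega_1,\omega)$-gaps survive; enumerating all $\aleph_2$-many potential cuts is possible because $\aleph_2^{\aleph_1}=\aleph_2$.

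The main obstacle is the tension between requirement (ii) and the level structure: filling $(\omega_1,\omega_1)$-cuts inserts separating points that may themselves generate new $\omega_1$-sequences, and one must verify that this closing-off neither introduces a copy of $\omega_1$ into a single level (which would revive $\omega_2$) nor re-creates an $(\omega_1,\omega_1)$-gap at the $\omega_1$ limit stages. The delicate point is thus the \emph{simultaneous} maintenance of countable saturation, of the no-$\omega_1$-per-level invariant, and of $(\omega_1,\omega_1)$-gap-freeness through the limits; this is where the genericity of the Cohen extension, together with the arithmetic $\aleph_2^{\aleph_1}=\aleph_2$, does the real work. Once $M$ is obtained, both $M$ and $\Lo$ are countably saturated of cardinality $\mathfrak c$, both omit $\omega_2$ and $\omega_2^*$, and they differ on the isomorphism invariant isolated above, so they are non-isomorphic, which yields the desired relative consistency.
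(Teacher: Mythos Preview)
Your reduction to the gap spectrum is clean, and the verification that $\Lo$ carries an $(\omega_1,\omega_1)$-gap is correct. The trouble is entirely in the construction of $M$, and it is not merely that the bookkeeping is delicate: the scheme you outline cannot succeed.

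You propose $M=\bigcup_{\alpha<\omega_1}M_\alpha$ with each $M_\alpha$ omitting $\omega_1$. As stated this only blocks $\omega_2\hookrightarrow M$; to block $\omega_2^*$ by the same pigeonhole you must also have each $M_\alpha$ omit $\omega_1^*$. But now you have rebuilt exactly the hypothesis of Harzheim's characterization (Proposition~7 in the paper): a countably saturated order that is an increasing $\omega_1$-union of suborders each omitting $\omega_1$ and $\omega_1^*$ is \emph{prime} countably saturated, hence by Theorem~12 isomorphic to $\Lo$. So whatever your recursion does at stage~(ii), if it succeeds in producing a countably saturated order with this level structure, that order is $\Lo$ and therefore \emph{does} have $(\omega_1,\omega_1)$-gaps. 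Inserting separating points into pregaps during the construction does not help: the filtration hypothesis alone pins down the isomorphism type. Your appeal to ``genericity of the Cohen extension'' and $\aleph_2^{\aleph_1}=\aleph_2$ is not doing any work here; the obstruction is structural, not combinatorial.

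For comparison, the paper avoids this trap by choosing a completely different invariant: whether the order contains a copy of some fixed linear order of size $\omega_2$ lying in the ground model. A ccc argument shows that in the Cohen extension $\Lo$ contains no ground-model order of size $\omega_2$ (any putative embedding would, by ccc, land in some $[-1,1]^\beta$ on a set of size $\omega_2$, and Erd\H{o}s--Rado in the ground model would then produce $\omega_1$ or $\omega_1^*$ inside $[-1,1]^\beta$). The second order is obtained by starting from such a ground-model $R_0\subseteq(2^{\omega_1})^{\mathbb V}$ and patching it $\omega_1$ times. This invariant is easy to realise on one side and provably avoided on the other; yours is provably realised on both.
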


\begin{proof}

We start with some countable transitive model $\mathbb{V} \models ZFC+CH$.
The idea is that we will find two linear orders satisfying conditions of the Theorem, but only one will contain a copy of some linear order of cardinality $\omega_2$ from $\mathbb{V}$.
Let $\mathbb{P} \in \mathbb{V}$ be any c.c.c. forcing notion which forces $\mathfrak{c}>\omega_1$ (for example a finite support iteration of the Cohen forcing, of length $\omega_2$). Denote by $\mathbb{V}^\mathbb{P}$ the corresponding generic extension. 

\begin{lem} 
		In $\mathbb{V}^\mathbb{P}$, $\Lo$ doesn't contain copy of any linear order of cardinality $\omega_2$, which is in $\mathbb{V}$.
\end{lem}

\begin{proof}
Suppose that $(\omega_2,\preccurlyeq) \in \mathbb{V}$ is a linear order, and that
$$\mathbb{P} \Vdash \text{"$\dot{f}:(\omega_2,\preccurlyeq)\hookrightarrow \Lo$ is increasing"}.$$
For any $\alpha \in \omega_2$, in the generic extension ${f}(\alpha)$ is a sequence of reals
of length $\omega_1$, which is constantly equal zero from some point. From the Maximum Principle, there exists a name $\dot{b_\alpha}$ for which
$\mathbb{P} \Vdash \forall{\omega_1>\delta>\dot{b_\alpha}}\; \dot{f}(\alpha)(\delta)=0.$ Since $\mathbb{P}$ is c.c.c. there are at most countably many possible values of the ordinal $\dot{b_\alpha}$, and so we can define $c_\alpha=\sup{\{ \beta<\omega_1|\; \exists{p\in \mathbb{P}}\; p \Vdash \dot{b_\alpha}=\beta
 \}}$. \\
There is some $\beta < \omega_1$, for which the set $\{ \alpha < \omega_2|\; c_\alpha=\beta \}$ has cardinality $\omega_2$. Denote this set by $S$. Note, that in this case
$$\mathbb{P} \Vdash \dot{f}[S]\hookrightarrow [-1,1]^\beta.$$
The set $S$ was defined in $\mathbb{V}$, so $S \in \mathbb{V}$, and given that $|S|=\omega_2$, $S$ must contain an uncountable well-ordered or reversed well-orered sequence (this is a standard application of the Erd\"os-Rado Theorem in $\mathbb{V}$). But $[-1,1]^\beta$ doesn't contain uncountable (reversed) well-ordered sequences, by Proposition 2. This is a contradiction.

\end{proof}

The Theorem is now proved as follows. We work in $\mathbb{V}^\mathbb{P}$.
As one example take $\Lo$. For the other, let $R_0 \subseteq (2^{\omega_1})^\mathbb{V}$ be of cardinality $\omega_2$, and $R_0 \in \mathbb{V}$. Define inductively a sequence $(R_\alpha)_{\alpha\le\omega_1}$, such that for each $\alpha<\omega_1$, $R_{\alpha+1}$ patches $R_\alpha$ (see Definition 3 in the next section), and $R_\alpha=\displaystyle{\bigcup_{\beta<\alpha}{R_\beta}}$ for limit $\alpha$. Moreover, we assure that $|R_\alpha|\le \mathfrak{c}$ for each $\alpha$. It is clear, that $R_{\omega_1}$ is countably saturated linear order of cardinality $\mathfrak{c}$.
\end{proof}

\subsection{Linear dimension}

We will use the notion of dimension, introduced by Nov\'ak in \cite{novak}, for further classification of linear orders. Some time ago, it was also investigated by Giarlotta, under the name \textit{representability number} \cite{giarlotta}. Let $2$ denote the 2-element linear order.

\begin{defin}[Nov\'ak, \cite{novak}] Let $L$ and $X$ be linear orders, and $|L|\ge 2$. The dimension of $X$ with respect to $L$ is defined as
	$$\ldim{X}=\min\{\alpha \in ON|\; X\hookrightarrow L^\alpha\}.$$
\end{defin}

Let us write down some easy observations.

\begin{prop} For any linear orders $L,L_1,L_2,X$, the following holds.
	\begin{itemize}
		\item If $X_1 \hookrightarrow X_2$, then $\ldim{X_1}\le \ldim{X_2}$.
		\item If $L_1\hookrightarrow L_2$, then 
		$\operatorname{L_1-dim}{X}\ge\operatorname{L_2-dim}{X}$.
		\item If $L_1\hookrightarrow L_2$ and $L_2\hookrightarrow L_1$, then for every $X$, $\operatorname{L_1-dim}{X}=\operatorname{L_2-dim}{X}$.
	\end{itemize}
\end{prop}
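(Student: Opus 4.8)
The plan is to reduce all three items to two elementary facts about the operators $(-)^\alpha$ and the relation $\hookrightarrow$: that order-embeddings compose, and that an embedding of base orders lifts coordinatewise to an embedding of their lexicographic powers. Throughout I use that in the paper's terminology an ``increasing'' map is strictly increasing, so $\hookrightarrow$ asserts the existence of a strict order-embedding.

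For the first item, fix the base order $L$ and suppose $\alpha=\ldim{X_2}$, so that there is an increasing $g\colon X_2\to L^\alpha$. Composing $g$ with a witnessing increasing map $X_1\to X_2$ yields an increasing map $X_1\to L^\alpha$, since a composition of increasing maps is increasing. Hence $X_1\hookrightarrow L^\alpha$, and by the definition of dimension as a minimum, $\ldim{X_1}\le\alpha=\ldim{X_2}$. (If no ordinal witnesses $\ldim{X_2}$, the inequality is read vacuously.)

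The crux is a lemma I would isolate: if $\phi\colon L_1\to L_2$ is increasing, then for every ordinal $\alpha$ the coordinatewise map $\Phi\colon L_1^\alpha\to L_2^\alpha$ given by $\Phi(x)(\beta)=\phi(x(\beta))$ is an embedding of the lexicographic powers, i.e.\ $L_1^\alpha\hookrightarrow L_2^\alpha$. To verify this, take $x<y$ in the lexicographic order on $L_1^\alpha$ and let $\beta$ be the least coordinate at which they differ, so that $x(\beta)<y(\beta)$ while $x(\gamma)=y(\gamma)$ for all $\gamma<\beta$. Applying $\phi$ gives $\Phi(x)(\gamma)=\Phi(y)(\gamma)$ for $\gamma<\beta$, and the strict monotonicity of $\phi$ gives $\Phi(x)(\beta)<\Phi(y)(\beta)$; thus $\beta$ is again the least point of disagreement and $\Phi(x)<\Phi(y)$ lexicographically. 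This is the one place where the argument is not purely formal, and the only subtlety is ensuring the least coordinate of disagreement is preserved, which is exactly what the strictness of $\phi$ buys us.

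Granting the lemma, the second item is immediate: if $L_1\hookrightarrow L_2$ and $\alpha=\operatorname{L_1-dim}{X}$, then $X\hookrightarrow L_1^\alpha\hookrightarrow L_2^\alpha$, whence $X\hookrightarrow L_2^\alpha$ and therefore $\operatorname{L_2-dim}{X}\le\alpha=\operatorname{L_1-dim}{X}$. The third item then follows by symmetry: applying the second item to $L_1\hookrightarrow L_2$ and to $L_2\hookrightarrow L_1$ gives the two opposite inequalities, so $\operatorname{L_1-dim}{X}=\operatorname{L_2-dim}{X}$ for every $X$. Since the whole proposition rests only on the coordinatewise lemma and the transitivity of $\hookrightarrow$, there is no real obstacle beyond careful bookkeeping with the lexicographic order.
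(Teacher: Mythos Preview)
Your argument is correct. The paper does not actually supply a proof of this proposition: it is introduced with the phrase ``Let us write down some easy observations'' and left unjustified, so your coordinatewise-lift lemma together with transitivity of $\hookrightarrow$ is precisely the routine verification the paper is tacitly invoking.
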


In particular, the notions of $\operatorname{2^\omega-dim}{X}, \operatorname{I-dim}{X}$, and $\operatorname{\mathbb{R}-dim}{X}$ coincide. We will denote them $\idim{X}$.\\

\begin{prop}
	If $A$ and $B$ are subsets of some linear order, $\idim{A}<\omega_1$, and $\idim{B}<\omega_1$, then $\idim{(A\cup B)}<\omega_1$.
\end{prop}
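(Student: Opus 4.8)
The plan is to embed $C:=A\cup B$ into a lexicographic power $[-1,1]^{\gamma}$ with $\gamma<\omega_1$ by separating two kinds of information about a point of $C$: its position relative to $A$ (a \emph{coarse} coordinate) and its position inside the piece of $C$ sitting in a single gap of $A$ (a \emph{fine} coordinate). Write $\alpha=\idim A$ and $\beta=\idim B$; both are countable, and I fix increasing maps $f:A\hookrightarrow[-1,1]^{\alpha}$ and $g:B\hookrightarrow[-1,1]^{\beta}$. By Theorem 6 every power $[-1,1]^{\gamma}$ is a compact continuum, hence Dedekind complete and densely ordered with endpoints; I will use this twice. It suffices to produce an increasing injection $C\hookrightarrow[-1,1]^{\alpha}\times[-1,1]^{\beta+1}$, since (with the lexicographic convention) this product is isomorphic to $[-1,1]^{\alpha+\beta+1}$ and $\alpha+\beta+1<\omega_1$.

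For the coarse coordinate, let $\hat A$ be the Dedekind completion of $A$ (with a least and a greatest element adjoined), and define $\pi:C\to\hat A$ by letting $\pi(c)$ be the cut of $A$ determined by $\{x\in A:\ x<c\}$. This $\pi$ is non-decreasing, so each fibre $\pi^{-1}(p)$ is convex in $C$. First I check that a fibre contains at most one point of $A$: two distinct points $a<a'$ of $A$ have different sets of upper bounds in $A$ (the point $a$ bounds the smaller cut but not the larger one), hence determine different cuts, so $\pi(a)\neq\pi(a')$. Consequently every fibre is contained in $B$ together with at most one extra point. Since $[-1,1]^{\alpha}$ is Dedekind complete and $A\hookrightarrow[-1,1]^{\alpha}$, the completion embeds as well (the Dedekind completion of a linear order embeds into any complete extension), so there is an increasing $\hat\iota:\hat A\hookrightarrow[-1,1]^{\alpha}$.

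For the fine coordinate I treat the fibres independently. Fix a fibre $F=\pi^{-1}(p)$; it is a suborder of $B$ with at most one extra point, so $F\setminus A\hookrightarrow[-1,1]^{\beta}$ via $g$, and the one possible extra point is a one-point extension that can be realized inside $[-1,1]^{\beta+1}=[-1,1]^{\beta}\times[-1,1]$ (the new point, and the split of the fibre into its part below and its part above that point, are accommodated using the second factor, which is possible because $[-1,1]$ is a continuum). Thus I obtain an increasing $\psi_F:F\hookrightarrow[-1,1]^{\beta+1}$, chosen separately on each fibre. Finally I set $h(c)=\big(\hat\iota(\pi(c)),\ \psi_{F_c}(c)\big)$, where $F_c$ denotes the fibre of $c$. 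If $c<c'$ lie in different fibres then $\pi(c)<\pi(c')$, so the first coordinate already increases strictly; if they lie in the same fibre the first coordinates agree and $\psi_F$ makes the second coordinate increase. Hence $h$ is increasing and injective, giving $\idim(A\cup B)\le\alpha+\beta+1<\omega_1$.

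The step I expect to be the main obstacle is recognizing that the naive idea — sending $c$ to the pair consisting of its cut in $A$ and its cut in $B$ — fails: internal cuts do not record the relative order of an $A$-point and a $B$-point, and the map collapses already for $A=\{0\}$, $B=\{1\}$, whose completions are single points. The remedy is precisely the asymmetry above, using cuts of $A$ only for the coarse coordinate and the genuine points of $C$ to resolve each $A$-gap in the fine coordinate. The two points where Theorem 6 is essential are the embedding of the completion $\hat A$ back into $[-1,1]^{\alpha}$ (which needs Dedekind completeness) and the realization of the one-point fibre extensions in $[-1,1]^{\beta+1}$ (which needs that the interval, and hence the power, is a continuum).
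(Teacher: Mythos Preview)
Your proof is correct and follows essentially the same strategy as the paper: a coarse coordinate recording position relative to $A$, followed by a fine coordinate resolving each $A$-gap using the embedding of $B$. The only cosmetic difference is packaging: the paper first reduces to disjoint $A,B$, picks a selector for the convex equivalence classes on $B$, and spends the extra coordinate on the coarse side (landing in $I^{\alpha+1+\beta}$), whereas you keep the overlap, use the Dedekind completion $\hat A\hookrightarrow[-1,1]^{\alpha}$ for the coarse map, and spend the extra coordinate on the fine side to absorb the single $A$-point in each fibre (landing in $[-1,1]^{\alpha+\beta+1}$).
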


\begin{proof}
	Because $A\cup B=(A\setminus B) \, \cup (A\cap B) \, \cup (B \setminus A)$, and the sum is clearly disjoint, we can restrict ourselves to the case when $A$ and $B$ are disjoint. For $b_0,b_1 \in B$ we set $b_0 \sim b_1$ iff $[b_0,b_1]\cap A =\emptyset$ ($[b_0,b_1]$ actually means $[\min\{b_0,b_1\},\max\{b_0,b_1\}]$, but this is a harmless abuse of notation, and we won't bother with it anymore).
	This is a convex equivalence relation on $B$.  Let $i_A:A\rightarrow I^\alpha$, and $i_B:B\rightarrow I^\beta$ be embeddings. We aim to extend $i_A$ to $\tilde{i}:A\cup B\rightarrow I^{\alpha+1+\beta}$. It is sufficient to define $\tilde{i}$ on each equivalence class separately. Let $S\subseteq B$ be a selector of $\sim$. For $s\in S$ we choose $i'(s) \in I^{\alpha+1}$, so that 
	$$\{i_A(a)|\, a<s\}<\{i'(s)\}<\{i_A(a)|\, s<a\}.$$ 
	Keeping in mind that $I^\alpha$ is a linearly ordered continuum of countable character, it requires only standard verifiation, that this can be done, and it is left to the Reader. Let $i'(a)=i_A(a)$ for $a\in A$. Then, having $i':A\cup S \rightarrow I^{\alpha+1}$, we extend it to $\tilde{i}:A\cup B \rightarrow I^{\alpha+1+\beta}$, via the formula
	$$\tilde{i}(b)=i'(s_b)^\frown i_B(b),$$
	for $s_b\sim b$, and $s_b\in S$.
\end{proof}

\begin{thm}[Novotn\'y \cite{novotny}, Nov\'ak \cite{novak}] Let $L$ be a linearly ordered continuum. Then for any ordinal $\alpha$, $\ldim{L^\alpha}=\alpha$.
\end{thm}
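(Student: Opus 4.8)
The inequality $\ldim{L^\alpha}\le\alpha$ is free, since the identity map witnesses $L^\alpha\hookrightarrow L^\alpha$; the whole problem is the lower bound, namely that $L^\alpha\not\hookrightarrow L^\beta$ for every $\beta<\alpha$. I would first reduce this to a single successor instance. Grouping coordinates shows that an iterated lexicographic power factors as $L^{\beta+\gamma}\cong L^\beta\times L^\gamma$, with the $L^\beta$-block more significant; padding with $\min L$ on the coordinates $[\beta+1,\alpha)$ gives $L^{\beta+1}\hookrightarrow L^\alpha$ whenever $\beta<\alpha$. Consequently a hypothetical embedding $L^\alpha\hookrightarrow L^\beta$ would compose to $L^{\beta+1}\hookrightarrow L^\beta$, so it suffices to prove, for every ordinal $\beta$, that $L^{\beta+1}\not\hookrightarrow L^\beta$, which I would attempt by transfinite induction on $\beta$ (the base case $L\not\hookrightarrow L^0=\{\ast\}$ being clear, as $|L|\ge2$).

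The naive strategy is to fiber a putative $g:L^{\beta+1}\hookrightarrow L^\beta$ over a coordinate projection $\pi:L^\beta\to L^{\beta'}$: each fiber $\{z:\pi(g(z))=t\}$ is convex and, being carried by $g$ into a copy of $L^{\beta'}$, has dimension $\le\beta'$, so $L^{\beta+1}$ is a convex, $L^{\beta'}$-indexed sum of low-dimensional pieces. The trouble is that this merely reproduces the bound one started with: if $X=\sum_{t\in J}F_t$ is a convex sum with $J\hookrightarrow K$ and $\ldim{F_t}\le\gamma$, then $x\mapsto(i(t),i_t(x))$ embeds $X$ into $L^{\ldim{K}+\gamma}$, so the fibering is exactly tight and yields no contradiction. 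One therefore needs a genuine embedding-monotone invariant rather than a reconstruction. A promising engine is a convex ``dimension-drop'' quotient: for an additively indecomposable bound $\delta$ the relation $x\approx y\iff\ldim{[x,y]}<\delta$ is, by the closure property behind Proposition 4, a convex equivalence relation, and the quotient $\partial_\delta X=X/\!\approx$ is \emph{monotone under embeddings}. Indeed, if $f:X\hookrightarrow Y$ and $[x]\ne[x']$, then $[f(x),f(x')]_Y\supseteq f([x,x']_X)\cong[x,x']_X$ forces $\ldim{[f(x),f(x')]_Y}\ge\ldim{[x,x']_X}\ge\delta$ by Proposition 3, so distinct classes of $X$ meet distinct classes of $Y$; selecting one target class per source class gives an order embedding $\partial_\delta X\hookrightarrow\partial_\delta Y$.

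The main obstacle is that the theorem demands sensitivity to a single successor step, which no such coarse invariant detects: for indecomposable $\delta$ the ``$+1$'' is absorbed into the discarded tail, so $\partial_\delta L^{\beta+1}\cong\partial_\delta L^{\beta}$, while the only threshold that separates $L^{\beta+1}$ from $L^\beta$, namely $\ldim{[x,y]}\le1$, fails to be transitive for a general continuum (for instance $L+L\not\hookrightarrow L$ for the closed long ray, since $\omega_1\cdot2$ embeds in $L+L$ but not in $L$). Worse, the features that do separate consecutive powers are genuinely $L$-dependent: for $L=\mathbb R$ it is a nested cellularity rank (Proposition 2 forbids long well-ordered chains), whereas for the long ray it is the supremum of order types of well-ordered subobjects (cellularity being useless there). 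I expect the correct uniform device to be a Hausdorff-style classification of cuts by their character $(\operatorname{cf},\operatorname{ci})$, refined to record at which coordinate a cut is ``resolved''; the hard part of the induction—and the place where I would spend the most care—is the limit stage, showing that the fresh least-significant coordinate of $L^{\beta+1}$ produces a cut-character pattern provably unrealizable in $L^\beta$, uniformly over all linearly ordered continua $L$.
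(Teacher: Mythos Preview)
Your reduction to the successor instance $L^{\beta+1}\not\hookrightarrow L^\beta$ is exactly right, and the paper makes the same move. But from there your proposal is not a proof: you explore several candidate invariants (dimension-drop quotients, cut-character patterns), correctly diagnose why each falls short, and end with a research plan (``I expect the correct uniform device to be\ldots'', ``the place where I would spend the most care is the limit stage''). The gap is that you never find the device.

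The missing idea is short and completely uniform in $L$. By Nov\'ak's theorem, $X:=L^\beta$ is itself a linearly ordered continuum. If $g:L^\beta\times L\hookrightarrow L^\beta$ existed, then for each $x\in L^\beta$ the set $J_x:=[\,g(x,\min L),\,g(x,\max L)\,]$ would be a closed, non-degenerate interval in $X$; for $x\neq x'$ the intervals $J_x,J_{x'}$ would be disjoint (all of $\{x\}\times L$ lies on one side of all of $\{x'\}\times L$); and $x\mapsto J_x$ would be an order embedding of $X$ into the family $\mathcal A=\{J_x:x\in X\}$ with the induced order. So what one actually needs is: \emph{a linearly ordered continuum $X$ does not embed into any disjoint family of its own non-degenerate closed intervals}.

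This is precisely Novotn\'y's lemma, and its proof is a one-paragraph fixed-point argument rather than a transfinite induction: given such an embedding $i:X\hookrightarrow\mathcal A$, set $A=\{x\in X:x<\min i(x)\}$ (nonempty, since $\min X\in A$), let $a=\sup A$ (exists by completeness), and use density to derive a contradiction in each of the cases $a\in A$, $a\notin A$. No cofinality bookkeeping, no induction on $\beta$, no limit stage --- and in particular the argument is literally identical for $L=[0,1]$, the closed long line, or any other continuum, so the ``$L$-dependence'' you were worried about never materialises.

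In summary, the paper's proof is: (i) your reduction; (ii) Nov\'ak's theorem that $L^\beta$ is a continuum; (iii) Novotn\'y's lemma applied to $X=L^\beta$. Your plan to classify cuts through a transfinite induction might eventually rediscover this in disguise, but as written it is not yet a proof, and the intended route is far more direct.
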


Once we are familiar with the theorem of Nov\'ak, this result is an immediate consequence of a Lemma due to Novotn\'y, which has quite pleasant proof. Since it was originally published in Czech, we quote it here.
	
	\begin{lem}[Novotn\'y, \cite{novotny}]
		Let $(X,\le)$ be a linearly ordered continuum, and $\mathcal{A}$ be a disjoint family of closed, not-single-point intervals in $X$. The relation $\le$ induces a linear order $\tilde{\le}$ on $\mathcal{A}$, via the formula
		$I\tilde{\le}J$ iff $\max{I}<\min{J}$. Then $(X,\le)$ does not embed into $(\mathcal{A},\tilde{\le})$.
	\end{lem}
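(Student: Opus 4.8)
The plan is to argue by contradiction, distilling from a hypothetical embedding a strictly increasing self-map of $X$ and then locating a fixed point whose associated interval cannot exist.

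Suppose $\phi\colon X\to\mathcal{A}$ is an increasing injection, and write $\phi(x)=[a_x,b_x]$ with $a_x<b_x$ (the endpoints are distinct since the intervals are not single points). Since distinct members of $\mathcal{A}$ are disjoint and $\tilde{\le}$ is defined through $\max$ and $\min$, for $x<y$ we get $\phi(x)\,\tilde{<}\,\phi(y)$, which forces $b_x=\max\phi(x)<\min\phi(y)=a_y$; in particular $a_x<a_y$. First I would record the two consequences I will use repeatedly: the left-endpoint map $\alpha\colon X\to X$, $\alpha(x)=a_x$, is strictly increasing, and $x<y$ implies $b_x<a_y$.

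Next I would exploit that a linearly ordered continuum is a complete lattice (compactness supplies all suprema and infima as well as both endpoints) to manufacture a fixed point of $\alpha$. Concretely, set $P=\{x\in X: x\le\alpha(x)\}$; it is nonempty because it contains $\min X$, so $x_*:=\sup P$ exists. A short monotonicity argument shows $x_*$ is the greatest fixed point: every $x\in P$ satisfies $x\le\alpha(x)\le\alpha(x_*)$, so $\alpha(x_*)$ bounds $P$ and hence $x_*\le\alpha(x_*)$; applying $\alpha$ once more places $\alpha(x_*)$ into $P$, giving $\alpha(x_*)\le x_*$, and therefore $\alpha(x_*)=x_*$. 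The very definition of $x_*$ also yields the second property I need: if $x>x_*$ then $x\notin P$, that is, $\alpha(x)=a_x<x$.

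Finally I would extract the contradiction from the nondegeneracy of $\phi(x_*)$. Since $a_{x_*}=x_*<b_{x_*}$, density of $X$ produces a point $w$ with $x_*<w<b_{x_*}$. On one hand $w>x_*$ gives $a_w<w$ by the property above; on the other hand $x_*<w$ gives $b_{x_*}<a_w$, so that $w<b_{x_*}<a_w$ and thus $w<a_w$. These two inequalities are incompatible, which finishes the proof. I expect the only genuine subtlety to be the middle step: recognizing that completeness of $X$ is exactly the ingredient needed to produce a fixed point of the left-endpoint map, and verifying the two fixed-point properties by hand so that no external theorem need be cited; once $x_*$ is in place, the nondegeneracy of its own interval closes the argument at once.
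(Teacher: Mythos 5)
Your proof is correct, and every step checks out: strict monotonicity of the left-endpoint map $\alpha(x)=a_x$ and the implication $x<y\Rightarrow b_x<a_y$ follow from disjointness of $\mathcal{A}$; $\min X\in P$ is immediate; the Knaster--Tarski-style verification that $x_*=\sup P$ is a fixed point of $\alpha$ is complete; and the final contradiction $w<b_{x_*}<a_w<w$ is valid. Your route is a genuinely different packaging of the same germ as the paper's. The paper sets $A=\{x:\ x<\min i(x)\}$ (nearly your $P$, but with strict inequality), takes $a=\sup A$, and argues by a direct dichotomy on whether $a\in A$, in each case exhibiting a point near $a$ or inside $i(a)$ whose membership (or non-membership) in $A$ contradicts $a=\sup A$. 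You instead isolate a reusable principle --- an increasing self-map of a complete linear order has a fixed point --- apply it to $\alpha$, and then the non-degeneracy of $\phi(x_*)$ together with density of $X$ kills the fixed point in one stroke. What your decomposition buys is the disappearance of case analysis and a clean separation of the three ingredients (completeness for the fixed point, density and non-degeneracy for the contradiction); what the paper's buys is that it never needs an exact fixed point, only the supremum, so it reads as a single self-contained computation. Both arguments ultimately consume the same hypotheses, so neither is more general than the other.
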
	
	\begin{proof}
	Suppose otherwise, that $i:(X,\le)\rightarrow (\mathcal{A},\tilde{\le})$ is an embedding. 
	Let $A=\{x\in X|\;  x<\min{i(x)} \}$. $A$ is nonempty, because $X$ has the least element, so let $a=\sup{A}$. We have two cases.
	\begin{itemize}
		\item[a)] Assume $a\in A$. Then $a<\min{i(a)}$, but for any $a'\in i(a)$ this is the case as well. Since $a'\in i(a)$ is greater than $a$, this contradicts the definition of $a$.
		\item[b)] Assume $a\notin A$. Then $a\ge \min{i(a)}$. If $a<\max{i(a)}$, then let $a' \in (a,\max{i(a)})$. Then $a'<\max{i(a)}<\min{i(a')}$, so $a'\in A$, and this is a contradiction. The only way out is $a\ge\max{i(a)}$. But notice, that in this case for any $a' \in (\min{i(a)},a)$, we have $a' \notin A$. Therefore $\sup{A}\le \min{i(a)}<a$, which is again a contradiction.
	\end{itemize}
\end{proof}

\begin{cor}
	If $\alpha$ is an ordinal with the property, that $\omega\cdot \alpha=\alpha$, \\then $\idim{2^\alpha}=\alpha$.
\end{cor}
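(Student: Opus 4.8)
The plan is to prove the two inequalities $\idim{2^\alpha}\le\alpha$ and $\idim{2^\alpha}\ge\alpha$ separately, writing $I$ for a fixed linearly ordered continuum (the unit interval), so that $\idim{X}$ is by definition the least $\beta$ with $X\hookrightarrow I^\beta$ (Proposition 3 and the remark following it, which records that $\operatorname{I-dim}$, $\operatorname{2^\omega-dim}$ and $\operatorname{\mathbb{R}-dim}$ coincide).

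For the upper bound, which needs no hypothesis on $\alpha$, I would use that the two-element order $2$ embeds into the continuum $I$, say via $\phi$ with $\phi(0)<\phi(1)$. Applying $\phi$ coordinatewise, $\Phi(x)(\xi)=\phi(x(\xi))$, gives an increasing map $2^\alpha\hookrightarrow I^\alpha$: if $x<y$ and $\gamma$ is the least coordinate where they differ, then $x$ and $y$ agree below $\gamma$ while $\Phi(x)(\gamma)=\phi(0)<\phi(1)=\Phi(y)(\gamma)$, so $\Phi(x)<\Phi(y)$ in the lexicographic order. Hence $\idim{2^\alpha}\le\alpha$.

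The content is in the lower bound, and this is where $\omega\cdot\alpha=\alpha$ is used. The idea is to sandwich a \emph{continuum} power into $2^\alpha$ and then appeal to Novotn\'y--Nov\'ak. Since $I\hookrightarrow 2^\omega$ --- this mutual embeddability is precisely what makes $\operatorname{I-dim}$ and $\operatorname{2^\omega-dim}$ agree --- the same coordinatewise construction yields $I^\alpha\hookrightarrow(2^\omega)^\alpha$. I would then invoke the lexicographic-power identity $(2^\omega)^\alpha\cong 2^{\omega\cdot\alpha}$ together with the hypothesis to obtain $I^\alpha\hookrightarrow 2^{\omega\cdot\alpha}=2^\alpha$. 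Now suppose toward a contradiction that $2^\alpha\hookrightarrow I^\beta$ for some $\beta<\alpha$; composing the embeddings gives $I^\alpha\hookrightarrow I^\beta$. But $I$ is a continuum, so Theorem 8 applied to $L=I$ says that $I^\alpha$ does not embed into $I^\beta$ for any $\beta<\alpha$, a contradiction. Therefore $\idim{2^\alpha}\ge\alpha$, and with the upper bound we get $\idim{2^\alpha}=\alpha$.

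I expect the only genuinely technical points to be the isomorphism $(2^\omega)^\alpha\cong 2^{\omega\cdot\alpha}$ and the coordinatewise embedding lemma $L\hookrightarrow M\Rightarrow L^\gamma\hookrightarrow M^\gamma$, both of which are routine once the lexicographic order is unwound: in $(2^\omega)^\alpha$ the $\alpha$-many blocks of length $\omega$ are laid end to end, so locating ``the first block in which two elements differ and then the first differing coordinate inside it'' is literally locating the first differing coordinate in the index order type $\omega\cdot\alpha$ (namely $\alpha$ copies of $\omega$). I would record these as one or two short observations rather than dwelling on them, since the conceptual step --- reducing the discrete power $2^\alpha$ to a continuum power through the absorption $\omega\cdot\alpha=\alpha$, so that Theorem 8 applies --- is the whole point.
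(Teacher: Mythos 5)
Your proof is correct and takes essentially the same route as the paper: both hinge on the identity $(2^\omega)^\alpha\simeq 2^{\omega\cdot\alpha}=2^\alpha$ together with the mutual embeddability of $2^\omega$ and $I$, and the lower bound in both cases comes from $I^\alpha\hookrightarrow 2^\alpha$ combined with the Novotn\'y--Nov\'ak theorem (Theorem 8). The only difference is cosmetic: for the upper bound you embed $2\hookrightarrow I$ coordinatewise (correctly noting no hypothesis on $\alpha$ is needed there), whereas the paper writes $2^\alpha\simeq(2^\omega)^\alpha\hookrightarrow I^\alpha$, and you phrase the lower bound as a contradiction rather than via the monotonicity of $\idim$ from Proposition 3.
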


\begin{proof}
	Clearly $2^\omega \hookrightarrow I$, so 
	$$2^\alpha \simeq 2^{\omega\cdot\alpha} \simeq (2^\omega)^\alpha \hookrightarrow I^\alpha$$
	In the other direction,
	$$I^\alpha\hookrightarrow (2^\omega)^\alpha\simeq 2^{\omega\cdot\alpha}
	\simeq 2^\alpha.$$
\end{proof}

\begin{thm} Assume $\mathfrak{c}=2^{\omega_1}$. Let $X=I^{\omega_1}$. Then $\mathbb{L}^{\omega_1}_{(X,0)}$ is a countably saturated linear order of cardinality $\mathfrak{c}$, character $\omega_1$, and $\idim=\omega_1^2$. In particular 
$\mathbb{L}^{\omega_1}_{(X,0)}$ is not isomorphic to $\Lo$.
\end{thm}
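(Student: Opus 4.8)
The plan is to verify each of the four claims about $\mathbb{L}^{\omega_1}_{(X,0)}$ where $X = I^{\omega_1}$, namely that it is countably saturated, of cardinality $\mathfrak{c}$, of character $\omega_1$, and of I-dimension $\omega_1^2$. Countable saturation is immediate from Theorem 5, since $X = I^{\omega_1}$ is a compact line (by Nov\'ak's Theorem 6, as $I$ is a compact line) and $0$ is neither the least nor greatest element of $X$; thus the general construction applies verbatim. For cardinality, each element of $\mathbb{L}^{\omega_1}_{(X,0)}$ is a function $\omega_1 \to X$ with countable support, and $|X| = |I^{\omega_1}| = \mathfrak{c}^{\omega_1}$; under the assumption $\mathfrak{c} = 2^{\omega_1}$ one computes that the number of countable partial functions into $X$ is again $\mathfrak{c}$, so the cardinality is exactly $\mathfrak{c}$.

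For the character claim, I would argue that $\mathbb{L}^{\omega_1}_{(X,0)}$ has uncountable character by exhibiting a single element that cannot be pinned down by countable sets on both sides, and then invoke Proposition 1 together with the fact that $\omega_1 \hookrightarrow \mathbb{L}^{\omega_1}_{(X,0)}$. The key point is that $X = I^{\omega_1}$ itself has uncountable character (since by Novotn\'y--Nov\'ak, $\idim X = \omega_1$, so $X$ contains a copy of $\omega_1$ or $\omega_1^*$ by an argument analogous to Proposition 1), and this uncountable-character behaviour of the fibre $X$ gets inherited by $\mathbb{L}^{\omega_1}_{(X,0)}$ at coordinates where the support allows an entire copy of $X$ to sit cofinally or coinitially around a point. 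Contrast this with $\Lo$, whose fibre is the countable-character order $[-1,1]$, giving character $\omega_1$ but for a structurally different reason; I would make sure the character is exactly $\omega_1$ and not larger by producing, for each point, witnessing sets of size $\omega_1$.

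The main content, and the step I expect to be the hardest, is computing $\idim \mathbb{L}^{\omega_1}_{(X,0)} = \omega_1^2$. For the upper bound $\idim \le \omega_1^2$, I would use that $\mathbb{L}^{\omega_1}_{(X,0)} \hookrightarrow X^{\omega_1} = (I^{\omega_1})^{\omega_1} \simeq I^{\omega_1 \cdot \omega_1} = I^{\omega_1^2}$, where the crucial embedding of the countable-support product into the full lexicographic power $X^{\omega_1}$ needs to be checked to be order-preserving (it is, since the lexicographic comparison of two elements of $\mathbb{L}^{\omega_1}_{(X,0)}$ is decided at the first coordinate of disagreement, exactly as in $X^{\omega_1}$), and then $\ldim$ monotonicity from Proposition 3 together with the ordinal arithmetic identity $\omega_1 \cdot \omega_1 = \omega_1^2$ finishes it. For the lower bound $\idim \ge \omega_1^2$, I would show $I^{\omega_1^2} \hookrightarrow \mathbb{L}^{\omega_1}_{(X,0)}$ and then apply Theorem (Novotn\'y--Nov\'ak) giving $\idim I^{\omega_1^2} = \omega_1^2$ together with the monotonicity of $\idim$ under embeddings.

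The genuine obstacle is the lower bound embedding $I^{\omega_1^2} \hookrightarrow \mathbb{L}^{\omega_1}_{(X,0)}$: an element of $I^{\omega_1^2}$ is a function on $\omega_1^2 \simeq \omega_1 \times \omega_1$ with no support restriction, whereas the target only admits countably-supported functions $\omega_1 \to X$. The idea is to restrict to a large enough sub-continuum of $I^{\omega_1^2}$ that does embed, or rather to build the embedding by recognizing that $X = I^{\omega_1}$ has I-dimension exactly $\omega_1$ and then stacking $\omega_1$ copies of it with the countable-support constraint only mildly restricting the well-ordered sequences one needs to realize; concretely, I would reduce the lower bound to exhibiting for each $\beta < \omega_1^2$ a copy of $I^\beta$ and verifying these cohere, using that any embedded copy of $I^\beta$ for countable-enough $\beta$ touches only countably many of the $\omega_1$ outer coordinates and hence lands inside the countable-support set. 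Establishing that this coherent stacking actually reaches the full height $\omega_1^2$ — rather than stalling at some smaller ordinal forced by the support condition — is where the careful ordinal bookkeeping lies, and is the step I would write out in full detail while leaving the continuum-theoretic interval placements to the Reader as in Proposition 5.
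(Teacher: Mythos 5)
Your verification of countable saturation (via Theorem 5), the cardinality computation under $\mathfrak{c}=2^{\omega_1}$, and the upper bound $\idim \mathbb{L}^{\omega_1}_{(X,0)} \le \omega_1^2$ via the inclusion $\mathbb{L}^{\omega_1}_{(X,0)} \subseteq X^{\omega_1} \simeq I^{\omega_1\cdot\omega_1}$ are all fine and agree with the paper. The genuine gap is in the lower bound, which you reduce to the claim $I^{\omega_1^2} \hookrightarrow \mathbb{L}^{\omega_1}_{(X,0)}$. That reduction is both unnecessary and, along the route you sketch, impossible. Unnecessary: since $\idim$ is a single ordinal, to get $\idim \ge \omega_1^2$ it suffices to embed $I^{\omega_1\cdot\alpha}$ for each $\alpha<\omega_1$ \emph{separately}; no coherence between the copies is needed, because $\idim \mathbb{L}^{\omega_1}_{(X,0)} \ge \idim I^{\omega_1\cdot\alpha} = \omega_1\cdot\alpha$ (Novotn\'y--Nov\'ak plus monotonicity) for every $\alpha<\omega_1$ already forces $\idim \ge \sup_{\alpha<\omega_1}\omega_1\cdot\alpha=\omega_1^2$. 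Such copies are trivially present: the functions supported on the first $\alpha$ of the $\omega_1$ coordinates form a copy of $X^\alpha=I^{\omega_1\cdot\alpha}$, and these have countable support because $\alpha<\omega_1$. This is exactly the paper's (one-paragraph) proof, and it is where your ``hardest step'' evaporates. Impossible: your plan to reach a full copy of $I^{\omega_1^2}$ by coherently stacking copies of $I^{\beta}$, $\beta<\omega_1^2$, cannot succeed, because an increasing union of copies supported on initial segments consists only of elements appearing at some bounded stage. Concretely, under the identification $I^{\omega_1^2}\simeq X^{\omega_1}$, the elements of $I^{\omega_1^2}$ whose support is contained in a proper initial segment of $\omega_1^2$ are precisely the elements of $\mathbb{L}^{\omega_1}_{(X,0)}$ (a countable set of blocks is bounded in $\omega_1$), while $I^{\omega_1^2}$ contains elements of unbounded support, such as the constant function $1$. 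So the coherent union can only ever reproduce $\mathbb{L}^{\omega_1}_{(X,0)}$ inside itself; whether the full power $I^{\omega_1^2}$ embeds into $\mathbb{L}^{\omega_1}_{(X,0)}$ at all is a strictly stronger, unestablished claim, and nothing in the theorem requires it.

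A secondary issue: the character argument is run backwards. Proposition 1 states that uncountable character implies a copy of $\omega_1$ or $\omega_1^*$; the converse fails (the ordinal $\omega_1$ contains itself but has countable character at every point), so quoting Proposition 1 together with $\omega_1\hookrightarrow \mathbb{L}^{\omega_1}_{(X,0)}$ proves nothing about character. The correct route: every point of a countably saturated order has uncountable character (given countable $A<\{l\}<B$, saturation produces a second point between $A$ and $B$, so $A,B$ cannot pin $l$ down), which gives character $\ge\omega_1$; and for each $x$ with support contained in $\beta<\omega_1$, the two $\omega_1$-sequences obtained by modifying $x$ at a coordinate $\delta\ge\beta$ downward, respectively upward, pin $x$ down, giving character $\le\omega_1$. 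Note also that character cannot be what distinguishes the order from $\Lo$, since both have character $\omega_1$; the non-isomorphism must be drawn, as in the paper, from the isomorphism invariant $\idim$, using $\idim\Lo=\omega_1<\omega_1^2$.
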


\begin{proof}
	For every ordinal $\alpha<\omega_1$, $\mathbb{L}^{\omega_1}_{(X,0)}$ contains a copy of $X^\alpha$. $X^{\alpha}$ is a linearly ordered continuum, so $\idim{X^\alpha}= \idim{I^{\omega_1\cdot\alpha}}=\omega_1\cdot\alpha$. Since $\alpha$ was arbitrary, this shows that $\idim{\mathbb{L}^{\omega_1}_{(X,0)}} \ge \omega_1^2$.
	\\ In the other direction, $\mathbb{L}^{\omega_1}_{(X,0)}\hookrightarrow X^{\omega_1} \hookrightarrow I^{\omega_1\cdot\omega_1}$, so $\idim{\mathbb{L}^{\omega_1}_{(X,0)}}=\omega_1^2$.
\end{proof}

\subsection{Uniqueness of the prime countably saturated linear order}

The assumption of CH is not needed if we take into account only linear orders which are in some sense minimal.

\begin{thm}[Harzheim, \cite{harz}]
	All prime countably saturated linear orders are isomorphic.
\end{thm}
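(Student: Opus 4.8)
The plan is to build an isomorphism by a back-and-forth of length $\omega_1$, but stratified by I-dimension rather than by single elements. A naive element-by-element back-and-forth fails once $\mathfrak{c}>\omega_1$: a cut determined by an uncountable subset of a countably saturated order need not be realized, so we cannot simply enumerate the two orders in type $\mathfrak{c}$ and extend one point at a time. Instead I exploit the fact that a prime order embeds into the concrete order $\Lo=\bigcup_{\alpha<\omega_1}[-1,1]^\alpha$, whose pieces have countable I-dimension. Given two prime countably saturated orders $L_1,L_2$, fix by primeness embeddings $L_i\hookrightarrow\Lo$ and set $A^i_\alpha=L_i\cap[-1,1]^\alpha$; these form increasing, continuous chains with $\bigcup_{\alpha<\omega_1}A^i_\alpha=L_i$, and since $[-1,1]^\alpha\cong I^\alpha$ we have $\idim A^i_\alpha\le\alpha<\omega_1$.

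The engine is the following \emph{Extension Lemma}: if $X$ is countably saturated and $A\subseteq A'$ with $\idim A'<\omega_1$, then every embedding $A\hookrightarrow X$ extends to an embedding $A'\hookrightarrow X$; crucially, the cardinality of $A'$ is irrelevant. By Proposition 2 a set of countable I-dimension contains no copy of $\omega_1$ or $\omega_1^*$, so by the argument behind Proposition 1 every Dedekind cut of $A'$ has countable cofinality from below and countable coinitiality from above. Consequently, for any subset $A\subseteq A'$ and any $p\in A'\setminus A$, the cut that $p$ induces on $A$ is again countably cofinal and countably coinitial. One may therefore add the points of $A'\setminus A$ to the domain one at a time, by a transfinite recursion of arbitrary length: at each step the relevant cut is witnessed by a countable increasing sequence below and a countable decreasing sequence above, and the four conditions of Lemma 1 (density, absence of countable suprema and infima, absence of $(\omega,\omega)$-gaps) produce a point of $X$ realizing it. Thus countable saturation alone suffices to extend along any inclusion of sets of countable I-dimension.

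With the Extension Lemma in hand the back-and-forth is routine. I construct an increasing, continuous chain of partial isomorphisms $\phi_\alpha\colon C_\alpha\to D_\alpha$ with $C_\alpha\subseteq L_1$, $D_\alpha\subseteq L_2$ of countable I-dimension, arranging $A^1_\alpha\subseteq C_{\alpha+1}$ and $A^2_\alpha\subseteq D_{\alpha+1}$. At a successor I first apply the Extension Lemma with $X=L_2$ to enlarge the domain of $\phi_\alpha$ to $C_\alpha\cup A^1_\alpha$ (of countable I-dimension by Proposition 4), and then with $X=L_1$ to enlarge the range to include $A^2_\alpha$; Proposition 4 keeps both sides of countable I-dimension. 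For the limit stages I need that a countable increasing union of sets of countable I-dimension again has countable I-dimension: this follows by iterating Proposition 4 coherently, since its construction \emph{extends} a given embedding, so composing the resulting maps $I^{\delta_n}\hookrightarrow I^{\delta_{n+1}}$ by appending the least coordinate yields one embedding into $I^{\sup_n\delta_n}$ with $\sup_n\delta_n<\omega_1$. Taking unions at limits, $\phi=\bigcup_{\alpha<\omega_1}\phi_\alpha$ has domain containing $\bigcup_\alpha A^1_\alpha=L_1$ and range containing $\bigcup_\alpha A^2_\alpha=L_2$, hence is the desired isomorphism.

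I expect the Extension Lemma to be the crux, as it is exactly where the absence of CH is absorbed. The decisive observation is that within a set of countable I-dimension every cut is countably cofinal (Proposition 2), so the uncountable cardinality of the pieces never forces us to realize a genuinely uncountable cut, and ordinary countable saturation (Lemma 1) does all the work; the length-$\omega_1$ stratification by I-dimension is precisely what reduces the global, $\mathfrak{c}$-sized problem to countably many countable tasks. The only other point requiring care is coherence at limit stages, which the append-least-coordinate maps above handle.
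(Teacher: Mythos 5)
Your proof is correct, and it shares the paper's skeleton---stratify both orders into an $\omega_1$-chain of pieces of countable $\idim$, prove an extension lemma for embeddings of such pieces into a countably saturated order, and run a back-and-forth of length $\omega_1$---but it resolves the crucial limit-stage problem by a genuinely different device. Your Extension Lemma is exactly part (a) of the paper's Lemma on Bounded Injectivity; your one-point-at-a-time transfinite recursion replaces the paper's argument (which maps each convex class of $B\setminus A$ onto a copy of itself inside a countably saturated interval of $L$), and both rest on the same observation that countable $\idim$ forces every cut to have countable character. The divergence is at limit stages. The paper never proves that a countable increasing union of sets of countable $\idim$ has countable $\idim$; instead it introduces patching and filtrations (Definitions 3 and 4, Lemma 5, Proposition 5) and proves the \emph{bounded} form (b) of the injectivity lemma, via the Sublemma embedding $2^\gamma$ into intervals of filtration pieces, so that all images of the back-and-forth stay inside filtration pieces $L_\beta$, whose $\idim$ is countable by definition. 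You instead prove the union statement directly, by making the construction of Proposition 4 coherent: since that construction extends a given embedding modulo padding by a fixed constant, the maps $I^{\gamma_n}\hookrightarrow I^{\gamma_{n+1}}$ form a direct system whose limit embeds $\bigcup_n C_n$ into $I^{\sup_n\gamma_n}$, and $\sup_n\gamma_n<\omega_1$. This works and is more economical: for the purposes of Theorem 12 alone it makes Definitions 3--4, Lemma 5, the Sublemma, and Proposition 5 unnecessary, whereas the paper's filtration machinery carries independent payoff (the characterization of primeness via filtrations, Propositions 6--7, and the Fra\"iss\'e-limit remark).

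Two small repairs. First, the padding constant must be a non-extreme element of the interval (say $0\in[-1,1]$), not ``the least coordinate'': if you pad with the minimum, the separating point $i'(s)$ in Proposition 4 need not exist, e.g.\ when $\sup\{i_A(a)\mid a<s\}$ is not attained in the lower set but equals $i_A(a_1)$ for some $a_1>s$. Second, your claim that the chains $A^i_\alpha=L_i\cap[-1,1]^\alpha$ are continuous is false at countable limit ordinals (an element of $\Lo$ may have support cofinal in such an ordinal), but this is harmless: your construction uses only that the chains are increasing with union $L_i$.
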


The order type of the unique prime countably saturated order is in fact the order type $h_1$ in the language of \cite{harz}. Theorem 12 follows from Theorems 3.13 and 3.17 there. We give a proof in a slightly different language.

\begin{defin}
	For two linear orders $A\subseteq B$, we will say, that $B$ \textit{patches} $A$, if for any two countable sets $a_0, a_1 \subseteq A$, if $a_0<a_1$, there exists $b\in B$, such that $a_0<\{b\}<a_1$.
\end{defin}

One can easily verify, that this comes down to patching gaps of four different types: $(1,1),\;(\omega,1),\;(1,\omega),$ and $(\omega,\omega)$, which correspond to situations, where respectively
\begin{itemize}
	\item $a_0$ has the greatest element, and $a_1$ has the least element,
	\item $a_0$ doesn't have the greatest element, but $a_1$ has the least element,
	\item $a_0$ has the greatest element, but $a_1$ doesn't have the least element,
	\item neither $a_0$ has the greatest element, nor $a_1$ has the least element.
\end{itemize}

\begin{lem}
	If $\idim{A} < \omega_1$, then there exists $B\supseteq A$ patching $A$, such that $\idim{B}<\omega_1$.
\end{lem}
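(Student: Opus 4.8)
The plan is to fix a countable ordinal $\alpha$ together with an embedding $i\colon A\hookrightarrow I^\alpha$ witnessing $\idim A\le\alpha<\omega_1$ (take $I=[-1,1]$, with distinguished points $-1,0,1$), and to realise $B$ as a subset of the one-step-larger power $I^{\alpha+1}$. Writing $x^\frown c\in I^{\alpha+1}$ for the sequence extending $x\in I^\alpha$ by the value $c$ at coordinate $\alpha$, I identify $I^\alpha$ with the level $\{x^\frown 0:x\in I^\alpha\}\subseteq I^{\alpha+1}$ and regard $A$ as the suborder $\{i(a)^\frown 0: a\in A\}$. The whole point of this setup is that if I can exhibit, for each gap of $A$, a separating point \emph{inside} $I^{\alpha+1}$, then the resulting $B$ patches $A$ by construction while $B\subseteq I^{\alpha+1}$ forces $\idim B\le\alpha+1<\omega_1$ for free; no counting of the adjoined points is required, however many there are.

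So I would fix countable sets $a_0<a_1$ in $A$. By Nov\'ak's Theorem 6 the power $I^\alpha$ is a compact, dense-in-itself line, hence $s:=\sup i[a_0]$ and $t:=\inf i[a_1]$ exist in $I^\alpha$ and $s\le t$. When $s<t$, density yields $m\in I^\alpha$ with $s<m<t$; since every element of $i[a_0]$ is $\le s<m$ and every element of $i[a_1]$ is $\ge t>m$, the point $m^\frown 0\in I^{\alpha+1}$ strictly separates the two sets, and I adjoin it as the separating point for this gap. Every $(1,1)$-gap falls into this case, because $i$ is injective and so the greatest element of $a_0$ maps strictly below the least element of $a_1$.

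The remaining, and genuinely delicate, situation is the collapsed one $s=t=:v$, where the gap of $A$ shrinks to a single point of $I^\alpha$ and cannot be separated inside $I^\alpha$ at all. This is precisely the obstruction resolved in the proof of Theorem 5, where one separates an increasing sequence from its own supremum by descending into the next coordinate, and I expect this to be the main point of the argument. Here I note that $i[a_0]$ and $i[a_1]$ cannot both attain $v$, for otherwise $v$ would be simultaneously the maximum of $i[a_0]$ and the minimum of $i[a_1]$, contradicting $a_0<a_1$. Accordingly I set $b=v^\frown 0$ if neither set attains $v$, $b=v^\frown 1$ if $i[a_0]$ attains $v$, and $b=v^\frown(-1)$ if $i[a_1]$ attains $v$. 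In each subcase $i[a_0]^\frown 0<\{b\}<i[a_1]^\frown 0$ follows at once from the lexicographic order: an element differing from $v$ before coordinate $\alpha$ makes the last coordinate irrelevant, while an element equal to $v$ is separated by the chosen last coordinate.

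Finally I would let $B=\{i(a)^\frown 0: a\in A\}\cup\{\,b_{(a_0,a_1)}: a_0<a_1\ \text{countable subsets of}\ A\,\}$, where $b_{(a_0,a_1)}$ is the point selected above. Then $A$ is (isomorphic to) a suborder of $B$, $B$ patches $A$ by the two preceding paragraphs, and $B\subseteq I^{\alpha+1}$ gives $\idim B\le\alpha+1<\omega_1$, as required. All the real work sits in the collapsed-gap case; once the correct last coordinate is chosen the verifications are routine lexicographic comparisons.
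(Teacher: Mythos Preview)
Your argument is correct and follows essentially the same route as the paper: embed $A$ into $I^\alpha$, pass to $I^{\alpha+1}$, and observe that the extra coordinate resolves the gaps that $I^\alpha$ alone cannot separate. The paper is terser—it simply takes $B=I^{\alpha+1}$ and remarks that $I^\alpha$ already patches the $(1,1)$- and $(\omega,\omega)$-gaps while the added interval handles the $(\omega,1)$- and $(1,\omega)$-gaps—whereas you carry out the explicit case analysis on $\sup i[a_0]$ versus $\inf i[a_1]$; but the content is the same.
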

\begin{proof}
	Let $\alpha=\idim{A}$. Then $A \hookrightarrow I^{\alpha}$, so we can assume that $A \subseteq I^{\alpha}$. This order clearly patches gaps of the form $(1,1)$ and $(\omega,\omega)$. To take care of $(1, \omega)$- and $(\omega,1)$-gaps, we replace every point of $I^{\alpha}$ with a unit interval.
	$$A\subseteq I^{\alpha}\simeq I^{\alpha}\times\{0\}\subseteq I^{\alpha+1}.$$
\end{proof}

\begin{defin}
	If $L$ is a countably saturated linear order, we define a filtration of $L$, as a sequence of subsets of $L$, $\{L_\alpha\}_{\alpha<\omega_1}$, with the following properties:
	\begin{itemize}
		\item $\{L_\alpha\}_{\alpha<\omega_1}$ is increasing with respect to inclusion,
		\item for each $\alpha<\omega_1$, $L_{\alpha+1}$ patches $L_\alpha$,
		\item $\displaystyle{\bigcup_{\alpha<\omega_1}{L_\alpha}=L}$,
		\item for each $\alpha<\omega_1$, $\idim{L_\alpha}<\omega_1$.
	\end{itemize}
\end{defin}

The next Lemma is the key tool in the proof of Theorem 12. It should be noted, that its first part as actually a direct consequence of Theorem 3.6 p. 81 \cite{harz}.

\begin{lem}[On Bounded Injectivity]
	Assume, that $A$ and $B$ are linear orders with
	$\idim{A}, \idim{B}<\omega_1$, and $L$ is countably saturated. Let $f:A\hookrightarrow L$, and $i:A\hookrightarrow B$ be increasing functions. Then 
	\begin{enumerate}
		\item[a)] There exists an increasing mapping $\tilde{f}:A \hookrightarrow L$, such that $\widetilde{f}\circ i=f$.	
	
	\begin{tikzcd}
	A \arrow{r}{f} \arrow{d}[swap]{i} & L\\
	B \arrow[dashed]{ur}[swap]{\tilde{f}}
	\end{tikzcd}
	
	 	\item[b)] If $L$ has a filtration $\{L_\alpha\}_{\alpha<\omega_1}$, and there is an index $\alpha < \omega_1$, for which $f[A] \subseteq L_\alpha$, then we can choose $\tilde{f}$ bounded, i.e. $\tilde{f}[B]\subseteq L_\beta$, for some 
		$\beta<\omega_1$.
	\end{enumerate}
\end{lem}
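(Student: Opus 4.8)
For part a) I would reduce to one-point extensions, throughout using that an order with $\idim{}<\omega_1$ contains no copy of $\omega_1$ or $\omega_1^*$ (Proposition 2), so that every cut it carries has countable cofinality and countable coinitiality. Identifying $A$ with $i[A]\subseteq B$, I well-order $B\setminus A$ and extend $f$ one point at a time, taking unions at limit stages. When a point $b$ is added to the current domain $D$ (with $A\subseteq D\subseteq B$) it splits $D$ into $D_{<b}$ and $D_{>b}$; since no subset of $B$ admits an uncountable well-ordered or reverse-well-ordered sequence, $D_{<b}$ has a countable cofinal sequence $\{x_n\}$ and $D_{>b}$ a countable coinitial sequence $\{y_n\}$. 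Feeding the countable order $\{x_n\}\cup\{y_n\}$ and its one-point extension into the characterisation of Lemma 1 produces $z\in L$ with $\tilde f(x_n)<z<\tilde f(y_m)$ for all $n,m$; cofinality and coinitiality of the witnesses upgrade this to $\tilde f[D_{<b}]<\{z\}<\tilde f[D_{>b}]$, so $\tilde f(b):=z$ is order preserving. The four admissible gap types together with the two end cases match exactly the four clauses of Lemma 1.

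Part b) is the substantial point, because $B$ may have size $\mathfrak c$ while the filtration is indexed only by $\omega_1$; placing points one at a time, each costing a fresh level, would exhaust $\omega_1$. The mechanism I would use is that $L_{\gamma+1}$ patches $L_\gamma$ (Definition 3): every gap whose witnesses lie in $L_\gamma$ is filled inside $L_{\gamma+1}$, and crucially \emph{any number} of such gaps may be filled at the single transition $L_\gamma\to L_{\gamma+1}$. This settles the separable base case $\idim{B}\le1$: write $B\setminus A$ as its maximal pieces convex over $A$, each a separable order to be embedded into the corresponding gap of $f[A]$; a countable dense subset of each piece is placed through the levels $L_{\alpha+n}$ (landing in $L_{\alpha+\omega}$), after which every remaining point of every piece falls into a distinct sub-gap and all of them are placed simultaneously in $L_{\alpha+\omega+1}$. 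Since all pieces start from the common level $\alpha$, the whole image stays inside $L_{\alpha+\omega+1}$.

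For the general case I would induct on $\delta=\idim{B}$, proving a bound $\rho_\delta(\alpha)<\omega_1$ depending only on $\delta$ and the starting level $\alpha$. At a successor $\delta=\delta'+1$ use $I^{\delta'+1}\cong I^{\delta'}\times I$ (lexicographic) to present $B$ as a lexicographic sum of separable fibers over a coarse order of dimension $\delta'$; the coarse order is embedded by the inductive hypothesis into a fixed level $L_\gamma$, each fiber then occupies a gap of $L$ whose boundary is witnessed by the countable coarse approach-sequences inside $L_\gamma$, and is filled by the separable argument. Because every fiber departs from the \emph{same} level $\gamma$ with the same budget, all of the (possibly continuum many) fibers land in one common level $L_{\gamma+\omega+1}$. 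For limit $\delta$ — in particular the additively indecomposable ones such as $\omega$, where no nontrivial lexicographic splitting exists — I would instead approximate: with $\gamma_n\uparrow\delta$ the truncations to the first $\gamma_n$ coordinates have dimension $\gamma_n<\delta$, iterating the hypothesis through these stages yields levels $\beta_n$, and since distinct points of $B$ are already separated at some finite stage the refinements cohere into an embedding of $B$ with image in $L_{\beta_\omega}$, where $\beta_\omega=\sup_n\beta_n<\omega_1$.

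The hard part will be the bookkeeping that keeps the bound \emph{uniform} across the continuum many fibers and reconciles it with the fixed data $f$ on $A$: I must check that the gaps cut out by the coarse embedding are pairwise disjoint and correctly ordered, that each admits the separable construction on the same level budget, that the points of $A$ lying in a fiber are compatible with that fiber's designated gap, and that the truncation approximations converge to a genuine order isomorphism rather than merely separating pairs. The feature that makes the whole scheme possible is that patching realises unboundedly many gap-fillings at one level, so that a recursion of length $\delta<\omega_1$ with countable cost per stage terminates below $\omega_1$.
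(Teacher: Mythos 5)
Part a) of your proposal is correct, though it takes a different route from the paper: you extend $f$ point by point through a transfinite recursion, each step being an instance of Lemma 1 because every cut of every suborder of $B$ has countable character (Proposition 2), whereas the paper maps each convex class of $B\setminus A$ over $A$ in one stroke onto a copy of $\Lo$ found inside the corresponding gap of $L$. Both arguments work.

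Part b) is where the substance lies, and your plan has two genuine gaps, located exactly at the points you set aside as ``bookkeeping''. First, the successor step is not well-posed. To ``embed the coarse order by the inductive hypothesis'' has no meaning: the inductive hypothesis is a statement about extending the \emph{given} map $f$, and $f$ induces no map on the coarse order $\pi(B)$ --- two points of $A$ lying in one fiber project to a single coarse point while having distinct, already fixed images in $L_\alpha$; dually, a fiber that meets $A$ cannot be ``assigned a gap'', since the images of its $A$-points are already scattered through $L_\alpha$ and cannot be moved. Carrying the relative data $(A,f)$ through the induction is not a detail to be checked later; as organized, the induction does not go through. The paper removes this obstruction once and for all, \emph{before} any induction, by passing to the convex classes of $B\setminus A$ over $A$ (where $b_0\sim b_1$ iff $[b_0,b_1]\cap A=\emptyset$): a class contains no point of $A$, so what remains is a free-standing problem --- embed an order of countable $\idim$ into a prescribed gap of $L$ within a bounded level --- with no compatibility constraints left.

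Second, your limit step is an assertion rather than an argument. For a general $B\subseteq I^\delta$ the truncations $\pi_{\gamma_n}(B)$ are quotients of $B$, not suborders: there is no canonical nested chain, an embedding of a truncation assigns one point of $L$ to an entire fiber and reserves no room for its later refinement, and ``distinct points are separated at a finite stage'' only gives injectivity of a map you have not constructed. Making such a scheme converge requires assigning intervals rather than points at each stage and realizing nested $\omega$-sequences of intervals by saturation and patching --- machinery you never set up, and precisely where the uniform bound must be earned. The paper sidesteps this by proving the bounded statement only for the concrete orders $2^\gamma$ (the Sublemma: any gap with endpoints in $L_\alpha$ contains a copy of $2^\gamma$ inside $L_{\alpha+\gamma+1}$), for which the limit step is unproblematic: the zero-padded copies of $2^{\gamma_n}$ really are nested suborders whose union $2^{<\gamma}$ is dense in $2^\gamma$, every missing point being determined by its cut. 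An arbitrary convex class with $\gamma=\max\{\idim{B},\omega\}$ then embeds into $I^{\gamma}\hookrightarrow 2^{\omega\cdot\gamma}$, so the concrete case suffices and all (possibly continuum many) classes are handled simultaneously at one level. A smaller slip in your base case: after placing a countable dense set, the remaining points need not fall into \emph{distinct} sub-gaps, since the two endpoints of a jump share a cut of the dense set; this is repairable (a suborder of $I$ has only countably many jumps, so adjoin one endpoint of each to the dense set), but as stated it is false.
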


\begin{proof}
	
	Without loss of generality we may assume that $A\subseteq B$, and $i$ is the identity mapping. For $b_0,b_1 \in B \setminus A$, we set $b_0 \sim b_1$ iff $[b_0,b_1]\cap A=\emptyset$. This clearly defines a convex equivalence relation on $B\setminus A$, and
	it is sufficient to define $\tilde{f}$ on each equivalence class separately. 
	\item[a)] For any $b\in B \setminus A$,
	$$\{a\in A|\,a<b\}<[b]_{\sim}<\{a\in A|\,b<a\}.$$
	Since any linear order of countable $\idim$ has character $\omega$, both these sets have countable cofinality and coinitiality respectively. Therefore, this is also the case for  	
	$$\{f(a)|\,a<b, \, a \in A\}<\{f(a)|\,b<a,\, a\in A\},$$
	so $L$ will contain a point between them. Since it will contain a point, it will contain an interval, and this interval will be countably saturated itself. Since it will be countably saturated, it will contain a copy of $\Lo$, and in turn a copy of any order of countable $I$-dimension, in particular $[b]_{\sim}$. We set $\tilde{f}$ to isomorphically map $[b]_{\sim}$ onto that copy.
	\par
	The "bounded" variant will require more care.
	\begin{sub}
	Let $\{L_\alpha\}_{\alpha<\omega_1}$ be a filtration. Then for every $\alpha<\omega_1$, $\omega \le \gamma <\omega_1$, and for all $a_0 < a_1 \in L_\alpha$, $(a_0,a_1)\cap L_{\alpha+\gamma+1}$ contains a copy of $2^\gamma$. 	
	\end{sub}
	\begin{proof}We proceed by induction on $\gamma$.
		\par
			\item[]\textit{$\gamma=\omega$.}
			$L_{\alpha+\omega} \cap (a_0,a_1)$ contains copy of rationals, so $L_{\alpha+\omega+1} \cap (a_0,a_1)$ contains a copy of reals, and in particular $2^\omega$.\\
			\item[]\textit{successor step.}
			Assume that $i:2^\gamma \hookrightarrow (a_0,a_1)\cap L_{\alpha+\gamma+1}$.For every $x\in 2^\gamma$, there exists $v_x \in L_{\alpha+\gamma+2}$, such that
			$$i(x)<v_x<i[\{y\in 2^\gamma|\,y>x\}],$$
			because the set on the right has countable coinitiality. We set $\tilde{i}(x^\frown 0)=i(x)$, and $\tilde{i}(x^\frown 1)=v_x$.\\
			\item[]\textit{limit step.}
			Fix an increasing sequence $\{\gamma_n\}_{n<\omega}$, cofinal on $\gamma$. Let 
			$$2^{<\gamma}=\{x\in 2^\gamma|\,\exists{\beta<\gamma}\forall{\beta<\delta<\gamma}\; x(\delta)=0\}=\bigcup_{n<\omega}{2^{\gamma_n}}.$$
			By induction we define embeddings $j_n:2^{\gamma_n}\hookrightarrow (a_0,a_1)\cap L_{\alpha+\gamma_n+1}$, such that $j_n\subseteq j_{n+1}$. \\
			$j_0:2^{\gamma_0}\rightarrow (a_0,a_1)\cap L_{\alpha+\gamma_0+1}$ exists by induction hypothesis. Assume, that we have $j_n$ defined, and look at $x\in 2^{\gamma_n}$. Since $L_{\alpha+\gamma_n+2}$ patches $L_{\alpha+\gamma_n+1}$, there exist $v_x$, such that
			$$j_n(x)<v_x<j_n[\{y\in 2^{\gamma_n}|\,y>x\}],$$
			and $(j_n(x),v_x)\subseteq (a_0,a_1)$ (this is automatic, unless $\{y\in 2^{\gamma_n}|\,y>x\}=\emptyset$).
			By the induction hypothesis we can find $l:2^{\gamma_{n+1}\setminus \gamma_n}\hookrightarrow (j_n(x),v_x)\cap L_{\alpha+\gamma_{n+1}+1}$. We define, for $w\neq 0$,
			$j_{n+1}(x^\frown w)=l(w)$. Notice, that $x^\frown w \in 2^{\gamma_n}\times 2^{\gamma_{n+1}\setminus\gamma_n}=2^{\gamma_{n+1}}$.\\
			Finally, we see that $\displaystyle{\bigcup_{n<\omega}{
			j_n:2^{<\gamma}\hookrightarrow L_{\alpha+\gamma}}}$,
			and so $2^\gamma\hookrightarrow L_{\alpha+\gamma+1}$.
	\end{proof}
	\item[b)]Assume that $\{L_\alpha\}_{\alpha<\omega_1}$ is a filtration of $L$, and $f[A]\subseteq L_\beta$. Then there exists $u,v \in L_{\beta+2}$, such that 
	$$\{f(a)|\,a<b\}<u<v<\{f(a)|\,b<a\}.$$ By the Sublemma, 
	$(u,v)\cap L_{\beta+\omega\cdot\gamma+1}$ contains a copy of $2^{\omega\cdot\gamma}$, so also $I^\gamma$, for any given $\gamma$. If $\gamma=\max\{\idim{B},\omega\}$, then in particular $(u,v)\cap L_{\beta+2+\omega\cdot\gamma+1}$ contains a copy of $[b]_{\sim}$. We define $\tilde{f}$ the same way as before, but making sure, that $\tilde{f}[B]\subseteq L_{\beta+\omega\cdot\gamma+1}$.
\end{proof}

\begin{prop}
	A countably saturated linear order is prime if and only if it has a filtration.
\end{prop}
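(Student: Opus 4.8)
The plan is to prove the two implications separately, in each case by a transfinite recursion of length $\omega_1$ resting on the Lemma on Bounded Injectivity.

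\textbf{Filtration $\Rightarrow$ prime.} Suppose $L$ carries a filtration $\{L_\alpha\}_{\alpha<\omega_1}$ and let $X$ be an arbitrary countably saturated linear order. I would build a coherent increasing chain of embeddings $f_\alpha\colon L_\alpha\hookrightarrow X$ by recursion and take $f=\bigcup_{\alpha<\omega_1} f_\alpha\colon L\hookrightarrow X$ at the end, which is legitimate since $\bigcup_\alpha L_\alpha=L$. The base map $f_0$ exists because $X$, being countably saturated, contains a copy of $\Lo$ (Theorem 7) and hence of every order embeddable into some $I^\gamma$, in particular $L_0$. At a successor step, given $f_\alpha\colon L_\alpha\hookrightarrow X$, I apply part (a) of the Lemma on Bounded Injectivity with $A=L_\alpha$, $B=L_{\alpha+1}$, $i$ the inclusion and $f=f_\alpha$; this is permitted precisely because $\idim L_\alpha,\idim L_{\alpha+1}<\omega_1$ and $X$ is countably saturated, and it yields $f_{\alpha+1}$ extending $f_\alpha$. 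Limit steps are handled by unions. Note that this direction uses only that the $L_\alpha$ increase, cover $L$, and have countable $\idim$; the patching clause plays no role here.

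\textbf{Prime $\Rightarrow$ filtration.} Since $L$ is prime and $\Lo$ is countably saturated (Theorem 5), I may regard $L$ as a suborder of $\Lo=\bigcup_{\alpha<\omega_1}I^\alpha$. Put $M_\alpha=L\cap I^\alpha$; the $M_\alpha$ increase, cover $L$, are continuous at limits (as $I^\alpha=\bigcup_{\beta<\alpha}I^\beta$ for limit $\alpha$), and satisfy $\idim M_\alpha\le\alpha<\omega_1$. These serve only to force the filtration to cover $L$; the difficulty is that the $M_\alpha$ need not patch one another, and intersecting the patches of $\Lo$ with $L$ would destroy them. To repair this I construct $\{L_\alpha\}_{\alpha<\omega_1}\subseteq L$ by recursion so that $M_\alpha\subseteq L_\alpha$, the $L_\alpha$ increase, $\idim L_\alpha<\omega_1$, and $L_{\alpha+1}$ patches $L_\alpha$. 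At a successor stage I set $A=L_\alpha\cup M_{\alpha+1}$, which has $\idim A<\omega_1$ by Proposition 4. Lemma 5 supplies an abstract $B\supseteq A$ patching $A$ with $\idim B<\omega_1$, and part (a) of the Lemma on Bounded Injectivity, applied with $f$ the inclusion $A\hookrightarrow L$, realises $B$ inside $L$ over $A$: it produces $\tilde f\colon B\hookrightarrow L$ fixing $A$ pointwise. I then put $L_{\alpha+1}=\tilde f[B]$. This set lies in $L$, contains $A\supseteq L_\alpha\cup M_{\alpha+1}$, has $\idim=\idim B<\omega_1$, and—because $\tilde f$ is an order isomorphism fixing $A$—patches $A$, hence $L_\alpha$.

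\textbf{The main obstacle: limit stages.} The delicate point is the limit step, where I must take $L_\alpha=\bigcup_{\beta<\alpha}L_\beta$ and still guarantee $\idim L_\alpha<\omega_1$; here $M_\alpha=\bigcup_{\beta<\alpha}M_\beta\subseteq L_\alpha$ comes for free by continuity. Since $\alpha<\omega_1$ is a countable limit, this reduces to the following claim, which I would isolate as a lemma and which I expect to be the heart of the argument: \emph{a countable increasing union $A=\bigcup_nA_n$ of sets with $\idim A_n<\omega_1$ again has $\idim A<\omega_1$.} I would prove it by a coherent iteration of the construction in Proposition 4. Writing $A_{n+1}=A_n\sqcup(A_{n+1}\setminus A_n)$, that construction extends a given embedding $\phi_n\colon A_n\hookrightarrow I^{\beta_n}$ to $\phi_{n+1}\colon A_{n+1}\hookrightarrow I^{\beta_{n+1}}$ with $\beta_{n+1}=\beta_n+1+\idim(A_{n+1}\setminus A_n)<\omega_1$ and, crucially, with $\phi_{n+1}\restriction A_n=\phi_n$ once $I^{\beta_n}$ is identified with its zero-padded copy in $I^{\beta_{n+1}}$. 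The $\phi_n$ then cohere, $\gamma=\sup_n\beta_n<\omega_1$, and $\bigcup_n\phi_n$ embeds $A$ into $I^\gamma$, so $\idim A\le\gamma<\omega_1$. Granting this, the recursion goes through at all stages, and $\bigcup_{\alpha<\omega_1}L_\alpha=\bigcup_{\alpha<\omega_1}M_\alpha=L$ shows that $\{L_\alpha\}_{\alpha<\omega_1}$ is a filtration of $L$.
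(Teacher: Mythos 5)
Your argument is correct, and its skeleton is the same as the paper's: both implications are transfinite recursions of length $\omega_1$ driven by part (a) of the Lemma on Bounded Injectivity, with Lemma 5 supplying the patches at successor stages. You diverge from the paper in two places, and both divergences are improvements. First, in the paper's successor step the patch $\tilde{L}_\gamma$ is taken merely as a subset of $\Lo$, so the sets it builds need not lie in $L$; as literally written, the paper produces a filtration of some order intermediate between $L$ and $\Lo$, whereas Definition 5 demands subsets of $L$ with union $L$ (which is what the back-and-forth in Theorem 12 actually consumes). Your move of pulling the abstract patch $B$ back into $L$ by applying part (a) of the Lemma on Bounded Injectivity to the inclusion $A\hookrightarrow L$ --- legitimate because a prime order is countably saturated --- is exactly what is needed to keep the recursion inside $L$. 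Second, the paper verifies $\idim<\omega_1$ only at successor stages (via Proposition 4) and is silent at limits; since any filtration must satisfy $\idim\bigl(\bigcup_{\beta<\alpha}L_\beta\bigr)<\omega_1$ by monotonicity of $\idim$, the countable-union lemma you isolate is genuinely unavoidable, and your proof of it is sound: the extension constructed in Proposition 4 does restrict to the zero-padded copy of the given embedding, because every comparison involving a new point is already decided on the first block of coordinates. (Alternatively, the same lemma follows from part (b) of the Lemma on Bounded Injectivity, by realizing the $A_n$ coherently inside bounded levels of the canonical filtration of $\Lo$.)

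One small slip: for limit $\alpha$ it is \emph{not} true that $I^\alpha=\bigcup_{\beta<\alpha}I^\beta$, since an element of $[-1,1]^\alpha$ may have support cofinal in $\alpha$ (every limit $\alpha<\omega_1$ has $\operatorname{cf}(\alpha)=\omega$). Hence $M_\alpha$ need not equal $\bigcup_{\beta<\alpha}M_\beta$, and the invariant $M_\alpha\subseteq L_\alpha$ can fail at limit stages. This is harmless for your proof: the successor step already forces $M_{\alpha+1}\subseteq L_{\alpha+1}$, and $\bigcup_{\alpha<\omega_1}M_{\alpha+1}=L$, so the constructed sequence still covers $L$; simply drop the continuity remark rather than rely on it.
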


\begin{proof}
	Assume, that $L$ is prime countably saturated. Then without loss of generality, we can assume, that $L \subseteq \Lo$. Denote $$I_\alpha=\{x\in \Lo |\; \forall{\beta \ge \alpha} \;x(\beta)=0 \}.$$
	
	Notice, that the sequence $\{I_\alpha\}_{\alpha<\omega_1}$ constitutes a filtration of $\Lo$. We define a filtration $\{L_\alpha\}_{\alpha<\omega_1}$ of $L$ by induction.
	\begin{itemize}
		\item $L_0 = I_0 \cap L$,
		\item Assume we have $L_\gamma$ constructed. Using Lemma 5, we can find $\tilde{L_\gamma} \subseteq \Lo$, with $\idim{\tilde{L_\gamma}}<\omega_1$, which patches $L_\gamma$. Define $L_{\gamma+1}=\tilde{L_\gamma} \cup (L\cap I_{\gamma})$. By Proposition 4, $\idim{L_{\gamma+1}}<\omega_1$.
		\item When $\gamma <\omega_1$ is limit, we set 
		$L_\gamma=\displaystyle{\bigcup_{\alpha<\gamma}{L_\alpha}}.$
	\end{itemize}
 Suppose now that $L$ is countably saturated, and has a filtration $\{L_\alpha\}_{\alpha<\omega_1}$. Let $X$ be any countably saturated linear order. Using Lemma On Bounded Injectivity, we can easily build an $\subseteq$-increasing sequence of increasing mappings $i_\alpha:L_\alpha \hookrightarrow X$. Its sum will be an embedding of $L$ into $X$.

\end{proof}

\begin{proof}[Proof of Theorem 12]
Consider two prime countably saturated linear orders. They both admit filtrations, so, using the Lemma On Bounded Injectivity, we can use back-and-forth argument, to inductively build an isomorphim between them.

\end{proof}

We can characterize the prime countably saturated linear order using the dimension.

\begin{prop}
	Let $(L,\le)$ be a countably saturated linear order. The following are equivalent:
	\begin{itemize}
		\item $L$ is prime countably saturated;
		\item $L$ is an increasing sum $\bigcup_{\alpha<\omega_1}{L_\alpha}$, where $\operatorname{2-dim}{L_\alpha}<\omega_1$, for each $\alpha<\omega_1$;
		\item  $L$ is an increasing sum $\bigcup_{\alpha<\omega_1}{L_\alpha}$, where $\idim{L_\alpha}< \omega_1$, for each $\alpha<\omega_1$.
	\end{itemize}
\end{prop}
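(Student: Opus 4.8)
The plan is to reduce everything to Proposition 5, which already identifies ``prime countably saturated'' with ``admits a filtration.'' A filtration is an increasing $\omega_1$-chain $\{L_\alpha\}$ covering $L$ with $L_{\alpha+1}$ patching $L_\alpha$ and $\idim L_\alpha<\omega_1$, so the only gap between ``has a filtration'' and the decompositions in the second and third bullets is the patching requirement and the choice of dimension ($\operatorname{2-dim}$ versus $\idim$). I would prove the cycle by first disposing of the equivalence of the last two bullets, then observing that the first bullet implies the third almost by definition, and finally doing the real work in the implication third $\Rightarrow$ first.

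For the equivalence of the second and third bullets it suffices to check that for a single linear order $X$ one has $\operatorname{2-dim}X<\omega_1$ iff $\idim X<\omega_1$, after which the very same decomposition witnesses both bullets. One direction is immediate from $2\hookrightarrow I$ and Proposition 3, which give $\idim X\le \operatorname{2-dim}X$. For the converse I would use $I\hookrightarrow 2^\omega$ (already invoked in the proof of Corollary 1): if $X\hookrightarrow I^\alpha$ with $\alpha<\omega_1$, then $X\hookrightarrow (2^\omega)^\alpha\simeq 2^{\omega\cdot\alpha}$, and $\omega\cdot\alpha<\omega_1$, whence $\operatorname{2-dim}X\le\omega\cdot\alpha<\omega_1$. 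The implication (1)$\Rightarrow$(3) is then immediate: by Proposition 5 a prime $L$ has a filtration, and forgetting the patching clause a filtration is exactly an increasing sum covering $L$ with countable $\idim$ on each piece.

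The substantive direction is (3)$\Rightarrow$(1). Starting from an increasing sum $L=\bigcup_{\alpha<\omega_1}L_\alpha$ with $\idim L_\alpha<\omega_1$, I would manufacture an honest filtration $\{L'_\gamma\}$ by recursion, interleaving two tasks at successor stages: absorbing the next given piece $L_{\gamma+1}$ (to guarantee the union is all of $L$) and patching the previous stage inside $L$. The patching is the delicate point, because Lemma 5 only produces an \emph{abstract} superorder $B\supseteq L'_\gamma$ that patches $L'_\gamma$ and has countable $\idim$, whereas I need the new points to live in $L$ itself. Here I would apply part a) of the Lemma on Bounded Injectivity to the inclusion $L'_\gamma\hookrightarrow L$ along $L'_\gamma\hookrightarrow B$: this extends the inclusion to an embedding $\tilde f\colon B\hookrightarrow L$ fixing $L'_\gamma$, so its image $P_\gamma:=\tilde f[B]\subseteq L$ contains $L'_\gamma$, patches it (the order embedding carries the separating points of $B$ into the corresponding gaps of $L'_\gamma$), and satisfies $\idim P_\gamma=\idim B<\omega_1$. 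I then set $L'_{\gamma+1}=P_\gamma\cup L_{\gamma+1}$, whose $\idim$ stays countable by Proposition 4.

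The main obstacle is controlling the dimension at limit stages, where I am forced to take $L'_\gamma=\bigcup_{\beta<\gamma}L'_\beta$, a countable increasing union; Proposition 4 only handles finite unions. So I would first establish the auxiliary fact that a countable increasing union of linear orders of countable $\idim$ again has countable $\idim$. For this I would iterate the construction behind Proposition 4 coherently: embed $A_0\hookrightarrow I^{\gamma_0}$, and given a prefix-embedding $A_n\hookrightarrow I^{\gamma_n}$ extend it across the new points $A_{n+1}\setminus A_n$ to $I^{\gamma_{n+1}}$ with $\gamma_{n+1}=\gamma_n+1+\idim(A_{n+1}\setminus A_n)$, arranging each embedding to extend the previous one by appending coordinates beyond $\gamma_n$. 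The limit of these prefix-coherent embeddings sends $\bigcup_n A_n$ into $I^\gamma$ with $\gamma=\sup_n\gamma_n<\omega_1$. With this lemma the limit stages of $\{L'_\gamma\}$ have countable $\idim$; since every $L_\delta$ is absorbed by stage $\delta+1$, the chain covers $L$ regardless of how the given sum behaves at limits. Hence $\{L'_\gamma\}$ is a filtration, and Proposition 5 yields that $L$ is prime.
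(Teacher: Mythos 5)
Your proposal is correct, but it handles the substantive implication (3)$\Rightarrow$(1) by a genuinely different and heavier route than the paper. The paper argues directly from the definition of primeness: given $L=\bigcup_{\alpha<\omega_1}L_\alpha$ with $\idim{L_\alpha}<\omega_1$ and an arbitrary countably saturated $X$, it builds an embedding $L\hookrightarrow X$ by a single transfinite induction using part a) of the Lemma on Bounded Injectivity; the limit-stage difficulty you labour over simply does not arise there, because the union $\bigcup_{\beta<\gamma}L_\beta$ of the earlier pieces is a \emph{subset} of the current piece $L_\gamma$, so its $\idim$ is countable by monotonicity (Proposition 3). You instead upgrade the given decomposition to an honest filtration and then cite Proposition 5; since your patching sets $P_\gamma$ need not lie inside any single $L_\delta$, you are forced to prove the auxiliary fact that countable increasing unions preserve countable $\idim$ --- your coherent-padding iteration of the construction behind Proposition 4 does establish this, and your use of Lemma 5 together with part a) of the Lemma on Bounded Injectivity to realize the abstract patching order inside $L$ is sound. (For the first bullet, the paper goes through Theorem 12, i.e. $L\simeq\Lo=\bigcup_{\alpha<\omega_1}[-1,1]^\alpha\hookrightarrow\bigcup_{\alpha<\omega_1}2^{\omega\cdot\alpha}$, which gives (1)$\Rightarrow$(2) at once, whereas you reach (3) via Proposition 5 and translate between the two dimensions by $2\hookrightarrow I\hookrightarrow 2^\omega$; both are fine.) The trade-off: the paper's argument is shorter and needs nothing beyond what is already proved, while yours yields stronger by-products --- every increasing $\omega_1$-decomposition of a countably saturated order into pieces of countable $\idim$ refines to a genuine filtration, and the countable-union lemma for $\idim$ --- at the price of extra machinery.
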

\begin{proof}
	$1 \Rightarrow 2$. If $L$ is prime countably saturated, then \\$L \simeq \Lo = \bigcup_{\alpha<\omega_1}{[-1,1]^\alpha}\hookrightarrow \bigcup_{\alpha<\omega_1}{2^{\omega\cdot \alpha}}$.\\
	$2 \Rightarrow 3$. Clear.\\
	$3 \Rightarrow 1$. If $X$ is any countably saturated linear order, we construct an embedding $L \hookrightarrow X$ by induction, using the Lemma on Bounded Injectivity.
\end{proof}

This should by compared with the \emph{Characterization Theorem} of Harzheim \cite{harz}.

\begin{prop}[Harzheim, \cite{harz}]
	Let $(L,\le)$ be a countably saturated linear order. The following are equivalent:
	\begin{itemize}
		\item $L$ is prime countably saturated;
		\item $L$ is an increasing sum $\bigcup_{\alpha<\omega_1}{L_\alpha}$, where 
		$L_\alpha$ doesn't contain a copy of $\omega_1$ or $\omega_1^*$, for each $\alpha<\omega_1$.
	\end{itemize}
\end{prop}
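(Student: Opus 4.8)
The plan is to deduce this characterization from the preceding Proposition, by showing that the two notions of decomposition it offers are interchangeable. The implication from primeness to the stated decomposition is immediate: if $L$ is prime countably saturated then, by uniqueness, $L\simeq\Lo=\bigcup_{\alpha<\omega_1}[-1,1]^\alpha$, and each level $[-1,1]^\alpha$ contains no copy of $\omega_1$ or $\omega_1^*$ by Proposition 2. Thus $\{[-1,1]^\alpha\}_{\alpha<\omega_1}$ is already a witnessing increasing sum. (Equivalently, one may start from the $\idim$-decomposition supplied by the preceding Proposition and invoke Proposition 2 to note that countable $I$-dimension forbids copies of $\omega_1$ and $\omega_1^*$.)

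The substance lies in the converse. Suppose $L=\bigcup_{\alpha<\omega_1}L_\alpha$ is an increasing sum in which no $L_\alpha$ contains a copy of $\omega_1$ or $\omega_1^*$; I want to show $L$ embeds into an arbitrary countably saturated $X$, which, together with the countable saturation of $L$, yields primeness. The key observation is that the non-bounded part a) of the Lemma On Bounded Injectivity goes through verbatim when the hypothesis $\idim<\omega_1$ is replaced by the condition ``no copy of $\omega_1$ or $\omega_1^*$''. Indeed, its proof uses the dimension hypothesis in exactly two places, and both survive the weakening: first, to ensure that the cuts $\{a\in A:a<b\}$ and $\{a\in A:b<a\}$ have countable cofinality and coinitiality respectively --- but an uncountable cofinality or coinitiality would produce a strictly monotone $\omega_1$- or $\omega_1^*$-sequence inside $A$, which is excluded; and second, to embed the convex class $[b]_\sim$ into the countably saturated interval produced between the two images --- and here, since $[b]_\sim$ is a suborder of $B$, it again contains no copy of $\omega_1$ or $\omega_1^*$, so Hausdorff's embedding theorem (Theorem 2) places a copy of it into that interval. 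Formulated this way, part a) gives: for $A\subseteq B$ both free of copies of $\omega_1$ and $\omega_1^*$ and any increasing $f:A\hookrightarrow X$ into a countably saturated $X$, there is an increasing extension $\tilde f:B\hookrightarrow X$.

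With this variant in hand, I would build the embedding $L\hookrightarrow X$ by transfinite recursion along the given sum, exactly as in the proof $3\Rightarrow 1$ of the preceding Proposition: set $f_0:L_0\hookrightarrow X$ using Theorem 2, extend $f_\alpha$ to $f_{\alpha+1}:L_{\alpha+1}\hookrightarrow X$ by the variant above (taking $A=L_\alpha$, $B=L_{\alpha+1}$), and take unions at limit stages. The union $\bigcup_{\alpha<\omega_1}f_\alpha$ is then an increasing map of $L=\bigcup_\alpha L_\alpha$ into $X$, and since $X$ was an arbitrary countably saturated order, $L$ is prime. The only point demanding care --- and the main obstacle --- is the verification that part a) genuinely does not invoke the bounded refinement (the Sublemma): for a one-directional embedding we never need to control the level into which the image falls, and it is precisely this that lets us trade the dimension bound for the purely combinatorial ``no $\omega_1$, no $\omega_1^*$'' condition. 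Everything else is the routine cut-and-class bookkeeping of the original lemma.
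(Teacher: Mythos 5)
The paper offers no proof for you to be compared against: this proposition is stated as Harzheim's \emph{Characterization Theorem}, quoted from \cite{harz} purely for comparison with the preceding Proposition, whose levels are required to have countable $\idim$ rather than merely to omit $\omega_1$ and $\omega_1^*$. Your proposal is a correct, essentially self-contained derivation of it inside the paper's framework, and it isolates exactly the right issue: the level condition here is genuinely weaker than countable $\idim$ (Aronszajn lines omit $\omega_1$ and $\omega_1^*$ but, by Giarlotta's results \cite{giarlotta}, have uncountable $\idim$), so the implication ``decomposition $\Rightarrow$ prime'' is strictly stronger than the $3\Rightarrow 1$ direction of the preceding Proposition and does require reworking part a) of the Lemma on Bounded Injectivity. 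Your two substitutions are sound: if a cut $\{a\in A:\ a<b\}$ had uncountable cofinality it would contain a strictly increasing $\omega_1$-sequence (the cofinality of a linear order without greatest element is regular), contradicting $\omega_1\not\hookrightarrow A$, and symmetrically for coinitiality; and Theorem 2 embeds each class $[b]_\sim\subseteq B$ into the countably saturated interval found between the images of its cuts. You are also right that the bounded refinement (part b) and the Sublemma) plays no role in a one-directional embedding --- indeed the target $X$ need not even admit a filtration, so boundedness could not even be formulated. The forward direction via $L\simeq\Lo=\bigcup_{\alpha<\omega_1}[-1,1]^\alpha$ and Proposition 2 is likewise correct.

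Two small points to tighten. First, an increasing sum need not be continuous at limit ordinals, so ``take unions at limit stages'' leaves $f_\lambda$ defined only on $\bigcup_{\beta<\lambda}L_\beta$, which may be a proper subset of $L_\lambda$; either apply your variant of part a) at limit stages as well, with $A=\bigcup_{\beta<\lambda}L_\beta$ and $B=L_\lambda$ (note that $A$, being a subset of $L_\lambda$, inherits the $\omega_1$- and $\omega_1^*$-freeness), or remark that one may assume continuity without loss of generality by replacing $L_\lambda$ with $\bigcup_{\beta<\lambda}L_\beta$ at limits. Second, when quoting your variant make explicit that the hypothesis is needed only for $B$ (it passes to $A\subseteq B$ automatically), which is what makes the limit-stage application legitimate. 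With these trivial repairs your argument is complete and is, in substance, the natural proof of Harzheim's characterization by the methods of this paper.
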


There is one more propery of $\Lo$, which follows from the Lemma on Bounded Injectivity. 

\begin{prop}
	Any isomorphism between subsets of $\Lo$ of countable $\idim$ extends to an automorphism of $\Lo$. In particular, any automorphism between countable suborders extends to an automorphism of $\Lo$.
\end{prop}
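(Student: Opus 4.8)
The plan is to obtain the extension by a back-and-forth construction of length $\omega_1$, driven by the Lemma On Bounded Injectivity, preceded by one preparatory move that removes the only genuine difficulty. Fix the given order-isomorphism $g\colon A\to A'$, where $\idim A,\idim A'<\omega_1$ (note $\idim A'=\idim A$ automatically, since $A\simeq A'$). The naive attempt is to run a back-and-forth with respect to the canonical filtration $\{I_\alpha\}_{\alpha<\omega_1}$ of $\Lo$, extending $g$ at successor steps so that the level $I_\xi$ enters both the domain and the range, and taking unions at limits. The obstacle lies exactly at the limit stages: a \emph{countable} union of sets of countable $I$-dimension need not have countable $I$-dimension, since Proposition 4 covers only \emph{finite} unions. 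Hence, without extra control, the domains and ranges could acquire uncountable $I$-dimension and the Lemma On Bounded Injectivity would cease to apply. The device that keeps $\idim$ countable along the chain is part (b) of that Lemma: if every stage is \emph{bounded} in a fixed filtration (contained in some level), then the stage attained at a limit is contained in the supremum of those levels, which is again a countable ordinal, and so remains bounded.

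The catch is that $A$ and $A'$ need not be bounded in $\{I_\alpha\}$, so part (b) cannot be invoked at the base step. To repair this I would \emph{not} use the canonical filtration, but instead choose a filtration $\{L_\alpha\}_{\alpha<\omega_1}$ of $\Lo$ with $A\cup A'\subseteq L_0$. This is legitimate: since $\idim(A\cup A')<\omega_1$ by Proposition 4, the recursion in the proof of Proposition 5 may be started from $L_0:=A\cup A'$, patching with Lemma 5 (realized inside $\Lo$ by part (a) of the Lemma On Bounded Injectivity) and interleaving the levels $I_\gamma$ at successor steps so that $\bigcup_\alpha L_\alpha=\Lo$. The outcome is a genuine filtration, every level of countable $I$-dimension, with respect to which both $A$ and $A'$ are bounded; in fact they sit at level $0$, so $g$ itself is bounded.

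Now I would carry out the back-and-forth, building an increasing, continuous chain of order-embeddings $g_\xi\colon B_\xi\to B'_\xi$ ($\xi<\omega_1$) with $g_0=g$, each bounded in $\{L_\alpha\}$ (say $B_\xi,B'_\xi\subseteq L_{\beta_\xi}$ with $\beta_\xi<\omega_1$), and with $L_\xi\subseteq B_{\xi+1}$ and $L_\xi\subseteq B'_{\xi+1}$. At a successor step, $B:=B_\xi\cup L_\xi$ has countable $I$-dimension by Proposition 4, so applying part (b) of the Lemma On Bounded Injectivity to $g_\xi$ (whose image $B'_\xi$ is bounded) yields a bounded extension whose domain contains $L_\xi$; applying part (b) to the inverse of that map then throws $L_\xi$ into the range as well, again boundedly. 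At a limit $\lambda$ I set $g_\lambda=\bigcup_{\xi<\lambda}g_\xi$; its domain and range lie in $L_{\sup_{\xi<\lambda}\beta_\xi}$ with $\sup_{\xi<\lambda}\beta_\xi<\omega_1$, so the stage is bounded and of countable $I$-dimension, and an increasing union of order-embeddings is again an order-embedding. Finally $\Phi:=\bigcup_{\xi<\omega_1}g_\xi$ is an order-embedding of $\Lo$ into itself extending $g$; since its domain and range both contain $\bigcup_{\xi<\omega_1}L_\xi=\Lo$, it is onto, i.e. an automorphism. The ``in particular'' clause is the special case of countable suborders, which have $\idim\le 1<\omega_1$.

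I expect the main obstacle to be precisely the limit-stage control of $I$-dimension; this is what forces the preparatory choice of a filtration with $A\cup A'\subseteq L_0$ in place of the canonical one. Once that choice is made, the boundedness clause of the Lemma On Bounded Injectivity propagates through all successor and limit stages, and the remainder is a routine bounded back-and-forth.
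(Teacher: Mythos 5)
Your proof is correct and takes essentially the same route as the paper: the paper also seeds filtrations of $\Lo$ with the given sets (it uses two filtrations, with $X_0=X$ and $Y_0=Y$, where you use a single one with $L_0=A\cup A'$, justified by Proposition 4) and then runs the same bounded back-and-forth via the Lemma on Bounded Injectivity. This one-versus-two-filtrations difference is cosmetic, so nothing further is needed.
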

\begin{proof}
	Let $\phi:X \hookrightarrow Y$ be in isomorphism, where $X,Y \subseteq \Lo$, and $\idim(X)<\omega_1$, $\idim(Y)<\omega_1$. We can define two filtrations $(X_\alpha)_{\alpha<\omega_1}$, $(Y_\alpha)_{\alpha<\omega_1}$, such that $X=X_0$, $Y=Y_0$. Then we successively extend the automorphism, using the Lemma on Bounded Injectivity.	
\end{proof}

As a matter of fact, Lemma on Bounded Injectivity basically proves that $\Lo$ is a Fra\"iss\'e limit of length $\omega_1$, as defined in \cite{fraisse}, of the class of linear orders of countable $\idim$.

\section{$(\omega_1,\mathfrak{c})$-saturated graphs}

 We investigate existence and uniqueness of graphs with certain homogeneity properties. All graphs denoted by $G$ or $H$ are undirected, those ones denoted by $\overleftarrow{G}$ or $\overleftarrow{H}$ are directed, and if $\overleftarrow{G}$ is a directed graph, then $G$ is the corresponding undirected graph, i.e. $\{a,b\}\in E(G)$ if and only if $(a,b)\in E(\overleftarrow{G})$ or $(b,a)\in E(\overleftarrow{G})$. By $N(g)$, where $g$ is a vertex of some graph, we denote the set of all vertices connected to $g$ (in whichever direction, in case the graph is directed). By $N_z(g)$ we denote the set of all vertices connected with $g$ by an arrow starting in $g$.

\par
\begin{defin}
	
	Let $\lambda$ and $\kappa$ be any cardinal numbers. A graph $G$ is $(\lambda,\kappa)$-saturated if for any $A,B\subset G$, $|A|<\lambda$, $|B|<\kappa$, $A\cap B=\emptyset$, there exists a vertex $v\in G$ such that $A\subseteq N(v)$, $ B\cap N(v)=\emptyset$. 
	
\end{defin}

It is long and well-known, that all countable $(\omega,\omega)$-saturated graphs are isomorphic. Under suitable cardinal arithmetic this result can be generalized to higher cardinalities. In particular if CH holds, then all $(\omega_1,\omega_1)$-saturated graphs of cardinality $\mathfrak{c}$ are isomorphic.

\begin{defin}
	We introduce some classes of graphs, useful for our purposes.
	\begin{itemize}
		
		\item A graph $G$ is of the first type, if there exists a bijective enumeration $G=\{g_\alpha|\, \alpha<\mathfrak{c}\}$, such that:
		\begin{enumerate}
			\item $|N(g_\alpha)\cap \{g_\beta|\beta<\alpha\}|\le \omega,$
			for all $\alpha<\mathfrak{c};$
			\item for each countable $A\subseteq G$, there are continuum many indices $\alpha$, such that \\
			$N(g_\alpha)\cap \{g_\beta|\beta<\alpha\}=A$.
		\end{enumerate}
		\item A graph $\overleftarrow{G}$ is strictly saturated, if
		\begin{enumerate}
			\item $|N_z(v)|\le \omega$, for each $v\in \overleftarrow{G}$;
			\item for each countable $A\subseteq
			\overleftarrow{G}$, there are $|\overleftarrow{G}|$-many $g \in \overleftarrow{G}$, such that $N_z(g)=A$.
		\end{enumerate}
	\end{itemize}
\end{defin}

As in the case of linear orders, a graph from some class of graphs $\mathcal{C}$ is prime, if it embeds into every graph from $\mathcal{C}$. We aim to prove the existence and uniqueness of the prime $(\omega_1,\mathfrak{c})$-saturated graph.\\

Recall, that the \emph{colouring number}, denoted $\col(G)$ of a graph $G$ is the least cardinal $\kappa$, such that there exists a well-ordering $(G,\preccurlyeq)$, with the property $$|\{ h\preccurlyeq g|\, \{g,h\}\in E(G) \}|<\kappa,$$ for all $g \in G$. In follows from the definition, that any graph of the first type has colouring number $\omega_1$.

\begin{prop}
	If a graph $G$ is of the first type, then the edges of $G$ can be directed in such a way, that the resulting directed graph is strictly saturated, and doesn't have an infinite path.
\end{prop}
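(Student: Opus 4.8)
The plan is to read an orientation straight off the enumeration witnessing that $G$ is of the first type, orienting every edge \emph{backwards}, from the later endpoint to the earlier one. Concretely, fix a bijective enumeration $G=\{g_\alpha\mid \alpha<\mathfrak{c}\}$ as in the definition of the first type, and for each edge $\{g_\alpha,g_\beta\}\in E(G)$ with $\beta<\alpha$ declare the arrow $(g_\alpha,g_\beta)\in E(\overleftarrow{G})$. With this choice the out-neighbourhood of $g_\alpha$ is exactly its set of \emph{earlier} neighbours, that is $N_z(g_\alpha)=N(g_\alpha)\cap\{g_\beta\mid \beta<\alpha\}$.

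The two clauses of strict saturation then fall out directly from the two clauses defining the first type. Clause (1) of the first type says precisely that $|N(g_\alpha)\cap\{g_\beta\mid\beta<\alpha\}|\le\omega$, which now reads $|N_z(g_\alpha)|\le\omega$; this is clause (1) of strict saturation. For clause (2), fix a countable $A\subseteq \overleftarrow{G}$. Clause (2) of the first type supplies continuum-many indices $\alpha$ with $N(g_\alpha)\cap\{g_\beta\mid\beta<\alpha\}=A$, and for each such $\alpha$ this equality is exactly $N_z(g_\alpha)=A$. Since the enumeration has length $\mathfrak{c}$ we have $|\overleftarrow{G}|=\mathfrak{c}$, so continuum-many such $g_\alpha$ is the same as $|\overleftarrow{G}|$-many, which is clause (2) of strict saturation.

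The absence of an infinite (directed) path is then immediate from well-foundedness of the ordinals. Any directed path $g_{\alpha_0}\to g_{\alpha_1}\to g_{\alpha_2}\to\cdots$ follows arrows, and each arrow strictly decreases the index, so it would produce a strictly decreasing sequence $\alpha_0>\alpha_1>\alpha_2>\cdots$ of ordinals; no such sequence can be infinite, so every directed path terminates.

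There is no serious obstacle beyond identifying the correct orientation: the backward one works because it converts the ``few earlier neighbours'' clause (1) of the first type directly into the out-degree bound, whereas the forward orientation would leave out-degrees uncontrolled and ruin clause (1) of strict saturation. The one point that deserves a moment's care is that the witnesses $\alpha$ produced by clause (2) automatically satisfy $A\subseteq\{g_\beta\mid\beta<\alpha\}$ (this is forced by the equality $N(g_\alpha)\cap\{g_\beta\mid\beta<\alpha\}=A$ itself), so that each of them genuinely has all of $A$ enumerated below it and hence $N_z(g_\alpha)=A$ really holds under the backward orientation.
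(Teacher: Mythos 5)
Your proof is correct and takes exactly the paper's approach: the paper's (one-sentence) proof likewise fixes the enumeration witnessing the first type and directs every edge downwards with respect to it, with the verification of strict saturation and well-foundedness left implicit. Your write-up just spells out those routine checks.
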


\begin{proof}
	Fix a well-ordering, which witnesses that $G$ is of the first type, and then direct every edge in a decreasing manner with respect to that ordering.
\end{proof}

 This is an important observation, and this is why.

\begin{lem}
	If $\overleftarrow{G}$ and $\overleftarrow{H}$ are strictly saturated graphs of cardinality $\mathfrak{c}$ without infinite paths, then they are isomorphic.
\end{lem}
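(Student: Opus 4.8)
The plan is to use a back-and-forth argument between $\overleftarrow{G}$ and $\overleftarrow{H}$, building an isomorphism as the union of an increasing chain of partial isomorphisms $\phi_\xi$ between finite (or countable) subsets, indexed by $\xi<\mathfrak{c}$. The crucial structural feature I would exploit is that \emph{a strictly saturated graph without infinite paths is well-founded}: the ``arrow'' relation (pointing from $g$ to each element of $N_z(g)$) has no infinite descending path, so we may assign to each vertex an ordinal \emph{rank}, and every vertex has only countably many out-neighbours, each of strictly smaller rank. This is what makes the forth-step feasible: to match a new vertex $g$ we must realise a prescribed countable out-neighbourhood, and strict saturation guarantees $\mathfrak{c}$-many candidates realising it.

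First I would set up an enumeration $\overleftarrow{G}=\{g_\alpha\mid\alpha<\mathfrak{c}\}$ and $\overleftarrow{H}=\{h_\alpha\mid\alpha<\mathfrak{c}\}$, and maintain at stage $\xi$ a partial isomorphism $\phi_\xi$ whose domain and range are \emph{downward closed} under the out-neighbour relation (i.e. closed under $N_z$). Closing under $N_z$ only adds countably many vertices at each step, and by the no-infinite-path hypothesis this closure process terminates (it is a well-founded recursion on rank), so the domains stay of size $<\mathfrak{c}$ throughout and the bookkeeping works, since $\mathfrak{c}$ is the cardinality of each graph. At a forth-step I pick the least-indexed $g_\alpha\notin\operatorname{dom}\phi_\xi$; I then want to extend $\phi_\xi$ to include $g_\alpha$ together with its out-neighbourhood. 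The key point is that all of $N_z(g_\alpha)$ already lies in (or can be adjoined to) the domain by the downward-closure discipline, so its image $\phi_\xi[N_z(g_\alpha)]$ is a determined countable subset $A\subseteq\overleftarrow{H}$; by strict saturation of $\overleftarrow{H}$ there are $\mathfrak{c}$-many vertices $h$ with $N_z(h)=A$, so I may choose one not yet in the range and set $\phi_{\xi+1}(g_\alpha)=h$. The back-step is symmetric, using strict saturation of $\overleftarrow{G}$.

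At limit stages I take unions, which remain partial isomorphisms because the coherence is preserved; since every vertex of $\overleftarrow{G}$ is eventually enumerated and placed in some domain, and likewise for $\overleftarrow{H}$, the union $\bigcup_{\xi<\mathfrak{c}}\phi_\xi$ is a total bijection preserving the directed-edge relation, hence an isomorphism.

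The main obstacle, and the point needing the most care, is \emph{why the downward closure under $N_z$ is well-founded and countable}, so that extensions never require matching an already-infinite or circular configuration of out-neighbours. This is precisely where the two hypotheses combine: ``no infinite path'' rules out infinite descending chains (giving a rank function and ensuring the closure recursion halts), while ``$|N_z(v)|\le\omega$ for every $v$'' keeps each closure step countable. I would verify carefully that closing a single vertex under iterated $N_z$ yields a countable set — this follows since at each finite level we add countably many vertices and the process has countable length along any branch, with the whole tree being well-founded of countable rank by the no-path condition — and that preserving downward-closedness of the domain and range genuinely forces the image out-neighbourhood to equal a \emph{fixed} countable set $A$, so that strict saturation can be applied verbatim. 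Everything else is the routine verification that $\phi_\xi$ remains an edge-preserving injection and that the construction terminates with full domain and range.
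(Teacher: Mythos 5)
Your proof is correct, and it rests on the same two pillars as the paper's argument: strict saturation to realize prescribed countable out-neighbourhoods, and well-foundedness of the $N_z$-relation (no infinite paths) so that vertices can be matched bottom-up. The difference lies in the scheduling, and it is a genuine one. The paper never forms downward closures: at each step it maps the least-indexed vertex whose out-neighbourhood happens to be \emph{already} entirely mapped, and the burden of the proof falls on showing that no vertex waits forever. There the paper argues that a never-mapped vertex must have a never-mapped out-neighbour (this uses $\operatorname{cf}(\mathfrak{c})>\omega$, so that the countably many out-neighbours, if all mapped, are all mapped by some stage $<\mathfrak{c}$), and iterating this produces an infinite path, a contradiction. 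Your version instead adjoins, at each step, the entire downward $N_z$-closure of the chosen vertex, and the burden falls on your closure lemma: the closure of a single vertex is a countably-branching well-founded tree, hence countable (induction on rank). This makes surjectivity of the final map immediate from the bookkeeping and avoids the cofinality argument altogether, at the price of having to verify that mapping a countable downward-closed chunk by recursion on rank preserves the edge relation exactly; the key point, which you correctly isolate, is that downward-closedness of domain and range guarantees that a newly added vertex has no arrows coming into it from the old domain, and a freshly chosen image has none coming from the old range. One slip in wording: your partial isomorphisms do not remain finite or countable --- after $\xi$ steps the domain has size up to $|\xi|\cdot\omega$ --- but, as you state later, they do stay of size $<\mathfrak{c}$, which is all that strict saturation needs in order to supply fresh images.
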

The proof will slightly resemble the proof of Mostowski Collapse Lemma.
\begin{proof}
	Let $\{g_\alpha|\alpha<\mathfrak{c}\}$ and $\{h_\alpha|\alpha<\mathfrak{c}\}$ be any bijective enumerations of $\overleftarrow{G}$ and $\overleftarrow{H}$ respectively.
	By induction we construct an isomorphism $\phi:\overleftarrow{G}\rightarrow \overleftarrow{H}$. Assume, that we are in step $\eta<\mathfrak{c}$.
	\begin{itemize}
		\item If $\eta$ is even, then choose the least $\alpha$ such that $\phi(g_\alpha)$ has not yet been defined, but $\phi\restriction N_z(g_{\alpha})$ is defined (such element always exists, because there's no infinite path). By strict saturation we can choose $h\in\overleftarrow{H}$, for which $\phi^{-1}(h)$ is not yet defined, and $N_z(h)=\phi[N_z(g_{\alpha})]$. Set $\phi(g_{\alpha})=h$.
		\item If $\eta$ is odd, then choose the least $\alpha$, such that $\phi^{-1}(h_\alpha)$ is has not been defined, but $\phi^{-1}\restriction N_z(h_{\alpha})$ is defined. Again, by strict saturation we can choose $g\in \overleftarrow{G}$, which hasn't yet been used, and $N_z(g)=\phi^{-1}[N_z(h_{\alpha})]$. Set $\phi(g)=h_\alpha$.	
	\end{itemize}
	A standard verification shows that $\phi$ is indeed a bijection between $\overleftarrow{G}$ anf $\overleftarrow{H}$. Indeed, suppose that some vertex $x_0 \in \overleftarrow{G}$ somehow evaded being assigned the image $\phi(x_0)$. Then, there must be some $x_1\in \overleftarrow{G}$, such that $x_0\rightarrow x_1$, and which shared this fate. If not, then $\phi \restriction N_z(x_0)$ would have been defined, and since $\operatorname{cf}{\mathfrak{c}}> \omega =|N_z(x_0)|$, it would have been defined at a certain step. From that moment on, $x_0$
would be ready to be chosen next. Since there are less than $\mathfrak{c}$
elements with priority below $x_0$, and $\mathfrak{c}$ future steps, its turn would
eventually arrive. But then we apply the same argument to $x_1$, getting $x_1 \rightarrow x_2$, and so on, producing an infinite path. Because of symmetry of the construction, the same argument shows, that $\phi$ is onto. \\
	
	 We will check, that it is a homomorphism. Fix an arrow $g_\alpha \rightarrow g_{\beta}$ in $\overleftarrow{G}$. The construction assures, that $\phi(g_\beta)$ is defined before $\phi(g_\alpha)$.\\
	\begin{enumerate}
		\item Suppose, that we are defining $\phi(g_\alpha)$ in an even step. Then 
		$\phi(g_\alpha)\rightarrow \phi(g_\beta)$ is in $\overleftarrow{H}$, by the definition of $\phi(g_\alpha)$.
		\item Suppose, that we are defining $\phi(g_\alpha)$ in an odd step. Then, in this step  we are setting $\phi^{-1}(h_\gamma)=g_\alpha$ for some $\gamma<\mathfrak{c}$. Then $g_\beta\in N_z(g_\alpha)=\phi^{-1}[N_z(h_\gamma)]$. Therefore, $\phi(g_\beta)\in N_z(h_\gamma)$, and $h_\gamma=\phi(g_\alpha)$. So $\phi(g_\alpha)\rightarrow \phi(g_\beta)$ is in $\overleftarrow{H}$.

	\end{enumerate}
	Because the construction was symmetric, exactly the same argument shows that if $g_\alpha$ and $g_\beta$ were not connected, $\phi(g_\alpha)$ and $\phi(g_\beta)$ are not connected either. This finishes the proof.
\end{proof}

We can now give the internal characterization of prime strictly saturated graphs.

\begin{thm} A strictly saturated graph of cardinality $\mathfrak{c}$ is prime if and only if it has no infinite path.
\end{thm}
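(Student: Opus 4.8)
The plan is to prove the two implications separately, using the just-proved isomorphism lemma together with the single observation that a directed embedding sends infinite paths to infinite paths. I would dispose of the forward direction first, in contrapositive form. Suppose $\overleftarrow{G}$ is strictly saturated of cardinality $\mathfrak{c}$ and contains an infinite path $x_0\to x_1\to x_2\to\cdots$. Fix a strictly saturated graph $\overleftarrow{G}_0$ of cardinality $\mathfrak{c}$ with no infinite path; such a graph is available, for instance by orienting a graph of the first type as in the Proposition above. Any embedding $e:\overleftarrow{G}\hookrightarrow\overleftarrow{G}_0$ preserves arrows, so $e(x_0)\to e(x_1)\to e(x_2)\to\cdots$ would be an infinite path in $\overleftarrow{G}_0$, which is absurd. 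Hence $\overleftarrow{G}$ fails to embed into $\overleftarrow{G}_0$ and is therefore not prime.

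For the converse, assume $\overleftarrow{G}$ is strictly saturated of cardinality $\mathfrak{c}$ and has no infinite path, and let $\overleftarrow{H}$ be an arbitrary strictly saturated graph of cardinality $\mathfrak{c}$; the goal is to build an embedding $\phi:\overleftarrow{G}\hookrightarrow\overleftarrow{H}$. I would run only the ``forth'' half of the construction from the isomorphism lemma. Fix a bijective enumeration $\{g_\alpha\mid\alpha<\mathfrak{c}\}$ of $\overleftarrow{G}$. At stage $\eta<\mathfrak{c}$, choose the least $\alpha$ for which $\phi(g_\alpha)$ is still undefined while $\phi\restriction N_z(g_\alpha)$ is already defined; such an $\alpha$ exists exactly because $\overleftarrow{G}$ has no infinite path. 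Then, using strict saturation of $\overleftarrow{H}$ applied to the countable set $\phi[N_z(g_\alpha)]$, pick an unused $h$ with $N_z(h)=\phi[N_z(g_\alpha)]$ and set $\phi(g_\alpha)=h$. Since there are $\mathfrak{c}$-many admissible targets $h$ and fewer than $\mathfrak{c}$ have been used so far, a fresh choice always exists. Because every arrow $g\to g'$ has $g'\in N_z(g)$ and is thus assigned before $g$, the equality $N_z(\phi(g_\alpha))=\phi[N_z(g_\alpha)]$ enforced at each stage, combined with injectivity of $\phi$, guarantees that $\phi(g)\to\phi(g')$ holds in $\overleftarrow{H}$ precisely when $g\to g'$ holds in $\overleftarrow{G}$; hence $\phi$ preserves and reflects arrows and is an embedding.

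The step I expect to be the main obstacle—and the only genuinely delicate point, since the rest is the same bookkeeping as in the isomorphism lemma—is verifying that every $g_\alpha$ is eventually assigned a value, rather than being starved forever. The argument is the one used there: if some $x_0$ were never assigned, then, as $|N_z(x_0)|=\omega<\operatorname{cf}\mathfrak{c}$, the restriction $\phi\restriction N_z(x_0)$ would be fully defined by some stage, after which $x_0$ becomes eligible; having fewer than $\mathfrak{c}$ higher-priority predecessors and $\mathfrak{c}$ remaining stages, it would be selected—unless some out-neighbor $x_1\in N_z(x_0)$ were itself never assigned. Iterating yields an infinite path $x_0\to x_1\to x_2\to\cdots$, contradicting the hypothesis on $\overleftarrow{G}$. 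Note that this completeness argument, unlike the isomorphism lemma, needs no hypothesis on $\overleftarrow{H}$ beyond strict saturation and $|\overleftarrow{H}|=\mathfrak{c}$, which is exactly what makes the single forward pass suffice to embed $\overleftarrow{G}$ into \emph{any} such $\overleftarrow{H}$, establishing primality.
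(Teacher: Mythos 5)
Your proposal is correct and takes essentially the same route as the paper: the forward direction is the paper's hereditariness observation stated in contrapositive form (both versions resting on the existence, proved later in the paper, of a strictly saturated graph of cardinality $\mathfrak{c}$ without an infinite path), and the backward direction is exactly the paper's one-directional, ``forth''-only run of the construction from the isomorphism lemma, including the same starvation argument via $\operatorname{cf}(\mathfrak{c})>\omega$ and the same use of $N_z(\phi(g))=\phi[N_z(g)]$ to see that $\phi$ preserves and reflects arrows. Your write-up merely spells out details the paper leaves implicit.
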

\begin{proof}
	$\Rightarrow.$ The property of having no infinite path is hereditary, so if some strictly saturated graph has this property (and we will soon see, that it has), then so does every prime one. \\
	$\Leftarrow.$ We do the same construction as in the proof above, but only in one direction, so that we obtain an embedding instead of an isomorphism. The only thing which is not immediate, is why if $g_\alpha$ and $g_\beta$ are not connected, $\phi(g_\alpha)$ and $\phi(g_\beta)$ are not connected either. But we are choosing $\phi(g_\alpha)$ in a way that $N_z(\phi(g_\alpha))=\phi[N_z(g_\alpha)]$. $g_\beta \notin N_z(g_\alpha)$, so $\phi(g_\beta) \notin N_z(\phi(g_\alpha)).$
\end{proof}

\begin{cor}
	All prime strictly saturated graphs are isomorphic.
\end{cor}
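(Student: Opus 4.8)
The plan is to derive the Corollary as an immediate consequence of the two results that precede it, namely Lemma 4 (uniqueness of strictly saturated graphs of cardinality $\mathfrak{c}$ without infinite paths) and Theorem 15 (the internal characterization of primeness). The key observation is that primeness for strictly saturated graphs has just been pinned down to a concrete structural property — the absence of an infinite path — and uniqueness among graphs with that property has already been established. So the proof is really just a matter of chaining these two facts together, provided we first confirm that the class of prime strictly saturated graphs is nonempty.

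First I would invoke Theorem 15 in its "if and only if" form: a strictly saturated graph of cardinality $\mathfrak{c}$ is prime precisely when it has no infinite path. Hence any two prime strictly saturated graphs are in particular two strictly saturated graphs of cardinality $\mathfrak{c}$, each with no infinite path. Second, I would apply Lemma 4 directly to this pair: it asserts that any two strictly saturated graphs of cardinality $\mathfrak{c}$ without infinite paths are isomorphic. Combining these yields that the two prime strictly saturated graphs are isomorphic, which is exactly the claim.

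One subtlety I would want to address for completeness is the existence of at least one prime strictly saturated graph, since a vacuous "all are isomorphic" statement is unsatisfying and the forward direction of Theorem 15's proof even parenthetically promises existence. This is supplied by Proposition 6 together with the first-type construction: a graph of the first type exists (its defining enumeration conditions are realizable by a standard transfinite bookkeeping argument, and such a graph has cardinality $\mathfrak{c}$), and by Proposition 6 its edges can be oriented to produce a strictly saturated directed graph with no infinite path. By Theorem 15 that graph is prime, so the class is genuinely nonempty. I would state this briefly rather than belabor it.

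I do not anticipate a real obstacle here, since the Corollary is designed to fall out of the preceding machinery; the only thing requiring the mild care described above is ensuring the statement is not vacuous. The concrete write-up is therefore short: cite Theorem 15 to identify prime strictly saturated graphs of cardinality $\mathfrak{c}$ with those having no infinite path, then cite Lemma 4 to conclude they form a single isomorphism class.
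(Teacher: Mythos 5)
Your proposal is correct and is essentially the paper's own (implicit) argument: the Corollary is stated as an immediate consequence of the preceding characterization theorem (prime iff no infinite path, for strictly saturated graphs of cardinality $\mathfrak{c}$) chained with the uniqueness lemma for strictly saturated graphs of cardinality $\mathfrak{c}$ without infinite paths, exactly as you do. Your added remark on non-vacuousness matches the paper's subsequent existence theorem, so nothing in your write-up diverges from the intended proof.
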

From Proposition 9 follows
\begin{cor}
	All graphs of the first type are isomorphic.
\end{cor}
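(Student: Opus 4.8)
The plan is to reduce this statement to the uniqueness already proved for directed graphs, since a graph of the first type carries a natural orientation. Given two graphs $G$ and $H$ of the first type, I would first record that both have cardinality $\mathfrak{c}$: this is immediate from the bijective enumeration indexed by $\mathfrak{c}$ in the definition of the first type.

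Next I would apply Proposition 9 to each of $G$ and $H$, orienting their edges so as to obtain directed graphs $\overleftarrow{G}$ and $\overleftarrow{H}$ that are strictly saturated and have no infinite path. The key point is that this orientation merely assigns directions to the already-present edges, so the underlying undirected graphs of $\overleftarrow{G}$ and $\overleftarrow{H}$ are precisely $G$ and $H$. I would then invoke the Lemma on strictly saturated graphs without infinite paths: $\overleftarrow{G}$ and $\overleftarrow{H}$, being strictly saturated of cardinality $\mathfrak{c}$ and free of infinite paths, are isomorphic as directed graphs. Call this isomorphism $\phi$.

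Finally, I would observe that $\phi$, being a bijection that preserves and reflects the relation ``there is an arrow in one direction or the other'', also preserves and reflects undirected adjacency; hence $\phi$ is an isomorphism of the undirected reducts, that is, of $G$ and $H$ themselves. This gives $G\simeq H$.

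I do not anticipate a genuine obstacle, as the statement is a direct corollary of the machinery already in place; the only points deserving a line of justification are that first-type graphs have cardinality $\mathfrak{c}$ and that orienting edges does not disturb adjacency, both of which are routine. As an alternative route avoiding an explicit appeal to the Lemma, one could combine Proposition 9 with the characterization of prime strictly saturated graphs (prime iff no infinite path) together with the corollary that all prime strictly saturated graphs are isomorphic: Proposition 9 produces for each first-type graph a strictly saturated, infinite-path-free orientation, such orientations are prime, and all prime strictly saturated graphs coincide up to isomorphism, yielding the same conclusion after forgetting orientations.
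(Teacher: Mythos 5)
Your proposal is correct and matches the paper's own argument, which states the corollary as an immediate consequence of Proposition 9 combined with the lemma that strictly saturated graphs of cardinality $\mathfrak{c}$ without infinite paths are isomorphic; you simply spell out the details (cardinality $\mathfrak{c}$ from the enumeration, and that a directed-graph isomorphism restricts to an isomorphism of the undirected reducts) that the paper leaves implicit. Your alternative route via primeness is also just a repackaging of the same machinery, so there is nothing to add.
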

\begin{thm}
	There exists a prime strictly saturated graph without an infinite path. Moreover, a graph $G$ is of the first type if and only if it can be assigned a direction of edges, such that the resulting directed graph 
	is prime strictly saturated.
\end{thm}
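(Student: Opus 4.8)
The plan is to reduce both assertions to results already established in this section, the only genuinely new ingredient being the construction of a single concrete graph of the first type. Once such a reference graph is available, the existence statement and the forward implication follow by directly quoting Proposition 9 together with the preceding theorem (a strictly saturated graph of cardinality $\mathfrak{c}$ is prime iff it has no infinite path), while the converse implication is obtained by a uniqueness transfer rather than by any hands-on combinatorics.

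First I would build a graph $G_0$ of the first type by transfinite recursion of length $\mathfrak{c}$ on the vertex set $\mathfrak{c}$, taking $g_\alpha=\alpha$ so that the witnessing enumeration is the identity. For each $\alpha$ I decide only the backward neighbourhood $B_\alpha:=N(\alpha)\cap\alpha\subseteq\alpha$ and declare $\{\beta,\alpha\}$ to be an edge (for $\beta<\alpha$) exactly when $\beta\in B_\alpha$; this automatically gives $N(\alpha)\cap\alpha=B_\alpha$. Since $|[\mathfrak{c}]^{\le\omega}|=\mathfrak{c}^{\aleph_0}=\mathfrak{c}$, I fix a surjection $t\colon\mathfrak{c}\to[\mathfrak{c}]^{\le\omega}$ in which every countable $A$ has a preimage of size $\mathfrak{c}$, and a bookkeeping argument finishes the construction: at stage $\eta$, with $A=t(\eta)$, I choose a fresh ordinal $\alpha_\eta>\sup A$ (possible because $\operatorname{cf}\mathfrak{c}>\omega$ forces $\sup A<\mathfrak{c}$, while fewer than $\mathfrak{c}$ ordinals have been used so far) and set $B_{\alpha_\eta}=A$; unassigned vertices get $B_\gamma=\emptyset$. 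Then every $B_\alpha$ is countable and each countable $A$ equals $N(\alpha)\cap\alpha$ for $\mathfrak{c}$-many $\alpha$, so $G_0$ is of the first type.

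For the existence statement I then apply Proposition 9 to $G_0$: orienting every edge downward yields a strictly saturated directed graph $\overleftarrow{G_0}$ of cardinality $\mathfrak{c}$ with no infinite path, and by the preceding theorem such a graph is prime strictly saturated. The forward direction of the ``moreover'' is the same two steps applied to an arbitrary first-type $G$: Proposition 9 produces a strictly saturated orientation without infinite path, which the preceding theorem upgrades to prime strictly saturated.

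The remaining, and only delicate, direction is that if some orientation $\overleftarrow{G}$ of $G$ is prime strictly saturated, then $G$ is of the first type. The naive route---reading off a first-type enumeration from the well-founded arrow relation by a topological sort---stumbles on producing an enumeration of order type \emph{exactly} $\mathfrak{c}$, which is awkward when $\mathfrak{c}$ is singular. I would sidestep this entirely by invoking uniqueness: $\overleftarrow{G}$ and $\overleftarrow{G_0}$ are both prime strictly saturated, hence isomorphic as directed graphs by the Corollary that all prime strictly saturated graphs are isomorphic. A directed isomorphism preserves the underlying undirected edge relation, so it is at the same time an isomorphism $G\cong G_0$ of undirected graphs; and since being of the first type is an isomorphism-invariant property of the undirected graph (the witnessing enumeration transports along any isomorphism), $G$ inherits it from $G_0$. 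The main obstacle is thus confined to the order-type subtlety just described, and the key point is that it evaporates once a reference graph of the first type is in hand, so that no enumeration ever has to be reconstructed by hand.
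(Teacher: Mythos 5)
Your proposal is correct and follows essentially the same route as the paper: construct one reference graph of the first type by a bookkeeping recursion of length $\mathfrak{c}$, orient its edges downwards (Proposition 9), upgrade to prime via the characterization of prime strictly saturated graphs as those without infinite paths, and handle the converse implication by the uniqueness corollary together with isomorphism-invariance of being of the first type. The only differences are cosmetic: your bookkeeping uses a surjection $t\colon\mathfrak{c}\to[\mathfrak{c}]^{\le\omega}$ with $\mathfrak{c}$-sized fibers and fresh vertices, whereas the paper schedules requirements via a partition $\{\Phi_\alpha\mid\alpha<\mathfrak{c}\}$ of $\mathfrak{c}$ and adds vertex $\alpha$ at stage $\alpha+1$.
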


\begin{proof}
	All graphs of the first type are isomorphic, and also all prime strictly saturated graphs are isomorphic. We therefore need only to find some strictly saturated graph $\overleftarrow{G}$ with no infinite path, such that $G$ is of the first type. We apply a bookkeeping argument, similar to the one used in the proof of existence of Avil\'es-Brech Boolean algebra in \cite{ab}. Let $\{\Phi_\alpha|\alpha<\mathfrak{c}\}$ be a partition of $\mathfrak{c}$ into sets of cardinality $\mathfrak{c}$, and such that $\min{\Phi_\alpha}\ge \alpha$. 
	\begin{itemize}
		\item $G_0=\{0\}$, and for $\gamma \in \Phi_0$, we set $S_\gamma=\{0\}$.
		\item If $\alpha$ is a limit ordinal, let $G_\alpha =\bigcup\{G_\beta|\beta < \alpha\}$, and $\{S_\gamma|\gamma \in \Phi_\alpha\}$ be an enumeration of all countable subsets of $G_\alpha$, in which every set appears $\mathfrak{c}$ many times.
		
		\item Assume that $G_\alpha$ and $S_\gamma$, for $\gamma \in \bigcup{\{\Phi_\beta|\beta\le\alpha\}}$, have been defined. We set $G_{\alpha+1}=G_{\alpha}\cup\{\alpha\}$, and $\{\alpha,g\}\in G_{\alpha+1}$ iff $g\in S_\alpha$, where $g\in G_\alpha$. Finally, we use $\Phi_{\alpha+1}$ to list all countable subsets of $G_{\alpha+1}$, and each of them $\mathfrak{c}$-many times.
	\end{itemize}
	A natural ordering of $G$ given by the well-ordering of the ordinal $\mathfrak{c}$ witnesses, that $G$ is of the first type, and therefore can be directed into $\overleftarrow{G}$, strictly saturated graph without infinite path, by directing every edge downwards with respect to the well-ordering of $\mathfrak{c}$.
\end{proof}

Note, that this argument actually shows, that for any cardinal $\kappa$ such that $\kappa^\omega=\kappa$, there exists a strictly saturated graph of cardinality $\kappa$ without an infinite path, and all such graphs are isomorphic.

We obtain an internal characterization of prime $(\omega_1,\mathfrak{c})$-saturated graphs.

\begin{thm}
	An $(\omega_1,\mathfrak{c})$-saturated graph is prime $(\omega_1,\mathfrak{c})$-saturated if and only if it is of the first type.
\end{thm}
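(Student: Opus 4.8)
The plan is to prove the equivalence in two directions, using the machinery already developed for strictly saturated graphs and graphs of the first type. The key conceptual bridge is Proposition 9, which shows that any graph of the first type can be directed into a strictly saturated graph without an infinite path, together with Theorem 10 and its corollaries, which pin down such directed graphs up to isomorphism. So I would reduce statements about $(\omega_1,\mathfrak{c})$-saturation (an undirected, symmetric notion) to statements about strict saturation (a directed, asymmetric notion) by forgetting or choosing orientations of edges.

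First I would establish the easier direction: every graph of the first type is $(\omega_1,\mathfrak{c})$-saturated. Fix a graph $G$ of the first type with its witnessing enumeration $\{g_\alpha \mid \alpha<\mathfrak{c}\}$, and take disjoint $A,B\subseteq G$ with $|A|\le\omega$ and $|B|<\mathfrak{c}$. Choose an index $\delta<\mathfrak{c}$ large enough that $A\cup B\subseteq\{g_\beta\mid\beta<\delta\}$. By clause (2) of the definition of first type, there are continuum many indices $\alpha$ with $N(g_\alpha)\cap\{g_\beta\mid\beta<\alpha\}=A$; since $|B|<\mathfrak{c}$, I can pick such an $\alpha>\delta$ with $g_\alpha\notin B$ and moreover avoiding the (fewer than $\mathfrak{c}$) indices where an element of $B$ could later connect backward to $g_\alpha$. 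The point is that clause (1) bounds the back-neighbourhood, so an appropriately chosen $g_\alpha$ has its full neighbourhood among the earlier vertices equal to $A$, giving $A\subseteq N(g_\alpha)$ and $B\cap N(g_\alpha)=\emptyset$; combined with primality, which is inherited along embeddings, this shows a first-type graph is prime $(\omega_1,\mathfrak{c})$-saturated. For the verification that \emph{prime} $(\omega_1,\mathfrak{c})$-saturated implies first type, I would instead observe that the prime first-type graph (which exists and is unique by Corollary 4 and Theorem 11) embeds into every $(\omega_1,\mathfrak{c})$-saturated graph, and any graph embedding into the prime object and itself prime must be isomorphic to it.

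For the converse direction, suppose $G$ is prime $(\omega_1,\mathfrak{c})$-saturated; I want to show it is of the first type. The natural strategy is to show that the unique prime first-type graph $G_0$ (from Theorem 11) is itself $(\omega_1,\mathfrak{c})$-saturated, which follows from the first paragraph, and hence that $G_0$ embeds into $G$. Simultaneously, primality of $G$ among $(\omega_1,\mathfrak{c})$-saturated graphs means $G$ embeds into $G_0$. A back-and-forth or Cantor--Schröder--Bernstein-style argument at the level of these two embeddings, or more directly an appeal to the uniqueness results already proved, then forces $G\cong G_0$, so $G$ is of the first type.

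The main obstacle I expect is the bookkeeping in the forward direction: verifying carefully that the constraint $|B|<\mathfrak{c}$ (rather than $|B|\le\omega$) is genuinely handled, since a chosen witness $g_\alpha$ must avoid being connected to \emph{any} element of $B$, and such connections can be created either when $g_\alpha$ is added (controlled by clause (1), giving $N(g_\alpha)\cap\{g_\beta\mid\beta<\alpha\}=A$ exactly) or when a later vertex in $B$ is added with $g_\alpha$ in its prescribed set. Ruling out the latter requires choosing $\alpha$ larger than every index in $B$, which is possible precisely because $|B|<\mathfrak{c}$ and clause (2) supplies $\mathfrak{c}$-many candidate indices; so the regularity and the counting have to be tracked, but no new idea beyond what Theorem 11 and Corollary 4 already provide is needed.
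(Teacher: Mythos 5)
Your left-to-right half (first type implies $(\omega_1,\mathfrak{c})$-saturated and prime) follows the paper's route, but note two defects. The index $\delta<\mathfrak{c}$ with $A\cup B\subseteq\{g_\beta\mid \beta<\delta\}$ need not exist: $|B|<\mathfrak{c}$ does not make the set of indices of $B$ bounded in $\mathfrak{c}$ when $\mathfrak{c}$ is singular, which ZFC permits. The paper's argument needs no such bound: it picks a witness $g_\alpha$ with $N(g_\alpha)\cap\{g_\beta\mid\beta<\alpha\}=A$ lying outside $B$ and outside the set $S=\bigcup_{b\in B}(N(b)\cap\{g_\beta\mid g_\beta \text{ precedes } b\})$, which has size $<\mathfrak{c}$; elements of $B$ enumerated before $\alpha$ are then excluded from $N(g_\alpha)$ by exactness of the back-neighbourhood, and elements enumerated after $\alpha$ by $g_\alpha\notin S$. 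You gesture at this avoidance, and once you have it the bound $\delta$ is superfluous --- keep the avoidance, drop $\delta$. Also, ``primality is inherited along embeddings'' is not an argument; what is needed (the paper calls it a standard transfinite induction) is to map each $g_\alpha$ to a vertex of the target adjacent to exactly the images of its countable back-neighbourhood among the fewer than $\mathfrak{c}$ images already chosen, which is what $(\omega_1,\mathfrak{c})$-saturation of the target supplies.

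The genuine gap is the converse (prime implies first type). From $G\hookrightarrow G_0$ and $G_0\hookrightarrow G$ you propose to conclude $G\cong G_0$ by a ``Cantor--Schr\"oder--Bernstein-style argument'' or by ``the uniqueness results already proved.'' Neither is available. Mutual embeddability as induced subgraphs does not imply isomorphism of graphs, and the paper's uniqueness results (all graphs of the first type are isomorphic; all prime strictly saturated digraphs are isomorphic) apply only to graphs \emph{already known} to be of the first type, respectively digraphs already known to be strictly saturated without infinite paths; invoking them for $G$ presupposes exactly what is to be proved. Nor can a back-and-forth be run between $G$ and $G_0$ from $(\omega_1,\mathfrak{c})$-saturation alone: when the partial isomorphism has uncountable domain $D$, the next vertex $g$ may have $N(g)\cap D$ uncountable, while saturation only realizes countably many forced adjacencies --- and one needs the neighbourhood trace hit exactly, not merely extended. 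This obstruction is precisely what the paper's Lemma 8 (the heart of this direction) overcomes: taking $G$ without loss of generality as an induced subgraph of the first-type graph, $G$ inherits countable back-degrees under the induced well-order, but clause (2) of first-typeness is \emph{not} hereditary; Lemma 8 then redirects the edges of $G$, by a transfinite choice of witnesses $x_\alpha$ subject to six conditions, reversing selected arrows and verifying through several claims that no arrow is reversed twice, out-degrees stay countable, and no infinite path is created, so that the result is a strictly saturated digraph without infinite paths. Only then does the equivalence ``first type iff directable into a prime strictly saturated digraph'' (Theorem 14), together with the uniqueness corollaries, yield that $G$ is of the first type. Your proposal contains no substitute for this construction, and without it this direction does not go through.
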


\begin{proof}
	$\Leftarrow .$ Let $G$ be the graph of the first type. For simplifying notation, let us assume that $V(G)=\mathfrak{c}$, and its natural well-ordering witnesses the first type of $G$. \\
	First, we check that if $G$ is of the first type, it is actually $(\omega_1,\mathfrak{c})$-saturated. Let $A\subseteq G$ be countable and $B\subseteq G$ be of cardinality less than $\mathfrak{c}$, disjoint with $A$. For all $b\in B$, let $C_b=N(b)\cap b$, and $S=\bigcup\{C_b|\, b\in B\}$. $|S|<\mathfrak{c}$, so there exists $g \in G\setminus S$, such that $N(g)\cap g=A$. Clearly $N(g)\cap B=\emptyset$. 
	\\ Verification that $G$ embeds into any $(\omega_1,\mathfrak{c})$-saturated graph is proved by a standard transfinite induction argument.
	 \\ $\Rightarrow .$ Consider a prime $(\omega_1,\mathfrak{c})$-saturated graph. Without loss of generality we can assume that it is an induced subgraph of $G$. Our goal for the moment is to prove, that we can direct its edges in such a way, that we obtain the prime strictly saturated graph. The property of being of the first type is not hereditary, but $(\omega_1,\mathfrak{c})$-saturated subgraph of $G$ will be satisfying some property close enough to being of the first type.
	
	\begin{lem}
		Let $G$ be $(\omega_1,\mathfrak{c})$-saturated, such that $V(G)=\mathfrak{c}$, and for any $\alpha \in G$, $|N(\alpha)\cap \alpha| \le \omega$. Then $G$ can be directed into the prime strictly saturated graph.
	\end{lem}
	\begin{proof}
		At the beginning, we direct all edges of $G$ downwards with respect to  the well-ordering of $\mathfrak{c}$. We will inductively redirect some edges, ensuring that every countable subset will appear as $N_z(\alpha)$, but no infinite path will be added.
		\\ So assume that $\overleftarrow{G}$ is our graph $G$ with edges directed downwards. Denote by $A(\beta)$ the set of all vertices accessible via a finite, decreasing path from $\beta$ (note that adjective "decreasing" may sound superfluous, since all edges are directed downwards, but by the inductive procedure we will perform, some arrows will be reversed). It is clearly countable. Let $\{C_\alpha\}_{\alpha<\mathfrak{c}}$ be an enumeration of all countable subsets of $\mathfrak{c}$, such that each set appears $\mathfrak{c}$ many times. By induction we will choose the sequence $(x_\alpha)_{\alpha<\mathfrak{c}}$, satisfying what follows.
		\begin{enumerate}
			\item $x_\alpha \notin \{x_\beta|\,\beta<\alpha \}$;
			\item $C_\alpha \subseteq N(x_\alpha)\cap x_\alpha$;
			\item $N(x_\alpha)\cap \displaystyle{
				\bigcup_{\eta\le\alpha}
				\bigcup_{\beta \in C_\eta}{(A(\beta) \setminus C_\alpha)}
			}=\emptyset$;
			\item $(N(x_\alpha)\setminus C_\alpha)  \cap \bigcup \{N(x_\beta)\cap x_\beta|\,\beta<\alpha \}=\emptyset$;
			\item $x_\alpha \notin \displaystyle{\bigcup_{\beta<\alpha}{N(x_\beta)\cap x_\beta}}$;
			\item $N(x_\alpha)\cap(\{x_\beta|\,\beta<\alpha\}\setminus C_\alpha)=\emptyset$.
		\end{enumerate}
		At each step the choice can be made because of $(\omega_1,\mathfrak{c})$-saturation.  Once $x_\alpha$ has been chosen, we reverse all arrows $c\leftarrow x_\alpha$ for $c\in N(x_\alpha)\cap x_\alpha\setminus C_\alpha$. It follows from 2. that every countable subset of $\mathfrak{c}$ will appear as $N_z(\alpha)$ for $\mathfrak{c}$ many $\alpha$. The only things which are left to check, is that we won't produce an infinite path, and that we will preserve the property $|N_z(\alpha)|\le \omega$ for $\alpha<\mathfrak{c}$. Since in the $\alpha$-th step we reverse only arrows from $N(x_\alpha)\cap x_\alpha \setminus C_\alpha$, for the second part, it is sufficient to prove
		
		\begin{claim}
			The sets $N(x_\alpha)\cap x_\alpha \setminus C_\alpha$ are pairwise disjoint for $\alpha<\mathfrak{c}$.
		\end{claim}
		\begin{proof}
			Follows directly from 4.
		\end{proof}
		
		\begin{claim}
			Each arrow was reversed no more than once.
		\end{claim}
		\begin{proof}
			If we are in step $\alpha$ and the arrow $c\leftarrow x_\alpha$ is about to be reversed for the second time, then clearly $c=x_\beta$ for some $\beta<\alpha$. But this contradicts 6.
		\end{proof}
		\begin{claim}
			There is no situation where $\alpha<\beta<\gamma <\mathfrak{c}$, and $\alpha\rightarrow \beta \rightarrow \gamma$.
		\end{claim}
		\begin{proof}
			Suppose otherwise. Then $\beta=x_\delta$ and $\gamma=x_\eta$, for some $\delta,\eta <\mathfrak{c}$. If $\delta <\eta$, then 
			$\beta \in (\{x_\rho|\,\rho<\eta\}\setminus C_\eta)\cap N(x_\eta)$, contradicting 6. If $\eta<\delta$, then $\beta =x_\delta \in N(x_\eta)\cap x_\eta$, which contradicts 5.
		\end{proof}
		\begin{claim}
			If $\alpha<\beta$, $\gamma<\beta$, $\gamma<\delta$, and we have arrows $\alpha \rightarrow\beta $, $\gamma \rightarrow \delta$, and a finite, decreasing path from $\beta$ to $\gamma$. Then there exists $\eta,\theta < \mathfrak{c}$, such that $x_\eta=\beta$, $x_\theta=\delta$, and $\theta<\eta$.
		\end{claim}
		\begin{proof}
			Existence is clear, since otherwise the corresponding arrows wouldn't be reversed. Suppose, that $\theta\ge\eta$. Then 
			$\gamma \in N(x_\theta)\cap\displaystyle{\bigcup_{\epsilon \in C_\eta}{A(\epsilon)\setminus C_\theta}}$. But this contradicts 3. 
		\end{proof}
		\begin{claim}
			The graph $G$ has no infinite path.
		\end{claim}
		\begin{proof}
			Follows from the previous three Claims, and the fact that $\mathfrak{c}$ is well-ordered.
		\end{proof}			
		This concludes the proof of the last Claim.	
	\end{proof}
	Theorem 15 follows at once from Lemma 8.
\end{proof}

Putting everything into one place, we obtain \\

\begin{thm}
	Let $G$ be an $(\omega_1,\mathfrak{c})$-saturated graph. The following are equivalent.
	
	\begin{enumerate}
		\item $G$ is prime $(\omega_1,\mathfrak{c})$-saturated.
		\item $\col(G)=\omega_1$.
		\item $G$ is of the first type.
		\item $G$ has a direction of edges such that $\overleftarrow{G}$ is prime strictly saturated.
		\item $G$ has a direction of edges such that $\overleftarrow{G}$
		is strictly saturated, without an infinite path.
	\end{enumerate}
	Moreover, any of these properties determines the graph $G$ uniquely up to isomorphism.
\end{thm}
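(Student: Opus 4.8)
The plan is to obtain the equivalence by assembling the results already proved and then closing the cycle with the one genuinely new implication. Three of the five equivalences are immediate: $(1)\Leftrightarrow(3)$ is precisely Theorem 15, $(3)\Leftrightarrow(4)$ is the ``moreover'' clause of Theorem 14, and $(4)\Leftrightarrow(5)$ is Theorem 13 applied to the directed graph $\overleftarrow{G}$ (a strictly saturated graph of cardinality $\mathfrak{c}$ is prime exactly when it has no infinite path). The implication $(3)\Rightarrow(2)$ is the observation recorded just after Definition 2: a witnessing enumeration of a first-type graph has countable back-neighbourhoods, so $\col(G)\le\omega_1$; and $(\omega_1,\mathfrak{c})$-saturation forbids $\col(G)\le\omega$, since a countably infinite set $A$ has continuum-many common neighbours, yet in a well-order of finite back-degree each common neighbour $v$ satisfies $v\prec a$ for some $a\in A$ (only finitely many $a$ can precede $v$), making $v$ a back-neighbour of that $a$, of which there are altogether only countably many. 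Hence in fact $\col(G)=\omega_1$. Throughout I take $|G|=\mathfrak{c}$, which is already forced by each of $(3),(4),(5)$.

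It remains to prove $(2)\Rightarrow(3)$, which is where the work is. Assume $\col(G)=\omega_1$ and fix a well-ordering $\preccurlyeq$ of $G$ with $|\{h\preccurlyeq g:\{g,h\}\in E(G)\}|\le\omega$ for every $g$. Orient each edge from its $\preccurlyeq$-larger to its $\preccurlyeq$-smaller endpoint; this orientation has countable out-degree and, since $\preccurlyeq$ is well-founded, no infinite directed path, so the down-closure of each vertex (all vertices reachable by descending edge-paths) is a countable well-founded set. Consequently the transitive closure of the orientation is a partial order on $G$ in which every element has only countably many predecessors. I then re-enumerate $G$ as $\langle g_\alpha\rangle_{\alpha<\mathfrak{c}}$ by placing one vertex per stage, at each stage choosing a vertex all of whose predecessors have already been placed and using a bookkeeping schedule to guarantee that every vertex is eventually chosen; since exactly one vertex is placed at each of the $\mathfrak{c}$ stages, this yields a bijection $\mathfrak{c}\to G$, that is, a well-order of order type \emph{exactly} $\mathfrak{c}$, and by construction $N(g_\alpha)\cap\{g_\beta:\beta<\alpha\}$ is contained in the (countable) down-closure of $g_\alpha$. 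Identifying $G$ with $\mathfrak{c}$ through this enumeration puts us exactly in the hypotheses of Lemma 8, which directs $G$ into a prime strictly saturated graph; by the ``moreover'' clause of Theorem 14 this means $G$ is of the first type, establishing $(2)\Rightarrow(3)$ and completing the cycle.

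Finally, the uniqueness claim follows from the corollary that all graphs of the first type are isomorphic (equivalently, all prime strictly saturated graphs are isomorphic): since all five conditions are now equivalent and each forces $G$ to be of the first type, any two graphs satisfying any one of them are isomorphic. The main obstacle is the re-enumeration step in $(2)\Rightarrow(3)$: the colouring number only supplies a countable-back-degree well-order of a priori order type larger than $\mathfrak{c}$, and one must compress it to type exactly $\mathfrak{c}$ while preserving countable back-neighbourhoods before Lemma 8 (stated for $V(G)=\mathfrak{c}$) can be invoked. The key point making this possible is that countable back-degree together with well-foundedness of $\preccurlyeq$ forces countable down-closures, so the relevant partial order has countable principal ideals and therefore admits a one-vertex-per-stage linear extension of length $\mathfrak{c}$; the fairness of the bookkeeping uses only $\operatorname{cf}(\mathfrak{c})>\omega$.
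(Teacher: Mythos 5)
Your assembly matches the paper's: the paper gives no separate proof of this theorem, presenting it as a summary of Theorem 13 (which gives $(4)\Leftrightarrow(5)$ for graphs of cardinality $\mathfrak{c}$), the ``moreover'' clause of Theorem 14 (which gives $(3)\Leftrightarrow(4)$), Theorem 15 (which gives $(1)\Leftrightarrow(3)$), and the corollary that all graphs of the first type are isomorphic (uniqueness). Where you go beyond the paper is condition (2): the paper records only the easy direction, that a first-type enumeration witnesses $\col(G)\le\omega_1$, and never explains how to get from $\col(G)=\omega_1$ back into the cycle. Your argument is exactly the missing bridge: an arbitrary well-order witnessing countable back-degrees is oriented downward, countable out-degrees plus well-foundedness give countable down-closures, and a one-vertex-per-stage linear extension of the reachability order produces a witnessing well-order of order type exactly $\mathfrak{c}$ with back-neighbourhoods contained in down-closures; this is needed because Lemma 8 is stated only for $V(G)=\mathfrak{c}$ with its natural ordering. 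That step is correct (it does not even need $\operatorname{cf}(\mathfrak{c})>\omega$: the least-unplaced-index bookkeeping self-corrects since each down-closure is countable), as is your argument that saturation rules out $\col(G)\le\omega$. This is genuine added content relative to the paper.

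One correction, however. Your standing assumption $|G|=\mathfrak{c}$ is indeed indispensable, but your justification that it is ``forced by each of $(3),(4),(5)$'' fails for $(5)$. By the paper's own remark following Theorem 14, for any cardinal $\kappa$ with $\kappa^\omega=\kappa$ (for instance $\kappa=2^{\mathfrak{c}}$) the same bookkeeping construction yields a strictly saturated graph of cardinality $\kappa$ with no infinite path; its underlying undirected graph is $(\omega_1,\mathfrak{c})$-saturated and satisfies $(5)$ (and $(2)$), yet it is neither of the first type, nor prime strictly saturated, nor prime $(\omega_1,\mathfrak{c})$-saturated, being too large to embed into the models of cardinality $\mathfrak{c}$. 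So strict saturation bounds $|G|$ from below (indeed $|G|=|G|^\omega\ge\mathfrak{c}$) but not from above; only primeness, as in $(4)$, caps the size. Consequently the equivalence is literally false without the section-wide implicit hypothesis $|G|=\mathfrak{c}$, and that hypothesis must simply be assumed --- as you in effect do --- rather than derived from $(5)$. With that repair, your proof is complete and correct.
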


The natural question that arises, is whether similar conclusion holds for $(\omega_1,\omega_1)$-saturated graphs.

\begin{thm}
	The prime $(\omega_1,\omega_1)$-saturated graph exists only if $\mathfrak{c}=\omega_1$.
\end{thm}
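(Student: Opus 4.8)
The plan is to show that the mere existence of a prime $(\omega_1,\omega_1)$-saturated graph $P$ forces $\mathfrak{c}=\omega_1$, by extracting from $P$ two well-orderings of its vertex set whose interaction is incompatible with $|P|\ge\omega_2$.

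First I would record two facts about a hypothetical prime $P$. Since $(\omega_1,\omega_1)$-saturation already lets one realise, over a fixed countably infinite set $S\subseteq P$, every one of the $2^{\aleph_0}$ types ``adjacent to exactly $T$'' for $T\subseteq S$ (take $A=T$, $B=S\setminus T$), the graph $P$ has at least $\mathfrak{c}$ vertices. Next, $P$ embeds into the graph $G$ of the first type, which is $(\omega_1,\mathfrak{c})$-saturated, hence $(\omega_1,\omega_1)$-saturated, hence in the class. Because $\col(G)=\omega_1$ and the colouring number is monotone under induced subgraphs, restricting a witnessing well-ordering of $G$ to the image of $P$ yields a well-ordering $\prec_0$ of $V(P)$ in which every vertex has only countably many earlier \emph{neighbours}. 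The key extra idea is to apply the same reasoning to the complement: the complement $\overline{H}$ of any $(\omega_1,\omega_1)$-saturated graph is again $(\omega_1,\omega_1)$-saturated, since the defining condition is symmetric under exchanging $A$ and $B$. Thus $P\hookrightarrow\overline{G}$ by primeness, and complementing the embedding gives $\overline{P}\hookrightarrow G$; exactly as before this produces a second well-ordering $\prec_1$ of $V(P)$ in which every vertex has only countably many earlier \emph{non}-neighbours.

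The heart of the argument is then purely combinatorial. Fixing $x\in V(P)$ and splitting $\{y : y\prec_0 x \text{ and } y\prec_1 x\}$ by adjacency to $x$, the adjacent $y$ lie among the countably many $\prec_0$-earlier neighbours of $x$, while the non-adjacent $y$ lie among the countably many $\prec_1$-earlier non-neighbours of $x$; hence this set is countable for every $x$. Now suppose toward a contradiction that $\mathfrak{c}\ge\omega_2$, and restrict $\prec_0,\prec_1$ to any $X\subseteq V(P)$ with $|X|=\omega_2$, where the same countability persists. Colour a pair $\{x,y\}\in[X]^2$ by $0$ if $\prec_0$ and $\prec_1$ order it in the same way and by $1$ otherwise. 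By the Dushnik--Miller relation $\omega_2\to(\omega_2,\omega)^2$ there is either a $0$-homogeneous set of size $\omega_2$ or an infinite $1$-homogeneous set. In the first case $\prec_0$ and $\prec_1$ agree on a set of size $\omega_2$, so any vertex past the first $\omega_1$ of it has uncountably many common $\prec_0$- and $\prec_1$-predecessors, which is impossible. In the second case $\prec_1$ reverses $\prec_0$ on an infinite set, producing an infinite $\prec_1$-descending sequence, again impossible. This contradiction shows $\mathfrak{c}<\omega_2$, that is, $\mathfrak{c}=\omega_1$.

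The main obstacle, and the one genuinely new step, is the idea in the second paragraph: one must invoke primeness \emph{twice}, once for $P$ and once for its complement, so as to obtain two well-orderings of the \emph{same} vertex set with countable ``common descent''; setting this up (and checking that the complement of a saturated graph is saturated) is where the real work lies. Once it is in place the contradiction is delivered by a standard partition relation, and the reverse implication is routine: under CH any $(\omega_1,\omega_1)$-saturated graph has size $\ge\mathfrak{c}=\omega_1$, and a length-$\omega_1$ recursion realising countable types embeds the unique such graph of size $\omega_1$ into every member of the class, so a prime graph exists.
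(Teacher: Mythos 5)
Your proof is correct, and its setup coincides with the paper's: you invoke primeness twice, embedding the hypothetical prime graph $P$ into the prime $(\omega_1,\mathfrak{c})$-saturated graph $G$ and (after noting that $(\omega_1,\omega_1)$-saturation is preserved by complementation) into $\overline{G}$, thereby equipping $V(P)$ with one well-ordering in which every vertex has countably many earlier neighbours and another in which it has countably many earlier non-neighbours; in the paper's language, any subgraph $S$ of size $\omega_2$ would satisfy $\col(S)\le\omega_1$ and $\col(\overline{S})\le\omega_1$. (Your preliminary observation that any $(\omega_1,\omega_1)$-saturated graph has at least $\mathfrak{c}$ vertices is needed here and is correct; the paper uses it implicitly when it reduces to subgraphs of cardinality $\omega_2$.) Where you genuinely diverge is the final combinatorial step. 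The paper argues directly: writing the two witnessing well-orders as enumerations $\{s_\alpha\}_{\alpha<\omega_2}=\{t_\alpha\}_{\alpha<\omega_2}$ and letting $\sigma$ be the induced bijection of $\omega_2$, it picks a single vertex $s_\gamma$ with $\gamma>\omega_1$ and $\sigma(\gamma)>\sup\sigma[\omega_1]$; then the $\omega_1$-many vertices $s_\alpha$, $\alpha<\omega_1$, precede $s_\gamma$ in \emph{both} orders, and splitting them into neighbours and non-neighbours of $s_\gamma$ expresses $\omega_1$ as a union of two countable sets. You instead prove the dual statement that \emph{every} vertex has only countably many common predecessors, and then invoke the Dushnik--Miller relation $\omega_2\to(\omega_2,\omega)^2$, killing the $0$-homogeneous outcome by the countability of common descent and the $1$-homogeneous outcome by well-foundedness of $\prec_1$. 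Both finishes are valid; the paper's buys self-containedness (a pigeonhole choice of $\gamma$ in place of partition calculus), while yours isolates a clean abstract principle --- two well-orders of a set of size $\omega_2$ cannot have countable ``common descent'' --- at the cost of quoting a nontrivial, though standard ZFC, partition theorem. In fact the paper's choice-of-$\gamma$ trick proves exactly that principle in two lines (some element lies past position $\omega_1$ in one order and above the image of those $\omega_1$ positions in the other), so the Ramsey-theoretic step in your argument could be eliminated without changing anything else.
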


\begin{proof}
	Let $\bar{\cdot}$ denote the complement of a graph. Let $G$ be the prime $(\omega_1,\mathfrak{c})$-saturated graph. Of course both $G$ and $\bar{G}$ are $(\omega_1,\omega_1)$-saturated, so it is enough to prove, that no graph of cardinality $\omega_2$ appears as an induced subgraph of both of them. Notice, that if $S$ were such graph, then $\col(S)\le \omega_1$ and $\col(\bar{S})\le \omega_1$. So it is sufficient to show, that this cannot be the case.
	\\Suppose otherwise, that $S$ is a graph of cardinality $\omega_2$, with $\col(S)\le \omega_1$ and $\col(\bar{S})\le \omega_1$. Fix well-orderings $S=\{s_\alpha|\, \alpha<\omega_2\}=\{t_\alpha|\, \alpha<\omega_2\}$ witnessing the colouring number of $S$ and $\bar{S}$ respectively, and denote by $\sigma:\omega_2 \rightarrow \omega_2$ the bijection given by $\forall_{\alpha<\omega_2} s_\alpha=t_{\sigma(\alpha)}$. Finally, fix an ordinal $\gamma>\omega_1$, for which also $\sigma(\gamma)>\sigma[\omega_1]$. 
	\\Let $A=\{\alpha < \omega_1|\, \{s_\alpha,s_\gamma\}\in E(S)\}$, and $B=\{\alpha < \omega_1|\, \{s_\alpha,s_\gamma\}\notin E(S)\}$. Clearly $A$ is countable. Also $B=\{\alpha<\omega_1|\{t_{\sigma(\alpha)},t_{\sigma(\gamma)}\}\notin E(S)\}= \sigma^{-1}[\{\alpha\in \sigma[\omega_1]|\{t_\alpha,t_{\sigma(\gamma)}\}\notin E(S)\}]$, which is countable. But $A\cup B = \omega_1$, which is a contradiction.
\end{proof}

\subsection{The concrete representation}
	
	Consider the set of all hereditarily countable sets $H(\omega_1)$, i.e. the countable sets, of which every element is countable, and every element of element is countable, and so forth. This structure, and its higher analogs $H(\kappa)$, appear in the axiomatic set theory, because $(H(\kappa),\in)$ is a natural model of ZFC without the powerset axiom, for any uncountable, regular $\kappa$. For more elaborated introduction, we refer the reader to \cite{kunen}. It was noticed by Arturo Martinez-Celis, that the prime $(\omega_1,\mathfrak{c})$-saturated graph we have been studying is isomorphic to the graph where set of vertices is $H(\omega_1)$, and an edge is drawn between $x$ and $y$ if and only if $x\in y$ or $y\in x$.
	
\begin{thm}
	Let $(H(\omega_1),\in \cup \ni)$ be the graph defined above. It is prime $(\omega_1,\mathfrak{c})$-saturated.
\end{thm}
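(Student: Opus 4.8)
The plan is to show that the graph $(H(\omega_1), \in \cup \ni)$ satisfies the characterization from Theorem 18, namely that it is of the first type. By Theorem 15 (or the equivalences in Theorem 18), being of the first type is equivalent to being prime $(\omega_1,\mathfrak{c})$-saturated, so it suffices to produce a bijective enumeration $H(\omega_1) = \{g_\alpha \mid \alpha < \mathfrak{c}\}$ witnessing conditions (1) and (2) of Definition 5. The natural candidate for the well-ordering is one built to respect the membership relation: I would enumerate $H(\omega_1)$ along its rank or along a suitable well-ordering refining $\in$, so that every set appears \emph{after} all of its elements. First I note that $|H(\omega_1)| = \mathfrak{c}$ (each hereditarily countable set is coded by a countable structure, and there are exactly $\mathfrak{c}$ of those), so the enumeration has the right length.

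For condition (1), I would argue that along a well-ordering $\preccurlyeq$ refining the rank function, the set of earlier neighbours of a vertex $x$ is controlled. A neighbour $y$ of $x$ satisfies $y \in x$ or $x \in y$. The elements $y \in x$ form a countable set since $x$ is (hereditarily) countable, and these all precede $x$ in the rank order. The sets $y$ with $x \in y$ need \emph{not} all precede $x$, but the key point is that I only need to bound the earlier neighbours: I would arrange the enumeration so that the predecessors of $x$ that contain $x$ as an element are also kept countable. The cleanest route is to choose $\preccurlyeq$ so that $y \prec x$ implies $\operatorname{rank}(y) \le \operatorname{rank}(x)$ together with a tie-breaking well-order on each rank level; then the earlier neighbours of $x$ are contained in $\{y : y \in x\}$ (countable) together with members $y \prec x$ of the same rank with $x \in y$, and I would show this latter collection is also countable, giving $\col(H(\omega_1)) = \omega_1$, i.e. condition (1).

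For condition (2) — that every countable $A \subseteq H(\omega_1)$ equals $N(g_\alpha) \cap \{g_\beta \mid \beta < \alpha\}$ for $\mathfrak{c}$-many $\alpha$ — I would exploit the richness of $H(\omega_1)$: given a countable set $A$ of hereditarily countable sets, the set $A$ \emph{itself} is a hereditarily countable set (a countable set of countable sets is hereditarily countable), and its only neighbours among earlier vertices are exactly its elements, which is precisely $A$. To get $\mathfrak{c}$-many witnesses rather than one, I would note that for any fixed $A$ there are $\mathfrak{c}$-many distinct hereditarily countable sets $g$ whose set of members equal to (or whose down-set of neighbours restricted to the relevant initial segment equals) $A$: for instance one can pad $A$ with a unique ``tag'' set of arbitrarily high rank that is not a neighbour of anything in the initial segment, producing continuum-many distinct $g_\alpha$ all realizing the same earlier-neighbour set $A$.

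\textbf{The main obstacle} will be condition (1): the membership relation is symmetrized by $\in \cup \ni$, so while ``downward'' neighbours ($y \in x$) are automatically countable and rank-below $x$, the ``upward'' neighbours ($x \in y$) are not obviously controlled, and one must verify that only countably many of them can be \emph{earlier} in the chosen enumeration. This is exactly where the choice of well-ordering matters, and I would resolve it by ordering primarily by rank (so that any $y$ with $x \in y$ has $\operatorname{rank}(y) > \operatorname{rank}(x)$ and hence comes strictly later), reducing the earlier-neighbour set of $x$ to a subset of the genuinely countable set $\{y : y \in x\}$. Verifying that such a rank-respecting enumeration exists and has order type exactly $\mathfrak{c}$ is the crux; once it is in place, conditions (1) and (2) follow, and Theorem 18 delivers the conclusion.
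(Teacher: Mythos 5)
There is a genuine gap, and it sits exactly where you locate the crux --- except the fatal condition is (2), not (1). Your overall strategy (exhibit a first-type enumeration of $H(\omega_1)$ and invoke Theorems 16/18) is legitimate in principle, but the enumeration you propose cannot satisfy Definition 5(2). In any enumeration in which every set appears after all of its elements --- in particular any rank-respecting one --- the earlier neighbours of a vertex $g$ are exactly its elements: every $y$ with $g\in y$ has $\operatorname{rank}(y)>\operatorname{rank}(g)$ and therefore appears strictly later. Hence the earlier-neighbour set of $g$ is the set $g$ itself, and by extensionality a countable $A\subseteq H(\omega_1)$ is realized as an earlier-neighbour set by exactly \emph{one} vertex, namely $g=A$ (which, as you note, is itself hereditarily countable), whereas Definition 5(2) demands $\mathfrak{c}$-many realizations. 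Your tag repair is self-defeating: if $g=A\cup\{t\}$, then $t\in g$ and $\operatorname{rank}(t)<\operatorname{rank}(g)$, so $t$ is itself an earlier neighbour of $g$, and $g$ realizes $A\cup\{t\}$, never $A$. The only way to hide the tag is to enumerate it \emph{after} $g$, which destroys precisely the rank-respecting property on which your verification of condition (1) rests; repairing both conditions simultaneously amounts to redoing the whole bookkeeping construction of Theorem 15 inside $H(\omega_1)$ (interleaving padded sets, their not-yet-enumerated tags, and filler vertices, while re-checking that all earlier-neighbour sets stay countable), none of which is supplied in your proposal.

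The paper's proof avoids this issue entirely and is genuinely different. Saturation is checked directly by counting: given countable $A$ and $B$ with $|B|<\mathfrak{c}$, there are $\mathfrak{c}$-many hereditarily countable sets $A'\supseteq A$ disjoint from $B$, and since $|\bigcup B|<\mathfrak{c}$ one of them also satisfies $A'\notin\bigcup B$; such an $A'$ is the required vertex. Primeness is then proved not by producing a first-type well-ordering but by $\in$-recursion: for an arbitrary strictly saturated directed graph $\overleftarrow{G}$, once $\phi(y)$ is defined for all $y\in x$, choose $\phi(x)=g$ with $N_z(g)=\phi[x]$; this embeds $(H(\omega_1),\in)$ into $\overleftarrow{G}$, in particular into a directed version of the prime $(\omega_1,\mathfrak{c})$-saturated graph, which in turn embeds into every $(\omega_1,\mathfrak{c})$-saturated graph. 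In other words, the membership relation itself serves as the recursion scheme, playing the role that your missing well-ordering was supposed to play.
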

\begin{proof}
	First, let us check, that it is $(\omega_1,\mathfrak{c})$-saturated. Let $A,B \subseteq H(\omega_1)$, $|A|\le \omega$, $|B|<\mathfrak{c}$. We aim to find a countable set $A'\supseteq A$, disjoint with $B$, and not belonging to $\bigcup{B}$.	But $|\bigcup{B}|<\mathfrak{c}$, and there are $\mathfrak{c}$-many different hereditarily countable sets having $A$ as a subset, and disjoint with $B$. \\
	We now prove, that it is prime. It is sufficient to prove that for any strictly saturated graph $G$, we have an embedding $\phi:(H(\omega_1),\in) \hookrightarrow G$. We define $\phi$ by the induction, with respect to the $\in$ relation. Suppose that $x \in H(\omega_1)$, and for every $y\in x$, $\phi(y)$ is defined. Set $\phi(x)=g$, where $N_z(g)=\phi[x]$. It is routine to check, that this is an embedding.
\end{proof}

\begin{cor}
	$|H(\omega_1)|=\mathfrak{c}.$
\end{cor}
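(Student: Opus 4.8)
The plan is to read the cardinality straight off the structural result just established, rather than compute it by hand. By the preceding theorem, the graph $(H(\omega_1),\in\cup\ni)$ is prime $(\omega_1,\mathfrak{c})$-saturated. The characterization theorem (Theorem 16) then tells us that every prime $(\omega_1,\mathfrak{c})$-saturated graph is of the first type, and by the definition of that class a graph of the first type comes equipped with a \emph{bijective} enumeration $\{g_\alpha\mid \alpha<\mathfrak{c}\}$. Hence any prime $(\omega_1,\mathfrak{c})$-saturated graph has exactly $\mathfrak{c}$ vertices, and in particular $|H(\omega_1)|=\mathfrak{c}$. This is the intended one-line proof: all the work has already been done, and the corollary is a formal consequence of the preceding theorem together with Theorem 16.

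To guard against the suspicion that this reasoning is circular, I would also record the self-contained cardinal computation that confirms it. The lower bound $|H(\omega_1)|\ge\mathfrak{c}$ is immediate, since every subset of $\omega$ is already a hereditarily countable set, so $\mathcal{P}(\omega)\subseteq H(\omega_1)$. For the reverse inequality one stratifies by rank, writing $H(\omega_1)=\bigcup_{\alpha<\omega_1}\bigl(H(\omega_1)\cap V_\alpha\bigr)$ as a union over the countable ordinal ranks. An induction shows each level has size at most $\mathfrak{c}$: a set appearing at level $\alpha+1$ is a countable subset of the level-$\alpha$ piece, and there are at most $\mathfrak{c}^{\omega}=\mathfrak{c}$ such subsets. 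Since there are only $\omega_1\le\mathfrak{c}$ levels, the union has size at most $\mathfrak{c}\cdot\omega_1=\mathfrak{c}$. This is the familiar identity $|H(\omega_1)|=2^{<\omega_1}=\mathfrak{c}$.

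There is essentially no obstacle here. The only point that deserves a moment's care is that membership in the class of graphs of the first type pins the number of vertices down to $\mathfrak{c}$ \emph{exactly}, not merely up to a bound, because the witnessing enumeration is required to be a bijection onto $\mathfrak{c}$; this is precisely what lets us conclude an equality rather than an inequality. The independent arithmetic of the second paragraph serves as a consistency check on this, and either route delivers the corollary.
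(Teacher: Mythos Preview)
Your proposal is correct and matches the paper's intended argument: the paper gives no explicit proof of the corollary, leaving it as an immediate consequence of Theorem~18 together with Theorem~16 (prime $(\omega_1,\mathfrak{c})$-saturated $\Rightarrow$ first type $\Rightarrow$ bijective enumeration by $\mathfrak{c}$), which is exactly the deduction you spell out. Your supplementary rank-by-rank cardinal computation is a correct independent verification, not present in the paper, and your remark that the enumeration being \emph{bijective} is what yields an equality rather than an inequality is on point.
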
	

 Note, that it is not even \emph{a priori} clear, that $H(\omega_1)$ is a set, not a proper class.
	
\section{Boolean algebras with the strong countable separation property}

In this section letters $A,B,C$ always denote Boolean algebras.

\begin{defin}
	An infinite Boolean algebra $A$ has the strong countable separation property if the following assertion holds:
	
	Suppose $F,G \subseteq A$ are countable sets with the property that for all nonempty, finite subsets $f \subseteq F$, $g\subseteq G$, $\bigvee{f}<\bigwedge{g}$. Then there exist $a\in A$, such that for all $x\in F$, and $y\in G$, $x<a<y$.
\end{defin}

This is equivalent to being injective with respect to countable subalgebras.

\begin{prop}[5.29 in \cite{bool}] Assume that $A$ has the strong countable separation property, $f:B\rightarrow A$ is an embedding, and $B\subseteq C$ is countable. Then $f$ extends to an embedding $\tilde{f}:C\rightarrow A$.
\end{prop}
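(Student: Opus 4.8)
The plan is to prove the two directions of the equivalence implicit in Proposition~10, namely that the strong countable separation property is exactly the statement that $A$ is injective with respect to countable subalgebras. The harder and more substantive direction is the one stated: assuming $A$ has the strong countable separation property, an embedding $f:B\hookrightarrow A$ of a subalgebra $B\subseteq C$ (with $C$ countable) extends to an embedding $\tilde f:C\hookrightarrow A$. As with Lemma~1 for linear orders, the key reduction is that it suffices to handle the case where $C$ is a \emph{one-point extension} $C=\langle B,c\rangle$ generated over $B$ by a single new element $c$, since a countable algebra $C$ can be exhausted by a chain $B=B_0\subseteq B_1\subseteq\dots$ of one-point extensions whose union is $C$, and the extensions can be built by induction, taking unions at the (here only finite, but in general countable) limit stages.

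First I would set up the one-generator reduction: enumerate $C\setminus B=\{c_0,c_1,\dots\}$ and define $B_{n+1}=\langle B_n,c_n\rangle$, so that $\bigcup_n B_n=C$; a directed union of embeddings into $A$ is again an embedding, so it is enough to extend across each single step. For the single step, the task is to choose an appropriate target $a\in A$ for the new generator $c$ so that the assignment extends the given embedding to all Boolean combinations. The standard fact here is that to extend a Boolean embedding $f:B\hookrightarrow A$ to $\langle B,c\rangle$ one needs $a\in A$ satisfying, for every $b\in B$, the implications $b\le c\Rightarrow f(b)\le a$ and $c\le b\Rightarrow a\le f(b)$; equivalently $a$ must sit above $f[\{b\in B: b\le c\}]$ and below $f[\{b\in B: c\le b\}]$, and this condition both suffices for $f$ to extend to a homomorphism and, because the lower and upper sets genuinely separate (their relevant join/meet relations are strict since $c\notin B$), guarantees the extension is again injective.

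The main obstacle, and the place where the hypothesis does its work, is producing such an $a$. The two relevant sets $F'=f[\{b\in B:\,b\le c\}]$ and $G'=f[\{b\in B:\,c\le b\}]$ are countable, since $B$ is countable, and for any finite $f_0\subseteq F'$ and $g_0\subseteq G'$ we have $\bigvee f_0<\bigwedge g_0$ in $A$: indeed if $b_1,\dots,b_k\le c$ and $c\le d_1,\dots,d_m$, then $b_1\vee\dots\vee b_k\le c\le d_1\wedge\dots\wedge d_m$, and the inequality is strict because $c\notin B$ so $c$ differs from every element of $B$, in particular from the Boolean interval endpoints. Applying the strong countable separation property to the pair $(F',G')$ yields precisely an element $a\in A$ with $\bigvee f_0<a<\bigwedge g_0$-type separation, i.e.\ $x<a<y$ for all $x\in F'$, $y\in G'$, which is exactly the interpolant needed. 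Setting $\tilde f(c)=a$ and extending multiplicatively over the finitely generated step then completes the induction; the verification that the resulting map is a well-defined injective homomorphism is the routine Boolean-algebra bookkeeping I would leave to the reader. For the converse direction (injectivity implies the separation property), one embeds the countable data $F\cup G$ into a countable Boolean algebra in which the required interpolant exists and invokes injectivity to pull it back, but since only the stated direction is needed here I would record that briefly and focus the write-up on the extension argument above.
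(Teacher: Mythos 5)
The paper itself never proves this proposition (it is quoted verbatim from Koppelberg's Handbook), so the natural benchmark is the analogous argument the paper runs for Theorem 20; your overall strategy --- exhaust $C$ over $B$ by one-generator extensions and use the strong countable separation property to find a target for each new generator --- is that same strategy, and your verification of the hypothesis of the separation property (strictness of $\bigvee f_0<\bigwedge g_0$, which indeed follows from $c\notin B$) is correct. The genuine gap is your claim that strict separation of $F'=f[\{b\in B:\, b\le c\}]$ from $G'=f[\{b\in B:\, c\le b\}]$ makes the extension \emph{injective}. Sikorski's criterion with these two sets only constrains $a$ against the elements of $B$ \emph{comparable} with $c$, whereas injectivity of $\tilde f$ with $\tilde f(c)=a$ is equivalent to the two \emph{converse} implications: $f(b)\le a\Rightarrow b\le c$ and $a\le f(b)\Rightarrow c\le b$ for all $b\in B$; equivalently, $a$ must split $f(b)$ into two nonzero pieces whenever $c$ splits $b$ into two nonzero pieces. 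Strictness of the separation gives none of this. Concretely, take $B=\{0,u,-u,1\}$ and $c$ independent of $u$, so that $u\wedge c$, $u\wedge -c$, $-u\wedge c$, $-u\wedge -c$ are all nonzero in $C$. Then $F'=\{0\}$ and $G'=\{1\}$, so the separation property demands only $0<a<1$, and nothing prevents it from handing you $a=f(u)$; the induced homomorphism then sends the nonzero element $u\wedge -c$ to $f(u)\wedge -f(u)=0$, hence is not an embedding. So your induction can already fail at its very first step, and the failure cannot be excluded by any choice the separation property lets you make.

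What each step actually requires is realizing the full quantifier-free type of $c$ over $B$: besides $f(b)<a$ for $b\le c$ and $a<f(b)$ for $c\le b$, one needs $f(b)\wedge a\neq 0$ whenever $b\wedge c\neq 0$, and $f(b)\wedge -a\neq 0$ whenever $b\wedge -c\neq 0$. These are not order constraints between $a$ and the set $f[B]$, so they cannot be fed into the separation property as stated; the standard remedy is a preliminary recursion that applies the separation property countably many times to choose, for each $b\in B$ that $c$ splits, nonzero witnesses $w_b, v_b\le f(b)$ (destined to lie below $a$ and below $-a$ respectively, kept disjoint from each other and compatible with the ideal $\{b':\,b'\wedge c=0\}$ and the filter $\{b':\,c\le b'\}$), and only then to apply the separation property once more to the enlarged countable families $f[\{b:\,b\le c\}]\cup\{w_b\}$ and $\{f(b):\,c\le b\}\cup\{-v_b\}$. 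That recursive bookkeeping is the real mathematical content of Koppelberg's Proposition 5.29; it is precisely the part your write-up waves off as routine, and without it the proof does not go through.
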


For exposition of general theory of Boolean algebras with the strong countable separation property, we refer the reader to \cite{bool}. In case of Boolean algebras, we start from the classical characterization of
$\mathcal{P}(\omega) \slash \text{Fin}$ by Parovi\v{c}enko.

\begin{thm}[Parovi\v{c}enko] Under CH, $\mathcal{P}(\omega) \slash \text{Fin}$ is the unique Boolean algebra of cardinality $\mathfrak{c}$, with the strong countable separation property.
\end{thm}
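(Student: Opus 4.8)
The plan is to prove the statement in two parts: first that $\mathcal{P}(\omega)/\mathrm{Fin}$ is a genuine witness, i.e.\ that it has cardinality $\mathfrak{c}$ and the strong countable separation property; and second that, under CH, any two algebras satisfying the hypotheses are isomorphic, by a back-and-forth argument powered by Proposition 10.

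For the first part, $|\mathcal{P}(\omega)/\mathrm{Fin}| = 2^\omega = \mathfrak{c}$, since collapsing the countable ideal of finite sets does not change the cardinality of this uncountable quotient. For the separation property, take countable $F, G \subseteq \mathcal{P}(\omega)/\mathrm{Fin}$ with $\bigvee f < \bigwedge g$ for all nonempty finite $f \subseteq F$, $g \subseteq G$. I lift to representatives and, replacing the $n$-th element by the union (respectively intersection) of the first $n$, arrange sequences $X_0 \subseteq^* X_1 \subseteq^* \cdots$ and $\cdots \subseteq^* Y_1 \subseteq^* Y_0$ with $X_n \subseteq^* Y_m$ for all $n,m$. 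A standard diagonalization modulo finite then produces a single set $Z$ with $X_n \subseteq^* Z \subseteq^* Y_n$ for every $n$, whose class separates $F$ from $G$.

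For the second part, under CH we have $\mathfrak{c} = \omega_1$, so any two algebras $A$, $A'$ as in the statement have cardinality $\omega_1$; fix enumerations $A = \{a_\alpha : \alpha < \omega_1\}$ and $A' = \{a'_\alpha : \alpha < \omega_1\}$. I construct, by transfinite recursion, an increasing chain of countable subalgebras $B_\alpha \subseteq A$, $B'_\alpha \subseteq A'$ together with isomorphisms $\phi_\alpha : B_\alpha \to B'_\alpha$ satisfying $\phi_\beta \subseteq \phi_\alpha$ for $\beta < \alpha$. At limit stages I take unions, an increasing union of subalgebras being a subalgebra and a directed union of isomorphisms being an isomorphism. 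At a successor stage I alternate. To bring $a_\alpha$ into the domain, I let $C$ be the countable subalgebra generated by $B_\alpha \cup \{a_\alpha\}$ and apply Proposition 10 to the embedding $\phi_\alpha : B_\alpha \to A'$, using that $A'$ has the strong countable separation property, to extend it to an embedding $C \to A'$. Symmetrically, to bring $a'_\alpha$ into the range I apply Proposition 10 to $\phi_\alpha^{-1} : B'_\alpha \to A$, using the property of $A$. After $\omega_1$ steps every $a_\alpha$ lies in some $B_\beta$ and every $a'_\alpha$ in some $B'_\beta$, so $\phi = \bigcup_{\alpha < \omega_1} \phi_\alpha$ is an isomorphism $A \to A'$; in particular $A \cong \mathcal{P}(\omega)/\mathrm{Fin}$.

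The back-and-forth bookkeeping is routine once Proposition 10 is available, so the genuine content sits in two places. The heavier of these is Proposition 10 itself, which is quoted here; its proof is where the separation property is really used, since adjoining one new element to a countable subalgebra amounts to realizing a single cut of that subalgebra, determined by a countable ideal--filter pair, which is exactly the data the strong countable separation property interpolates. Within what remains to be shown, the main obstacle is the explicit diagonalization establishing that $\mathcal{P}(\omega)/\mathrm{Fin}$ has the property: the separator $Z$ must simultaneously meet the countably many constraints $X_n \subseteq^* Z$ and $Z \subseteq^* Y_n$, which is arranged by a standard interleaving along a fast-growing sequence of thresholds.
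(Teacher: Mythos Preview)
The paper does not actually prove Parovi\v{c}enko's theorem: it is quoted as a classical result, with a reference to \cite{van} for the converse direction, and the exposition immediately moves on to the Avil\'es--Brech strengthening. So there is no proof in the paper to compare your proposal against.

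That said, your argument is the standard one and is correct in outline. The back-and-forth half is exactly what Proposition~10 is designed for, and under CH the enumeration in type $\omega_1$ makes the bookkeeping go through as you describe. One small point you gloss over in the first half: the strong countable separation property asks for \emph{strict} inequalities $x<a<y$, whereas the usual diagonal interpolation produces $Z$ with $X_n\subseteq^* Z\subseteq^* Y_n$, which a~priori could be an equality at some stage. This is easily repaired---$\mathcal{P}(\omega)/\mathrm{Fin}$ is atomless, so once a weak interpolant $Z$ is in hand one can perturb it within the infinite sets $Y_n\setminus X_n$ to force strictness on both sides---but it is worth saying explicitly, since the adjective ``strong'' in the definition is precisely about this distinction.
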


In fact, this statement is equivalent to CH, as was proved by van Douwen and van Mill \cite{van}. In \cite{ab} Avil\'es and Brech consider Boolean algebras which realize extensions by so-called posex' (push-out separable extensions).

\begin{defin}[Avil\'es-Brech, \cite{ab}]
	An embedding $f:A\rightarrow B$ is a posex, if the following holds:
	\begin{itemize}
		\item for all $b\in B\setminus f[A]$, the ideal $\{a\in A|\, f(a) <b\}$ is countably generated,
		\item $B$ is countably generated over $f[A]$ (i.e. there exist a countable set $S \subseteq B$, such that $B$ is generated by $f[A]\cup S$).
	\end{itemize}
\end{defin}

\begin{defin}[Avil\'es-Brech, \cite{ab}]
	A Boolean algebra $A$ is tightly $\sigma$-filtered if there exists a sequence of Boolean algebras $(A_\eta)_{\eta\le \zeta}$, such that
	\begin{itemize}
		\item $A_0=\{0,1\}$,
		\item $A_\eta \subseteq A_{\eta+1}$ is a posex, for $\eta<\zeta$,
		\item $A_\eta =\bigcup_{\alpha<\eta}{A_\alpha}$, for limit $\eta<\zeta$,
		\item $A_\zeta=A$.		
	\end{itemize}
	We call the sequence $(A_\eta)_{\eta<\zeta}$ a filtration of $A$.
\end{defin}

\begin{thm}[Avil\'es-Brech, \cite{ab}]
	Assume, that $\mathfrak{c}$ is regular. Then there exists a unique Boolean algebra $A$, such that:
	\begin{itemize}
		\item $|A|=\mathfrak{c}$,
		\item $A$ is tightly $\sigma$-filtered,
		\item For any embedding $f:B\rightarrow A$, where $|B|<\mathfrak{c}$, and any posex $C\supseteq B$,
		there exists an embedding $\tilde{f}:C\rightarrow A$, extending $f$.
	\end{itemize}
	
\begin{tikzcd}
B \arrow{r}{f} \arrow{d}[swap]{i} & A\\
C \arrow[dashed]{ur}[swap]{\tilde{f}}
\end{tikzcd}

\end{thm}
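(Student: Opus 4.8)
The plan is to recognise this statement as the existence-and-uniqueness theorem for the Fra\"iss\'e limit, in the sense of \cite{fraisse}, of the class of Boolean algebras with \emph{posex} morphisms, and to run the usual two-part argument: build the limit by a bookkeeping recursion of length $\mathfrak{c}$, and prove uniqueness by a back-and-forth carried out entirely inside that class. Regularity of $\mathfrak{c}$ enters in precisely the two places already familiar from the linear-order and graph arguments above: a subalgebra of size $<\mathfrak{c}$ of an increasing union of length $\mathfrak{c}$ already sits inside a single stage, and an increasing union of $<\mathfrak{c}$ stages, each of size $<\mathfrak{c}$, again has size $<\mathfrak{c}$. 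I would also record at the outset the easy cardinality facts: taking $B$ countable, \emph{every} countable extension $B\subseteq C$ is automatically a posex (it is countably generated, and every ideal of the countable algebra $B$ is countably generated), so the third property of the theorem specialises to injectivity with respect to countable subalgebras, i.e. to the strong countable separation property; by \cite{bool} this already forces $|A|\ge\mathfrak{c}$, so together with $|A|\le\mathfrak c$ we get $|A|=\mathfrak{c}$ for free.

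The technical heart, and the step I expect to be the main obstacle, is an amalgamation lemma for posex, which is the combinatorial core of \cite{ab}: if $f\colon B\to A$ is an embedding and $g\colon B\to C$ is a posex, then in the pushout $A\to A\ast_B C\leftarrow C$ the map $A\to A\ast_B C$ is again a posex; moreover a composition of two posex is a posex. Here the easy half is that $A\ast_B C$ is countably generated over the image of $A$, since the countably many generators of $C$ over $B$ push forward. The delicate half is that for each new element $b\in A\ast_B C$ the lower ideal $\{a\in A\mid a<b\}$ is countably generated in $A$; this is exactly where one uses that $\{x\in B\mid g(x)<\beta\}$ is countably generated for the corresponding elements $\beta$ of $C$, transported through the amalgam. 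Getting this ideal condition to survive the pushout is the crux of the whole theorem, and I would isolate it as a separate lemma before touching either existence or uniqueness.

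For \emph{existence} I would construct a chain $(A_\eta)_{\eta\le\mathfrak{c}}$ with $A_0=\{0,1\}$, with $A_\eta\subseteq A_{\eta+1}$ a posex at every successor, and with unions at limits; $A=A_{\mathfrak c}$ is then tightly $\sigma$-filtered by fiat. At each successor stage a bookkeeping function hands me a pair consisting of an earlier stage $A_\zeta$ ($\zeta\le\eta$) and a posex $A_\zeta\to D$, and I set $A_{\eta+1}$ to realise $D$ over $A_\zeta$ (using the pushout lemma to keep $A_\zeta\to A_{\eta+1}$, and hence each step, a posex). Since a posex adds only countably many generators, cardinalities never jump: $|A_{\eta+1}|=|A_\eta|<\mathfrak c$ at successors, and regularity keeps limits below $\mathfrak c$. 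The bookkeeping is schedulable because each $A_\zeta$ of size $<\mathfrak c$ admits at most $|A_\zeta|^{\omega}\le\mathfrak{c}^{\omega}=\mathfrak{c}$ posex extensions up to isomorphism, and $\mathfrak c\cdot\mathfrak c=\mathfrak c$ tasks fit into a recursion of length $\mathfrak c$, each appearing cofinally. Finally I would verify the extension property of $A$: given $f\colon B\to A$ with $|B|<\mathfrak c$ and a posex $B\subseteq C$, regularity puts $f[B]\subseteq A_\zeta$ for some $\zeta$; the pushout $A_\zeta\ast_B C$ is a posex over $A_\zeta$ by the lemma, so it was scheduled and realised at some later stage, and composing $C\to A_\zeta\ast_B C\to A$ gives the required $\tilde f$.

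For \emph{uniqueness} I would take two algebras $A,A'$ with the three properties and argue that their defining filtrations are \emph{Fra\"iss\'e sequences} in the category of posex: cofinal among the small algebras (by the cardinality/regularity remarks) and absorbing (this is literally the extension property, read as one-sided amalgamation). The general uniqueness theorem for Fra\"iss\'e sequences in \cite{fraisse} then yields an isomorphism $A\cong A'$. Concretely this is a back-and-forth: one builds a cofinal zig-zag of posex between the two filtrations, at each step forming a pushout of the next posex-step of one side across the partial matching and absorbing it into the other side via that side's extension property. The decisive point — and the reason I would work inside the category of posex rather than with subalgebras generated inside the ambient algebras — is that this back-and-forth only ever forms amalgams of posex, so the amalgamation lemma guarantees every intermediate step stays a posex; this is exactly the pitfall that makes a naive "swallow the next level" approach fail, since a one-generator subalgebra generated inside $A$ need not be a posex over the current domain. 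Thus both the existence bookkeeping and the uniqueness back-and-forth rest on the single pushout lemma, which is where essentially all the difficulty of the theorem is concentrated.
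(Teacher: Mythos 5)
The paper does not actually prove this statement: it is imported verbatim, with attribution, from Avil\'es and Brech \cite{ab}, and the paper only proves things \emph{about} the resulting algebra (e.g.\ Theorem 20). So there is no in-paper proof to compare yours against; the correct benchmark is the push-out iteration argument of \cite{ab}, equivalently its Fra\"iss\'e-theoretic formulation in \cite{fraisse}, and your reconstruction follows exactly that route: existence by a length-$\mathfrak{c}$ bookkeeping iteration realizing posexes via push-outs, uniqueness by back-and-forth, both resting on the lemma that a posex pushed out along an arbitrary embedding is again a posex. You have correctly located the technical heart there, and your cardinality bookkeeping (including $\mathfrak{c}^\omega=\mathfrak{c}$, the count of posexes over a stage, and the two uses of regularity of $\mathfrak{c}$) is sound.

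One point needs more care than your sketch gives it. The arrows of the category in which the uniqueness back-and-forth runs cannot be single posexes: a posex is by definition countably generated over its domain, so although a composition of two, or even countably many, posexes is again a posex --- your claim here is correct, since for $c$ in the top algebra the trace ideal $\{a\in A\mid a\le c\}$ is an increasing union of countably many countably generated ideals --- a transfinite composition of length $\omega_1$ or more is \emph{not} a posex, and the zig-zag between two filtrations of length $\mathfrak{c}$ unavoidably produces such compositions. The same issue appears in your ``cofinal'' and ``absorbing'' claims: the embedding $C\to A_{\eta'}$ produced by the extension property must itself be an arrow of the category, which fails if arrows are bare posexes. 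The repair is standard and is how \cite{ab} and \cite{fraisse} actually set things up: take as arrows the tightly $\sigma$-filtered embeddings (embeddings $A\to B$ such that $B$ carries a tight $\sigma$-filtration over the image), check that this class is closed under push-out --- which follows stepwise along the filtration from the same posex push-out lemma --- and observe that the embeddings built stage-by-stage in your existence construction are of this kind. With that adjustment, your argument is essentially the original one.
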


 We will call an algebra with these properties an Avil\'es-Brech algebra. Since any extension between countable Boolean algebras is posex, the algebra defined above clearly has the strong countable separation property. Moreover, it is prime with respect to this property.

\begin{thm} The Avil\'es-Brech algebra defined above, embeds into any Boolean algebra having the strong countable separation property.
\end{thm}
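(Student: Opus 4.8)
The plan is to build an embedding of the Avil\'es-Brech algebra $A$ into an arbitrary Boolean algebra $D$ with the strong countable separation property, by transfinite recursion along a filtration of $A$. Since $A$ is tightly $\sigma$-filtered, fix a filtration $(A_\eta)_{\eta\le\zeta}$ with $A_0=\{0,1\}$, each $A_\eta\subseteq A_{\eta+1}$ a posex, and unions taken at limits. I will construct an $\subseteq$-increasing chain of embeddings $f_\eta\colon A_\eta\hookrightarrow D$ whose union is the desired embedding $f_\zeta\colon A=A_\zeta\hookrightarrow D$. The base step sends $\{0,1\}$ to $\{0,1\}$ in $D$, and at limit stages I simply take $f_\eta=\bigcup_{\alpha<\eta}f_\alpha$, which is an embedding because each element of $A_\eta$ already lies in some $A_\alpha$.

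The crux is the successor step: given an embedding $f_\eta\colon A_\eta\hookrightarrow D$, I must extend it across the posex $A_\eta\subseteq A_{\eta+1}$. The key observation is that a posex extension is built up from the base algebra by adjoining countably many generators, and for each new element $b$ the relevant ideal $\{a\in A_\eta\mid a<b\}$ is countably generated. This means the entire extension $A_{\eta+1}$ is determined over $A_\eta$ by \emph{countable} data. The idea is to realize this countable data inside $D$ using the strong countable separation property. Concretely, enumerate a countable generating set $S=\{s_n\mid n<\omega\}$ for $A_{\eta+1}$ over $f_\eta[A_\eta]$, and extend $f_\eta$ one generator at a time: to place the image of $s_n$, I consider the countably generated ideal $\{a\in A_\eta\mid a<s_n\}$ together with its dual filter $\{a\in A_\eta\mid s_n<a\}$, push these forward via the already-defined embedding, and invoke the separation property of $D$ to find an element lying strictly between the pushed-forward countable sets. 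This produces a consistent placement for the image of $s_n$, and iterating over all $n$ yields the extension to all of $A_{\eta+1}$.

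The cleanest route, however, is to invoke Proposition (5.29 in \cite{bool}) directly rather than reprove the separation argument by hand: that Proposition asserts precisely that an algebra $D$ with the strong countable separation property is injective with respect to countable subalgebra extensions, i.e. any embedding of a subalgebra extends across any \emph{countable} superalgebra. Since a posex $A_\eta\subseteq A_{\eta+1}$ is countably generated over $A_\eta$, the increment $A_{\eta+1}$ is generated by $A_\eta$ together with a countable set; passing to the countable subalgebra generated by the relevant countable data and applying the Proposition repeatedly (or in one stroke, after checking that the posex structure lets us reduce to the countable case) extends $f_\eta$ to $f_{\eta+1}$. I would phrase the successor step so that at each stage one adjoins a single new generator, reducing to a genuinely countable extension where Proposition 13 applies verbatim.

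The main obstacle I anticipate is the bookkeeping needed to reduce a posex extension, which may add countably many generators at once, to a sequence of single countable extensions to which Proposition 13 can be applied cleanly, while ensuring that the countably-generated-ideal condition of a posex is exactly what guarantees that the separating element exists in $D$ at each step. In other words, the delicate point is matching the definition of posex (countably generated ideals below each new element, countably many generators overall) against the hypothesis of the strong countable separation property (separation of countable sets with the interleaving inequalities), so that the extension is not merely order-preserving but a genuine Boolean-algebra embedding respecting all meets, joins, and complements introduced by the new generators. Once this matching is verified at the level of a single generator, the recursion runs routinely and the union $f_\zeta$ is the required embedding.
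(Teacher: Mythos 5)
Your direct argument (your second paragraph) is essentially the paper's proof: the paper fixes a filtration $(A_\alpha)_{\alpha<\mathfrak{c}}$ of the Avil\'es--Brech algebra, takes unions at limit stages, and at a successor stage adjoins the countably many generators $s_0,s_1,\ldots$ of the posex one at a time, using countable generation of the ideals $\{a \mid a< s_i\}$ and $\{a \mid a < -s_i\}$ together with the strong countable separation property to find $b\in B$ with $f(a_n)<b<-f(a'_n)$ for all $n$. The one ingredient you leave implicit is the verification that assigning $s_i\mapsto b$ really extends the \emph{embedding}: the paper gets this from Koppelberg's one-generator extension criterion (Corollary 5.8 of the Handbook, quoted as a lemma), namely that $a\le s_i\le a'$ must force $f(a)\le b\le f(a')$; this holds because the chosen countable generating sets are cofinal in the full ideal below $s_i$ and coinitial in the full filter above it. That cofinality point is precisely how the posex condition enters, and it is the ``matching'' you flag at the end as the delicate step --- so your outline is correct, but this criterion is the lemma you would need to cite or reprove.

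However, the route you call ``cleanest'' --- invoking Proposition 13 (5.29 in the Handbook) directly --- has a genuine gap and, once repaired, collapses back into the direct argument. Proposition 13 requires the pair $B\subseteq C$ to be countable, whereas the base $A_\eta$ of the posex has cardinality up to $\mathfrak{c}$, so it does not apply verbatim to $A_\eta\subseteq A_{\eta+1}$. If instead you restrict to a countable subalgebra $B_0\subseteq A_\eta$ containing the relevant ideal generators and extend $f\restriction B_0$ to the countable algebra $\langle B_0\cup S\rangle$, you then face an amalgamation problem: a map on $\langle B_0\cup S\rangle$ agreeing with $f$ on $B_0$ does not by itself yield an embedding of $A_{\eta+1}=\langle A_\eta\cup S\rangle$ extending $f$, because the images of the new generators must respect \emph{all} inequalities against elements of $A_\eta$, not just those in $B_0$. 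What makes this work is exactly that the countably many ideal generators are cofinal in the entire ideal $\{a\in A_\eta \mid a<s_i\}$, so the separating element chosen from the countable data automatically works against all of $A_\eta$ --- but establishing that is the direct separation argument plus Koppelberg 5.8, not an application of Proposition 13. So the ``one-stroke'' reduction is not a shortcut; the coherence check it sweeps under the rug is the whole content of the proof.
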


\begin{proof}
	Let $B$ be any Boolean algebra with the strong countable separation property. Let $A$ be an Avil\'es-Brech algebra, with filtration $(A_\alpha)_{\alpha<\mathfrak{c}}$. We construct an increasing sequence of embeddings $f_\beta : A_\beta \rightarrow B$, by induction on $\alpha$, using the Corollary 5.8 from \cite{bool}, quoted below.
	\begin{lem}
	Let $A$ be any Boolean algebra and $A(x)$ its extension generated
	by $A\cup \{x\}$. Let $f:A\rightarrow B$ be an embedding, and let $y \in B$. Then $f$ extends to an embedding $\tilde{f}:A(x)\rightarrow B$, with $\tilde{f}(x)=y$ if and only if for all $a,a' \in A$, $a \le x \le a'$ implies $f(a)\le y \le f(a')$.
	\end{lem}
	Suppose that $f_\beta :A_\beta \rightarrow B$ is defined for all $\beta<\alpha <\mathfrak{c}$. If $\alpha$ is limit, we set $f_\alpha=\bigcup\{f_\beta:\, \beta<\alpha\}$, so suppose that $\alpha = \beta+1$. Let $S=\{s_0, s_1, \ldots\}$ be a set such that $A_\alpha = \langle  A_\beta \cup S \rangle$. We successively extend  $f_\beta$ to each of the generators using the Lemma. More precisely, let $f_\beta^0=f_\beta$, and assume that $f_\beta^i : \langle A_\beta \cup \{s_0,\ldots,s_{i-1}\} \rangle \rightarrow B$ is defined. Let $\{a_n|\, n<\omega\}$ be a set of generators of the ideal $ \{a \in \langle A_\beta \cup \{s_0,\ldots,s_{i-1}\}\rangle |\, f_\beta^i(a) < s_i \}$, and let
	$\{a'_n |\, n<\omega \}$ be a set of generators of the ideal $ \{a \in \langle A_\beta \cup \{s_0,\ldots,s_{i-1}\}\rangle |\, f_\beta^i(a) < -s_i \}$. Because of the strong countable separation property, there exists $b \in B$ such that for all $n<\omega$
	$$f_\beta^i(a_n)<b<-f_\beta^i(a'_n).$$
	We extend $f_\beta^i$ to $f_\beta^{i+1}$, so that $f_\beta^{i+1}(s_i)=b$. Finally, let $f_\alpha = \bigcup_{i<\omega}{f_\beta^i}$.

\end{proof}

 A natural question arises.
 
 \begin{prob}
 	Let $A$ be a Boolean algebra with the strong countable separation property, and such that whenever $B$ has the strong countable separation property, then $A$ embeds into $B$. Does it follow that $A$ is the Avil\'es-Brech algebra?
 \end{prob}


\begin{thebibliography}{HD}
	
	
	\baselineskip=15pt
	
	\begin{small}
		
			\bibitem[1]{ab} A. Avil\'es, C. Brech, \textit{A Boolean algebra and a Banach space obtained by push-out iteration}, Topology and its Applications 158 (2011), 1534-1550
			
			\bibitem[2]{cohen} A. Dow, K.P. Hart, \textit{Applications of another characterization of $\beta \mathbb{N} \setminus \mathbb{N}$}, Topology and its Applications 122 (2002), 105-133
			
			\bibitem[3]{giarlotta}A. Giarlotta, \textit{The representability number of a chain}, Topology and its Applications 150 (2005), 157-177
			
			\bibitem[4]{Gillman}L. Gillman, \textit{Some remarks on $\eta_\alpha$-sets}, Fund. Math. 43 (1956), 77-82
			
			\bibitem[5]{harz} E. Harzheim, \textit{Ordered sets}, Springer, 2005, 97-108
			
					
			\bibitem[6]{Hausdorff}F. Hausdorff, \textit{Gr\"undzuge einer Theorie geordneten Mengen}, Math. Ann. 65 (1908), 435-505
			
			\bibitem[7]{hod} W. Hodges, \textit{A shorter model theory}, Cambridge University Press, 1997
			
			\bibitem[8]{bool} S. Koppelberg, edited by D. Monk with R. Bonnet, \textit{Handbook of Boolean algebras, vol. 1}, North-Hollad, 1989
			
			
			\bibitem[9]{fraisse} W. Kubi\'s, \textit{Fraïss\'e sequences: category-theoretic approach to universal homogeneous structures}, Annals of Pure and Applied Logic 165(11), 2014 1755-1811
			
			
			\bibitem[10]{kunen} K. Kunen, \textit{Set Theory, Introduction to Independence Proofs}, Elsevier Science B.V., 1980
			
			\bibitem[11]{novak}V. Nov\'ak, \textit{On the lexicographic dimension of linearly ordered sets}, Fund. Math. 56 (1964), 8-20
			
			\bibitem[12]{novotny}M. Novotn\'y, \textit{On similarity of ordered continua of types $\tau$ and $\tau^2$}, Československá Akademie Věd. Časopis Pro Pěstování Matematiky, 78 (1953), 59-60
			
			
			
			\bibitem[13]{rosen}J. G. Rosenstein, \textit{Linear Orderings}, Academic Press Inc., 1982
		
			\bibitem[14]{Urysohn}P. Urysohn, \textit{Un th\'eor\'eme sur la puissance des ensembles ordonn\'es}, Fund. Math. 5 (1923), 14-19
		
		
			\bibitem[15]{van} E.K. van Douwen, J. van Mill,, \textit{Parovi\v{c}enko's Characterization of $\beta \omega - \omega$ Implies CH}, Proc. Am. Math. Soc, Vol. 72, No. 3 (1978), 539-541
		
		
		
		
		
	
		
		
		

		
		
		

		
	\end{small}
\end{thebibliography}
\end{document}